   \DeclareSymbolFont{symbolsC}{U}{txsyc}{m}{n}
   \DeclareMathSymbol{\strictif}{\mathrel}{symbolsC}{74}
   \DeclareMathSymbol{\boxright}{\mathrel}{symbolsC}{128}
\newtheorem{df}{Definition}[section]  
\newtheorem{teo}[df]{Theorem}
\newtheorem{coro}[df]{Corollary}
\newtheorem{lm}[df]{Lemma}
\newtheorem{exm}[df]{Example}
\newtheorem{prp}[df]{Proposition}
\newenvironment{lateproof}[1] {\paragraph{Proof of {#1}.}}{\hfill$\square$}
\newcolumntype{C}[1]{>{\centering\arraybackslash}p{#1}}
\newcolumntype{L}[1]{>{\arraybackslash}p{#1}}
\newcommand{\dep}[2]{=\hspace{-3pt}({#1};{#2})}
\newcommand{\depc}[1]{=\hspace{-3pt}({#1})}
\newcommand{\con}[1]{=\hspace{-3pt}({#1})}
\newcommand{\ded}{\vdash_\sigma\xspace}
\newcommand{\cf}{\boxright}
\newcommand{\dblsetminus}{\mathbin{{\setminus}\mspace{-5mu}{\setminus}}}
\newcommand{\COV}{\ensuremath{\mathcal{CO}_{\mathsmaller{\dblsetminus\hspace{-0.23ex}/}}[\sigma]}}
\newcommand{\COv}{\ensuremath{\mathcal{CO}_{{\setminus}\mspace{-5mu}{\setminus}\hspace{-0.23ex}/}}}
\newcommand{\COd}{\ensuremath{\mathcal{COD}}}
\newcommand{\COD}{\ensuremath{\mathcal{COD}[\sigma]}}
\newcommand{\CO}{\mathcal{CO}[\sigma]}
\newcommand{\Co}{\mathcal{CO}}
\newcommand{\F}{\mathcal{F}}
\newcommand{\G}{\mathcal{G}}
\newcommand{\K}{\mathcal{K}}
\newcommand{\End}{\mathrm{En}}
\newcommand{\Exo}{\mathrm{Ex}}
\newcommand{\Con}{\mathrm{Cn}}
\newcommand{\Dom}{\mathrm{Dom}}
\newcommand{\Ran}{\mathrm{Ran}}
\newcommand{\ASS}{\ensuremath{\mathbb{A}_\sigma}}
\newcommand{\FUN}{\ensuremath{\mathbb{F}_\sigma}}
\newcommand{\FUNr}{\ensuremath{\mathbb{F}^0_\sigma}}
\newcommand{\SEM}{\ensuremath{\mathbb{S}\mathbbm{em}_\sigma}}
\newcommand{\CT}{\ensuremath{\mathbb{C}_\sigma}}
\newcommand{\SET}[1]{\mathbf{#1}}
\newcommand{\vvee}{\raisebox{1pt}{\ensuremath{\mathop{\,\mathsmaller{\dblsetminus\hspace{-0.23ex}/}\,}}}}
\newcommand{\bigvvee}{\ensuremath{\mathop{\mathlarger{\mathlarger{\mathbin{{\setminus}\mspace{-5mu}{\setminus}}\hspace{-0.33ex}/}}}}\xspace}
\newcommand{\todob}[1]{%}
\textnormal{\color{blue}\scriptsize+++Fausto: #1+++}}
\begin{document}

\begin{frontmatter}
  \title{Counterfactuals and dependencies on causal teams: expressive power and deduction systems}
  \author{Fausto Barbero}\footnote{The author was supported by grant 316460 of  the Academy of Finland.}
  \address{Department of Philosophy, University of Helsinki \\ PL 24 (Unioninkatu 40), 00014  Helsinki, Finland}
  \author{Fan Yang}\footnote{The  author was supported by Research Funds of the University of Helsinki and grant 308712 of the Academy of Finland.}
  \address{Department of Mathematics and Statistics, University of Helsinki\\ PL 68 (Pietari Kalmin katu 5), 00014  Helsinki, Finland}
  
  \begin{abstract}
% We analyze the languages for causal and contingent dependence that were introduced in Barbero and Sandu (2018).
 We analyze the  causal-observational languages that were introduced in Barbero and Sandu (2018), which allow discussing interventionist counterfactuals and functional dependencies in a unified framework.
In particular, we systematically investigate the expressive power of these languages in causal team semantics, and we provide complete natural deduction calculi for each language. 
As an intermediate step towards the completeness, we axiomatize the languages over a generalized version of causal team semantics, 
%In order to overcome some difficulties in the completeness proofs, we introduce a generalization of causal team semantics, 
which turns out to be interesting also in its own right.
%, and for which we also provide complete axiomatizations.  
%We point out several differences between the properties of the two semantics, e.g. concerning the validity of the disjunction property for the so-called Boolean disjunction.
  \end{abstract}

  \begin{keyword}
 Interventionist counterfactuals, causal teams, dependence logic, team semantics.
  \end{keyword}
 \end{frontmatter}

\section{Introduction}

Counterfactual conditionals express the modality of \emph{irreality}: they describe what \emph{would} or \emph{might} be the case in circumstances which diverge from the actual state of affairs. Pinning down the exact meaning and logic of counterfactual statements has been the subject of a large literature (see e.g. \cite{Sta2019}). We are interested here in a special case: the \emph{interventionist} counterfactuals, which emerged from the literature on causal inference (\cite{SpiGlySch1993,Pea2000,Hit2001}). Under this reading, a conditional $\SET X=\SET x\cf\psi$ says that $\psi$ would hold if we were to intervene on the given system, by subtracting the variables $\SET X$ to their current causal mechanisms and forcing them to take the values $\SET x$.% and subtracting them to their current causal mechanisms.%manipulating the values of the variables involved and the causal mechanisms that link the variables.

The \emph{logic} of interventionist counterfactuals has been mainly studied in the semantical context of \emph{deterministic causal models} (\cite{GalPea1998,Hal2000,Bri2012,Zha2013}), which consist of an assignment of values to variables together with a system of \emph{structural equations} that describe the causal connections. 
In \cite{BarSan2018}, causal models were generalized to \emph{causal teams}, in the spirit of \emph{team semantics} (\cite{Hod1997,Vaa2007}), by allowing a \emph{set} of assignments (a ``{\em team}'')  instead of a single assignment. This %kind of move
opens the possibility of describing e.g. \emph{uncertainty}, \emph{observations}, and \emph{dependencies}.% \cite{BarSan2018} considered languages with two conditional operators: the interventionist counterfactual $\cf$ (describing an intervention \emph{on the causal team}), and the   selective implication $\supset$ (which describes the result of an observation). Both conditionals can be seen as modal operators (indexed by the antecedents) over the space of all causal teams. %By contrast, these languages interpret the disjunction $\lor$ non-classically. %If we treat \emph{each causal team} (not the single assignments!) as a possible world, both conditionals can be seen as indexed modal operators. Alternatively, we can see the two conditionals as \emph{dynamic} operators, which modify causal teams %by procedures called interventions, resp. selections, 
 %according to the information contained in the antecedent. \todoy{The last two sentences seem to come out of the blue. Need to be smoothened a bit. Indeed, we need to justify the connection to modal logic.}

%As it is argued in \cite{BarSan2019}, one of the advantages of causal teams with respect to causal models is greater flexibility in modeling scenarios that may e.g. be useful for philosophical discussion. However, 
One of the main reasons for introducing causal teams was the possibility of comparing the logic of dependencies of causal nature (those definable in terms of interventionist counterfactuals) against that of contingent dependencies (such as have been studied in the literature on team semantics, or in database theory) in a unified semantic framework.  \cite{BarSan2018} and \cite{BarSan2019} give anecdotal evidence of the interactions between the two kinds of dependence, but offer no general axiomatizations for languages that also involve contingent dependencies. % result beyond a complete axiomatization of a basic language $\Co$, which slightly generalizes the usual languages of causal inference, but has no contingent dependence operators. 
In this paper we fill this gap in the literature by providing complete deduction systems (in natural deduction style) for the languages $\COd$ and $\COv$ (from \cite{BarSan2018}), which 
%allow, respectively,
enrich the basic counterfactual language, respectively, with atoms of functional dependence $\dep{\SET X}{Y}$ (``$Y$ is functionally determined by $\SET X$''), or with the intuitionistic disjunction $\vvee$, in terms of which functional dependence is definable.  We also give semantical characterizations, for $\COd$, $\COv$ and the  basic counterfactual $\Co$, in terms of definability of classes of causal teams.

The strategy of the completeness proofs is the following. We introduce a \emph{generalized} causal team semantics, which encodes uncertainty over causal models, not only over assignments. (This semantics is used as a tool towards completeness, but also has independent interest.) We then give completeness results for this semantics, using techniques developed in \cite{YanVaa2016,Cia2016b}. Finally,  we extend the calculi to completeness over causal teams by adding axioms which %\todoy{I removed ``roughly speaking"}%, roughly speaking, 
capture the property of being a causal team (i.e. encoding \emph{certainty} about the causal connections). %\todob{Shortened/rearranged paragraph}

%The completeness results are obtained by adapting to our context techniques that were developed for team semantics (see \cite{Cia2016b} and \cite{YanVaa2016}). However, applying directly the said techniques to causal team semantics turns out to be difficult, if not impossible, because of technical obstacles; \todoy{again, be more positive} the main problems being given by the fact that unions of causal teams are not in general defined (causing e.g. the failure of the disjunction property for $\vvee$).  We were thus led to introduce a \emph{generalized} causal team semantics, which encodes uncertainty over causal models, not only over assignments, and for which these impediments do not arise. We will first produce completeness results for the generalized semantics, using standard techniques; and then extend the calculi to completeness over causal teams by adding axioms which \todoy{I removed ``roughly speaking"}%, roughly speaking, 
%capture the property of being a causal team (i.e. having \emph{certainty} about the causal connections). 

%The plan of the paper is as follows. %\todob{Or should we have a more detailed index?} 
The paper is organized as follows.
Section \ref{SECSYNSEM} introduces the formal languages and two kinds of semantics. Section \ref{SECCHARFUN} deepens the discussion of the functions which describe causal mechanisms, addressing issues of definability and the treatment of dummy arguments. %., compares them and discusses the issue of dummy arguments; in the last subsection, the semantics is used to prove the undefinability of the counterfactual conditional in terms of the other logical operators. reviews briefly the basics of causal team semantics and introduces the formal languages. Section \ref{SECCHAR} presents six characterizations of expressive power (for languages $\Co$,  $\COd$ and $\COv$, over both semantics). 
Section \ref{SECCODED} characterizes semantically the language $\Co$ and reformulates in natural deduction form the $\Co$ calculi that come from \cite{BarSan2019}.  Section % Section \ref{SECDISJ} shows how the two semantics differ for what concerns the disjunction property (of $\vvee$). Finally, section
\ref{SECCOMPLETE} gives semantical characterizations for $\COd$ and $\COv$, and complete natural deduction calculi for both kinds of semantics. %The Appendix contains the proofs that have been omitted from the main text.%We include all missing proofs from the sections in the Appendix. \todoy{added}

% 2 SECLANG    Formal languages
% 3 SECSEMA    Semantics
     % 3.1 SUBSCT       Causal teams
		 % 3.2 SUBSGCT      Generalized causal teams
		 % 3.3 SUBSCPSEM    Comparison of the two semantics
     % 3.4 SUBSEQUIV    Equivalence up to dummy arguments
		 % 3.5 SUBSUNDEFCF  Undefinability of cf

% 4 SECCHAR     Characterizations of expressive power
     % 4.1  SUBSRELPROP  Relevant properties for definabilility
		 % 4.2  SUBSSPECIAL  Special classes of formulas 
		 % 4.3  SUBSEXPRCOGCT   Expressive power of CO on gct
		 % 4.4  SUBSEXPRCOCT   Expressive power of CO on ct
		 % 4.5  SUBSEXPRCOCT   Expressive power of COU on ct
		 % 4.6  SUBSEXPRCOCT   Expressive power of COU on gct
		 % 4.7  SUBSEXPRCOCT   Expressive power of COD on gct
		 % 4.8  SUBSEXPRCOCT   Expressive power of COD on ct
		
% 5 SECCODED    Natural deduction for CO
% 6 SECDISJ	    Disjunction property
% 7 SECCOMPLETE Completeness results
     % 7. SUBSCOUGEN Completeness COU, gct
		 % 7. SUBSCOUAX Completeness COU, gct
		 % 7. SUBSCODGEN Completeness COU, gct
		 % 7. SUBSCODAX Completeness COU, gct

%\vspace{-7pt}

\section{Syntax and semantics} \label{SECSYNSEM}

\subsection{Formal languages}\label{SECLANG}

Let us start by fixing the syntax. %We  consider three classes of languages for causal and contingent dependence, %which were introduced in \cite{BarSan2018}\footnote{We made slight changes to the syntax of the languages for what concerns the treatment of negation.};
%Their semantics will be introduced (in two variants) in the following section. %\todob{Let's also see whether to take $\supset$ as defined}.}
 Each of the languages considered in this paper is parametrized by a (finite) \textbf{signature} $\sigma$, i.e. 
%\begin{df}
 %A \textbf{signature} $\sigma$ is
 a pair $(\Dom,\Ran)$, where $\Dom$ is a nonempty finite set of \textbf{variables}, and $\Ran$ is a function that associates to each variable $X\in \Dom$ a nonempty finite set $\Ran(X)$ (called the \textbf{range} of $X$) of \textbf{constant symbols} or \textbf{values}.\footnotemark\footnotetext{Note that we do not encode a distinction between exogenous and endogenous variables into the signatures, as done in \cite{Hal2000}. Instead, we follow the style of Briggs \cite{Bri2012}. Doing so will result in more general completeness results. %\todoy{rephrased a little, will phrase it better if I could understand what the last sentence actually means...}  %We point out that we are using signatures in the style of Briggs \cite{Bri2012}, and not those from \cite{Hal2000}, which also encode a distinction between exogenous and endogenous variables. Following Briggs's choice, completeness results are more general.
 }
%\end{df}
%
%We will abide to the following conventions: we use (with some exceptions) boldface letters $\SET X$, resp. $\SET x$, to denote sets of variables, resp. sets of values. $Ran(\SET X)$ will denote $\Pi_{X\in \SET X} Ran(X)$. \todoy{well, then strictly speaking we cannot write $\SET x\in Ran(\SET X)$. Perhaps $\SET X$ and $\SET x$ should denote a \emph{sequence} of variables/values?} An expression of the form $\SET X = \SET x$ will be used as an abbreviation for a conjunction of the form $X_1=x_1 \land \dots \land X_n=x_n$. 
%
We reserve the Greek letter $\sigma$ for signatures. We use a boldface capital letter $\SET X$ to stand for a sequence $\langle X_1,\dots,X_n\rangle$ of variables; similarly a boldface lower case letter $\SET x$ stands for a sequence $\langle x_1,\dots,x_n\rangle$ of values. We will sometimes abuse notation and treat $\SET X$ and $\SET x$ as sets.

An atomic $\sigma$-formula is an equation $X=x$, where $X\in \Dom$ and $x\in \Ran(X)$. The conjunction $X_1=x_1 \land \dots \land X_n=x_n$ of equations is abbreviated as $\SET X = \SET x$, %\todoy{here $\wedge$ is used before its introduction, but I don't know how to treat it properly without taking too much space...} 
and also called an equation.  %\todoy{I now think it's easier if we treat $\SET X$ as a sequence, instead of an unordered set.}\todoy{But we sometimes abuse notations and treat it as a set as well...}
%Formulas of the form $\dep{\SET X}{Y}$ will be called \emph{dependence atoms}. The mos
%The simplest languages we consider are the $\CO$ languages, whose formulas are given by:
Compound formulas of the basic language $\CO$ are formed by the grammar:
\begin{center}$\alpha::= X=x \mid   \neg \alpha   \mid  \alpha\land\alpha   \mid  \alpha\lor\alpha   \mid  %\alpha\supset \alpha \mid
 \SET X=\SET x\cf\alpha$\end{center}
%where $\SET X \cup \{X\}\subseteq \Dom$, $x\in \Ran(X)$, $\SET x \in \Ran(\SET X)$. %The connective $\lor$ is called \emph{tensor disjunction}, or simply tensor. \todoy{I would think that we don't need to mention the name for $\lor$: the reader would read it as a disjunction in the usual classical logic, giving it a name may make things a bit more complicated} 
The connective $\cf$ is used to form \emph{interventionist counterfactuals}. % a special kind of conditional formula \todoy{this sentence is vague}. 
%\todoy{$\SET X=\SET x$ is short for $\bigwedge_i X_i=x_i$}
%\todoy{Are there relation symbols, or function symbols in the signature? Or, do we also have $R\vec{t}$ as an atomic formula?} NO,NO.
We abbreviate $\neg(X=x)$ as $X\neq x$, and $X=x\wedge X\neq x$ as $\bot$. Throughout the paper, we %will %usually 
reserve the first letters of the Greek alphabet, $\alpha,\beta,\dots$ for $\CO$ formulas. 

We consider also two extensions of $\CO$, obtained by adding the  {\em intuitionistic disjunction} $\vvee$, %another %Boolean 
%disjunction $\vvee$, called  {\em intuitionistic disjunction}, %\todob{name?}%\todoy{do we really have to name it?}\todob{Of course! At least some of the readers will know what we are talking about already from the introduction}
 or dependence atoms $\dep{\SET X}{Y}$: %or both:
\begin{itemize}
\item $ \COV:\, \varphi::= X=x \mid   \neg \alpha   \mid  \varphi\land\varphi   \mid  \varphi\lor\varphi \mid \varphi\vvee\varphi  \mid % \alpha\supset \psi \mid 
\SET X=\SET x\cf\varphi $

\item $\COD: \,\varphi::=X=x \mid \ \dep{\SET X}{Y} \mid  \neg \alpha   \mid  \varphi\land\varphi   \mid  \varphi\lor\varphi   \mid  
%\alpha\supset \psi \mid
 \SET X=\SET x\cf\varphi$

%\item $\COd_{\vveescript}(\sigma): Y=y \mid \   \dep{\SET X}{Y} \mid  \neg \alpha   \mid  \psi\land\psi   \mid  \psi\lor\psi   \mid 
%% \alpha\supset \psi \mid
% \SET X=\SET x\cf\varphi \mid \psi\vvee\psi$
\end{itemize}
%where %$\SET X \cup \{X,Y\}\subseteq \Dom$, $x\in \Ran(X)$, $\SET x \in \Ran(\SET X)$,  
%$\alpha$ is a $\CO$-formula. %\todoy{Again, in $\dep{\SET X}{Y}$, what is $\SET X$? Is it a set, or a sequence?}\todob{It seems to me that, in the literature, $\SET X$ is used ambiguously to denote either sets or sequences, depending on the context.}

%We will %usually 
%reserve the first letters of the Greek alphabet, $\alpha,\beta,\dots$ for $\CO$ formulas. We abbreviate $\neg(X=x)$ as $X\neq x$, and $\dep{}{Y}$ will be written as $\con{Y}$ and called a {\em constancy atom}.
%
%We  also consider another (defined) conditional operator $\supset$, called \emph{selective implication}. a conditional of the form $\alpha \supset \varphi$ (where the antecedent $\alpha$ is restricted to be a $\CO$ formula) will stand, in the present paper, as an abbreviation for $\neg\alpha \lor \varphi$.\footnote{In \cite{BarSan2018} and \cite{BarSan2019}, $Y\neq y$ is taken as primitive, and as a consequence $\neg$ is (recursively) definable. Furthermore,  $\alpha \supset \varphi$ was taken as primitive.}

%%%%%%%%%%%%%%%%%%%%%%%%%%%%%%%%%%%%%%%
%\section{Semantics} \label{SECSEMA}

\subsection{Causal teams} \label{SUBSCT}

We now define the team semantics of our logics over causal teams. We first recall the definition of causal teams adapted from \cite{BarSan2019}.

 %was introduced in \cite{BarSan2019}. Our presentation though is slightly different from \cite{BarSan2019}.

%We adopt a slight variant of the definition of causal team that was used in \cite{BarSan2019}. \todoy{postpone this comment}

%\todoy{I put some definitions just in plain text (without using the theorem style). The theorem style takes too much space, and too many theorem style definition would easily let the reader to miss the main points}

%\begin{df}
Fix a signature $\sigma=(\Dom,\Ran)$. An \textbf{assignment} over  $\sigma$ %(or $\sigma$-assignment) 
is a mapping
$s:\Dom\rightarrow\bigcup_{X\in \Dom}\Ran(X)$ such that $s(X)\in \Ran(X)$
for each $X\in \Dom$.\footnote{We identify syntactical variables and values with their semantical counterpart, following the conventions in most literature on interventionist counterfactuals, e.g. \cite{GalPea1998,Hal2000,Bri2012,Zha2013}. In this convention distinct symbols (e.g., $x,x'$) denote distinct objects. 
} Denote by $\ASS$ the set of all assignments over $\sigma$. %\todoy{$\mathbb{A}\mathbbm{s}_\sigma$ was ugly, is \ASS prettier? Or neither looks nice...} 
%\todob{What about this?} 
A \textbf{team} $T$ over $\sigma$ %(or $\sigma$-team) 
is a set of assignments over $\sigma$, i.e., $T\subseteq \ASS$.%assignments of signature $\sigma$.
%\end{df}

%\begin{df}
%A \textbf{system of functions} over a signature $\sigma=(\Dom,\Ran)$ is a pair $\mathcal F=(\End(\F),F)$ consisting of
%\begin{itemize}
%\item a set $\End(\F)\subseteq \Dom$ of  \textbf{endogenous variables},
%\item and a function $F$ that assigns to each endogenous variable $V\in \End(\F)$ a set $PA^{\F}_V\subseteq \Dom\setminus\{V\}$ of  \textbf{parents} of $V$, and a function  $\F_V: \Ran(PA^{\F}_V)\rightarrow \Ran(V)$. \todoy{introduce the notation $\Ran(A)$ for some set $A$}
%\end{itemize}
%\todob{I really dislike the fact that $En(\F)$ is part of the system of functions, and not just a parameter. It makes the object unneededly complicated, and also we waste some lines.}
%\noindent Variables in the set $\Exo(\F)=\Dom\setminus \End(\F)$ are called \textbf{exogenous variables} of $\F$. 
%\end{df}

A \textbf{system of functions} $\F$ over $\sigma$ %a signature $\sigma=(\Dom,\Ran)$
is a function  that assigns to each variable $V$ in a domain $\End(\F)\subseteq \Dom$ a set $PA^{\F}_V\subseteq \Dom\setminus\{V\}$ of  \textbf{parents} of $V$, and a function  $\F_V: \Ran(PA^{\F}_V)\rightarrow \Ran(V)$.\footnote{We  identify the set $PA^{\F}_V$ with a sequence, in a fixed lexicographical ordering.} Variables in the set $\End(\F)$ are called \textbf{endogenous variables}  of $\F$, and variables in $\Exo(\F)=\Dom\setminus \End(\F)$ are called \textbf{exogenous variables} of $\F$. %\todoy{new version}%\todoy{Is this good?}\todoy{But then this parameter $\End(\F)$ has to be included in the next definition, which is something I didn't like... Because in the generalized causal teams we don't really want to specify this set $\End(\F)$ for each pair $(s,\F)$, I suppose. Don't know what to do...}

Denote by $\FUN$ the set of all systems of functions over $\sigma$, which is clearly finite. We say that an assignment $s\in \ASS$ is \textbf{compatible} with a system  of functions $\mathcal F\in \FUN$ if for all endogenous variables $V\in \End(\F)$, %\todoy{the notation $s(PA_V^{\F})$ was not really introduced, especially do we assume an ordering for the variables in the set $PA_V^{\F}$? Even $\Ran(PA^{\F}_V)$ above was not introduced...}\todob{Added a footnote above.}
%\begin{equation*}%\label{compatibility_df}
$s(V)=\F_V(s(PA_V^{\F}))$.
%\end{equation*}

%\begin{df}

%A \textbf{function component} $\mathcal F$ of $\sigma$ with \textbf{endogenous} variables $End(\F)\subseteq Dom$ is a function $\{(V,f_{V}) \mid V\in End(T)\}$ which assigns to each variable $V\in End(\F)$ a function  $f_V: Ran(PA^{\F}_V)\rightarrow Ran(V)$ (for some $PA^{\F}_V\subseteq Dom\setminus\{V\}$). The variables in $Dom\setminus End(\F)$ are called \textbf{exogenous} variables.

%As usual, one can associate a graph $G_{\mathcal F}=(Dom, E_{\mathcal F})$ to $\mathcal F$ by the condition that $(X,Y)\in E_{\mathcal F}$ iff $X\in PA_Y$.

%We say that a function component $\mathcal F$ is \textbf{recursive} if $G_{\mathcal F}$ is acyclic. 
%\end{df}

\begin{df}
A \textbf{causal team} over a signature $\sigma$  is a pair $T = (T^-,\F)$ consisting of \begin{itemize}
\item a team $T^-$ over $\sigma$, called the \textbf{team component} of $T$,
%\item $G_T =(Dom,E_T)$ is a graph over the set of variables. We require the graph to be \emph{areflexive}, i.e. not to have arrows of the form $(X,X)$.
\item and a system of functions $\F$ over $\sigma$, called the \textbf{function component} of $T$,
\end{itemize}
where all assignments $s\in T^-$ are compatible with the function component $\F$.
\end{df}

%Observe that the definition of the function component $\mathcal{F}$ of a causal team $T=(T^-,\mathcal F)$ does not make any reference to the team component $T^-$. The function component $\mathcal F$ is only parameterized by the signature $\sigma= (Dom, Ran)$ and the endogenous variables $End(T)$. We will thus sometimes consider function components $\mathcal F$ in their own right.
%Denote by $\FUN$  the set of all function components of the signature $\sigma$.\todoy{Can we use a lighter notation: $\mathsf{FUN}_\sigma$, or $\mathsf{Fun}_\sigma$, or $\mathbb{F}_\sigma$}

Any system $\F\in \FUN$  of functions can be naturally associated with a (directed) graph $G_{\F}=(\Dom,E_{\F})$, defined as 
$(X,Y)\in E_\F$ iff $X\in PA_Y^\F.$
We say that  $\F$ is \textbf{recursive} if $G_\F$ is acyclic, i.e., for all $n\geq 0$, $E_{\F}$ has no subset of the form $\{(X_0,X_1),(X_1,X_2),\dots, (X_{n-1},X_n), (X_n,X_0)\}$. The graph of a causal team $T$, denoted as $G_T$,  is the associated graph of its function component. We call $T$ \textbf{recursive} if  $G_{T}$ is acyclic. Throughout this paper, for simplicity we assume that all causal teams that we consider are recursive.
% In the present paper, we will be  concerned only with the recursive causal teams. 

%\end{df}

Intuitively, a causal team $T$  may be seen as representing an assumption concerning the causal relationships among the variables in $\Dom$ (as encoded in $\F$) together with a range of hypotheses concerning the actual state of the system (as encoded in $T^-$). We now illustrate this idea in the following example.

\begin{exm}\label{EXCT}
The following diagram illustrates a causal team $T=(T^-,\F)$. %over a signature $\sigma=(\Dom,\Ran)$ with $\Dom=\{U,X,Y,Z\}$:
\begin{center}
$T^-$: \begin{tabular}{|c|c|c|c|}
\hline
 \multicolumn{4}{|l|}{ } \\[-6pt]
 \multicolumn{4}{|l|}{%S\tikzmark{U8T} \ \,  \ \tikzmark{X8T}
U\tikzmark{FROMU}  \, \tikzmark{TOX}X\tikzmark{FROMX} \,  \tikzmark{TOY}Y\tikzmark{FROMY}   \, \   \tikzmark{TOZ}Z} \\
\hline
 $0$ & $0$ & $1$ & $2$ \\
\hline
 $1$ & $1$ & $2$ & $6$ \\
\hline
% 1 & 3 & 3 & 1\\
%\hline
% 1 & 4 & 1 & 1\\
%\hline
%8 & 0 & u & 0 & \small $OFF$\\ 
%\hline
\end{tabular}
 \begin{tikzpicture}[overlay, remember picture, yshift=.25\baselineskip, shorten >=.5pt, shorten <=.5pt]
  %\draw ([yshift=7pt]{pic cs:FROMU})  edge[line width=0.2mm, out=15,in=140,->] ([yshift=7pt]{pic cs:TOY});
	\draw [->] ([yshift=3pt]{pic cs:FROMU})  [line width=0.2mm] to ([yshift=3pt]{pic cs:TOX});
	\draw [->] ([yshift=3pt]{pic cs:FROMX})  [line width=0.2mm] to ([yshift=3pt]{pic cs:TOY});
	\draw [->] ([yshift=3pt]{pic cs:FROMY})  [line width=0.2mm] to ([yshift=3pt]{pic cs:TOZ});
  
	\draw ([yshift=7pt]{pic cs:FROMU})  edge[line width=0.2mm, out=30,in=160,->] ([yshift=8pt]{pic cs:TOZ});
	\draw ([yshift=8pt]{pic cs:FROMX})  edge[line width=0.2mm, out=20,in=160,->] ([yshift=5pt]{pic cs:TOZ});
    %\draw [->] ({pic cs:a}) to ({pic cs:b});
   %\draw [->] ([yshift=.100pt]{pic cs:x}) -- ({pic cs:z});
  \end{tikzpicture}
	\hspace{30pt} 
	$\left\{
	\begin{array}{lcl}
	\F_X(U) & = & U \\
	\F_Y(X) & = & X+1 \\
	\F_Z(X,Y,U) & = & 2*Y+X+U \\
	\end{array}
	\right.$
\end{center}
The table on the left represents a team $T^-$ consisting of two assignments, each of which is tabulated in the obvious way as a row in the table. For instance, the assignment $s$ of the first row is defined as $s(U)=s(X)=0$, $s(Y)=1$ and $s(Z)=2$. The arrows in the upper part of the table represent the graph $G_{T}$ of the causal team $T$. %\todob{Should we introduce this notation in the main text?}\todoy{introduced in a way} 
 For instance, the arrow from $U$ to $Z$ represents the edge $(U,Z)$ in  $G_{T}$. The graph contains no cycles, thus the causal team $T$ is recursive. The variable $U$ with no incoming arrows is an exogenous variable.
 The other variables are endogenous variables, namely, $\End(\F)= \{X,Y,Z\}$. The function component is determined by the system of functions on the right of the above diagram. Each equation defines the ``law'' that generates the values of an endogenous variable. %Note that the above diagram does not uniquely determine the signature $\sigma$ of $T$, as the ranges of the variables are not completely reflected in the diagram. Nevertheless, we can read from the diagram that, e.g., $\{0,1\}\subseteq \Ran(U),\Ran(X)$.

%\noindent It depicts, almost completely, a causal team $T=(T^-,\F)$ of endogenous variables $\End(\F)= \{U,X,Z\}$, in the following sense. The system on the right describes the function component $\F_T$; each equation describes the ``law'' which generates the values of a given endogenous variable. The table on the left describes the team component $T^-$: each row describes an assignment $s\in T^-$. The arrows in the upper part of the table represent the graph $G_T$. Notice that no arrow of the graph enters into $U$, since $U$ is (the unique) exogenous variable of $\Dom = \{U,X,Y,Z\}$. Finally, notice that the picture does not uniquely determine the signature of $T$, as it does completely determine the ranges of variables; but from it we can infer, for example,  that $\Ran(U)\supseteq \{0,1\}\subseteq \Ran(X)$. \todoy{What's the point of this last sentence?}
%%, Ran(Y)\supseteq \{1,2\}$ and $Ran(Z)\supseteq \{2,5\}$.

%As shown in this example, the causal team $T$ may be seen as representing an assumption concerning the causal relationships among the four variables $U,X,Y,Z$ (as encoded in $\F_T$) together with a range of hypotheses concerning the actual state of the system (as encoded in $T^-$).
\end{exm}

%The following constructions are needed for the definition of the semantics of formulas over causal teams.

%\begin{df}
Let $S=(S^-,\F)$ and $T=(T^-,\G)$ be causal teams over the same signature. We call $S$ a \textbf{causal subteam} of $T$, denoted as $S\subseteq T$, if $S^-\subseteq T^-$ %2) $G_S = G_T$, 
and  $\F = \G$.%\todob{We don't need to discuss unions here. Also, I am eliminating the notation $\F_T$.}
%If $\F_S = \F_T$, the \textbf{union} of $S$ and $T$, denoted as $S\cup T$, is defined as $S\cup T=(S^-\cup T^-,\F_S)$.\todob{Ahi, using notation $\F_T$, which I was removing. Also, we don't need to talk of unions here.} In case $\F_S\neq \F_T$, the union of $S$ and $T$ is not in general defined, as there is  no canonical way to combine the information encoded in two different function components. We will, nevertheless, discuss in Definition \ref{union_ct_df} some special cases in which the union is definable for non-identical function components. \todoy{New sentences}

%When the causal team $T$ under discussion is clear from the context, we abuse notation and (for any $s\in T^-$) we denote as $\{s\}$ the causal subteam $S$ of $T$ such that $S^-=\{s\}$.
%\end{df} 

An equation  $\SET X = \SET x$ is said to be \textbf{inconsistent} if it contains two conjuncts  $X=x$ and $X=x'$ with distinct values $x,x'$; otherwise it is said to be \textbf{consistent}. 
%\begin{df} \label{DEFCOMP}
%An assignment $s$ is said to be \textbf{compatible} with a function component $\mathcal F$ (of the same signature) if for all $Y\in End(\F)$, $s(Y)= \F(Y)(s(PA_Y^{\F}))$.
%\end{df}

\begin{df}[Intervention]\label{intervention_ct_df}
Let $T=(T^-,\mathcal F)$ be a causal team over some signature $\sigma=(\Dom,\Ran)$. Let $\SET X=\SET x\,(=X_1=x_1\wedge\dots\wedge X_n=x_n)$ be a consistent equation over $\sigma$. The \textbf{intervention} $do(\SET X= \SET x)$ on $T$ is the procedure that generates a new causal team $T_{\SET X = \SET x}=(T_{\SET X = \SET x}^-,\mathcal F_{\SET X=\SET x})$ over $\sigma$ defined as follows:
\begin{itemize}
%\item $G_{T_{\SET X=\SET x}} = (Dom,E_{T_{\SET X=\SET x}})$ , where $(Y,Z)\in E_{\SET X=\SET x}$ iff $(Y,Z)\in E$ and $Z\notin \SET X$.
\item $\F_{{\SET X = \SET x}}$ is the restriction of $\F$ to $\End(\F)\setminus \SET X$,

\item $T_{\SET X=\SET x}^-=\{s_{\SET X=\SET x}\mid s\in T^-\}$, where each $s_{\SET X=\SET x}$ is an assignment compatible with $\mathcal F_{{\SET X = \SET x}}$ defined (recursively) as %\todoy{I wrote ``recursively" because the definition below is kind of circular, it is actually a recursive definition}\todoy{this definition perhaps works for recursive teams only}
\begin{center}\(s_{\SET X=\SET x}(V)=\begin{cases}
x_i&\text{ if }V=X_i,\\
s(V)&\text{ if }V\notin \End(T) \cup \SET X,\\
\F_V(s_{\SET X=\SET x}(PA_V^{\mathcal F}))&\text{ if }V\in \End(T)\setminus \SET X
\end{cases}\)\end{center}
%\todob{I made a small change to the second clause, to avoid overlap with the first clause.}
\end{itemize}
%
%Fix a signature $\sigma=(Dom,Ran)$. Let $\SET X\subseteq Dom, \SET x\in Ran(\SET X)$. 
%Given a consistent conjunction $\SET X=\SET x$, the \textbf{intervention} $do(\SET X= \SET x)$ on a causal team $T$ of signature $\sigma$ is the procedure which determines a new causal team $T_{\SET X = \SET x}$ (of the same signature) as follows.
%
%Let $\SET U$ be the set of exogenous variables of $T$. Let % $\SET W = \SET U \cup \SET X$ and 
%$\SET U' = \SET U \setminus \SET X$. 
%%(so that $\SET W$ is the \emph{disjoint} union of $\SET U'$ and $\SET X$). %An intervention $do(\SET X= \SET x)$ is supposed to produce a new causal team of exogenous variables $\SET W$. The most natural way to obtain it seems to be to include in the intervened team all the 
%$T_{\SET X=\SET x}$ is then the causal team with the following components:
%\begin{itemize}
%%\item $G_{T_{\SET X=\SET x}} = (Dom,E_{T_{\SET X=\SET x}})$ , where $(Y,Z)\in E_{\SET X=\SET x}$ iff $(Y,Z)\in E$ and $Z\notin \SET X$.
%\item $\mathcal F_{T_{\SET X = \SET x}}$ is the restriction of $\mathcal F$ to $End(T)\setminus \SET X$.
%
%\item $s\in(T_{\SET X=\SET x})^-$ iff $s$ is compatible with $\mathcal F_{T_{\SET X = \SET x}}$, $s(\SET X) = \SET x$ and there is a $t\in T^-$ such that $t(\SET U') = s(\SET U')$.
%\end{itemize}
%%assignments $s$ that are ``solutions'' to the associated system of equations and which respect the constraint that $s(\SET U'\SET X) = \SET u'\SET x$ for some tuple $\SET u'$ of $\SET U'$-values occurring in $T$. 
\end{df}

%Clearly, the graph $G_{T_{\SET X=\SET x}}$ of the team $T_{\SET X=\SET x}$ after the intervention is a subgraph of $G_T$ with all edges of the form $(Y,Z)$ with $Z\in \SET X$ removed.

%an intervention $do(\SET X= \SET x)$ on $T$ removes from  $G_T$ all edges of the form $(Y,Z)$ with $Z\in \SET X$. The graph $G_{T_{\SET X=\SET x}}$ of the team $T_{\SET X=\SET x}$ after the intervention is thus a subgraph of $G_T$.

%Clearly, the associated graph $G_{T_{\SET X=\SET x}} = (\Dom,E_{{\SET X=\SET x}})$ of the team $T_{\SET X=\SET x}$ after the intervention on $T$ is a subgraph of the associated graph $G_T=(\Dom, E)$ of the original team $T$, because we have that $(Y,Z)\in E_{{\SET X=\SET x}}$ iff $(Y,Z)\in E$ and $Z\notin \SET X$

%Notice that the graph of the team $T_{\SET X=\SET x}$ may be smaller than the initial graph: $G_{T_{\SET X=\SET x}} = (\Dom,E_{T_{\SET X=\SET x}})$ , where $(Y,Z)\in E_{T_{\SET X=\SET x}}$ iff $(Y,Z)\in E$ and $Z\notin \SET X$.

%In the case of the recursive causal teams, an intervention can be seen as a rewriting procedure, as illustrated in the following example.\todoy{Maybe we give the above definition only for recursive teams}

\begin{exm} \label{EXCTINT}
Recall the recursive causal team $T$ in Example \ref{EXCT}. %Suppose  $\Ran(U)=\Ran(X)= \{0,1\}$, $\Ran(Y)=\{1,2\}$ and $\Ran(Z)=\{1,2,3,4,5,6\}$. 
By applying the intervention $do(X=1)$ to $T$, we obtain a new causal team $T_{X=1}=(T_{X=1}^-,\F_{X=1})$  as follows. The function component $\F_{X = 1}$ is determined by the %system of 
equations:
\begin{center}
%$\F_{T_{X = 1}}$: 
$\left\{
	\begin{array}{lcl}
	%\F_T(X) & = & U \\
		(\F_{X=1})_Y(X) & = & X+1 \\
	(\F_{X=1})_Z(X,Y) & = & 2*Y+X+U \\
	\end{array}
	\right.$
\end{center}
The endogenous variable $X$ of the original team $T$ becomes  exogenous in the new team  $T_{X=1}$, and the equation $\F_X(U) = U$ for $X$ is now removed.
%\noindent in which the equation for  $X$ has been removed (now $X$ is exogenous). 
The new team component $T_{X=1}^-$ is obtained by the rewriting procedure illustrated below:
\begin{center}
 \begin{tabular}{|c|c|c|c|}
\hline
 \multicolumn{4}{|l|}{ } \\[-4pt]
 \multicolumn{4}{|l|}{%S\tikzmark{U8T} \ \,  \ \tikzmark{X8T}
 \ U\tikzmark{FROMU1}  \, \ \ \tikzmark{TOX1}X\tikzmark{FROMX1} \, \ \ \tikzmark{TOY1}Y\tikzmark{FROMY1}   \, \ \ \  \tikzmark{TOZ1}Z} \\
\hline
 \ $0$ \ & \ $\mathbf{1}$ \ & ... & ... \\
\hline
 $1$ & $\mathbf{1}$ & ... & ... \\
\hline
\end{tabular}
 \begin{tikzpicture}[overlay, remember picture, yshift=.25\baselineskip, shorten >=.5pt, shorten <=.5pt]
 
	%\draw [->] ([yshift=3pt]{pic cs:FROMU1})  [line width=0.2mm] to ([yshift=3pt]{pic cs:TOX1});
	\draw [->] ([yshift=3pt]{pic cs:FROMX1})  [line width=0.2mm] to ([yshift=3pt]{pic cs:TOY1});
	\draw [->] ([yshift=3pt]{pic cs:FROMY1})  [line width=0.2mm] to ([yshift=3pt]{pic cs:TOZ1});

		\draw ([yshift=7pt]{pic cs:FROMU1})  edge[line width=0.2mm, out=20,in=160,->] ([yshift=9pt]{pic cs:TOZ1});
	\draw ([yshift=8pt]{pic cs:FROMX1})  edge[line width=0.2mm, out=20,in=165,->] ([yshift=6pt]{pic cs:TOZ1});
  \end{tikzpicture}
	\hspace{3pt} 
	$\rightsquigarrow$
	\hspace{3pt}
\begin{tabular}{|c|c|c|c|}
\hline
 \multicolumn{4}{|l|}{ } \\[-4pt]
 \multicolumn{4}{|l|}{%S\tikzmark{U8T} \ \,  \ \tikzmark{X8T}
 \ U\tikzmark{FROMU2}  \, \ \ \tikzmark{TOX2}X\tikzmark{FROMX2} \, \ \ \tikzmark{TOY2}Y\tikzmark{FROMY2}   \, \ \ \  \tikzmark{TOZ2}Z} \\
\hline
 \ $0$ \ & \ $1$ \ & \ $\mathbf{2}$ \ & ... \\
\hline
 $1$ & $1$ & $\mathbf{2}$ & ... \\
\hline
\end{tabular}
 \begin{tikzpicture}[overlay, remember picture, yshift=.25\baselineskip, shorten >=.5pt, shorten <=.5pt]
 
	%\draw [->] ([yshift=3pt]{pic cs:FROMU1})  [line width=0.2mm] to ([yshift=3pt]{pic cs:TOX1});
	\draw [->] ([yshift=3pt]{pic cs:FROMX2})  [line width=0.2mm] to ([yshift=3pt]{pic cs:TOY2});
	\draw [->] ([yshift=3pt]{pic cs:FROMY2})  [line width=0.2mm] to ([yshift=3pt]{pic cs:TOZ2});

	\draw ([yshift=7pt]{pic cs:FROMU2})  edge[line width=0.2mm, out=20,in=160,->] ([yshift=9pt]{pic cs:TOZ2});
	\draw ([yshift=8pt]{pic cs:FROMX2})  edge[line width=0.2mm, out=20,in=165,->] ([yshift=6pt]{pic cs:TOZ2});

  \end{tikzpicture}
	\hspace{3pt} 
	$\rightsquigarrow$
	\hspace{3pt}
\begin{tabular}{|c|c|c|c|}
\hline
 \multicolumn{4}{|l|}{ } \\[-4pt]
 \multicolumn{4}{|l|}{%S\tikzmark{U8T} \ \,  \ \tikzmark{X8T}
  U\tikzmark{FROMU3}  \,   \tikzmark{TOX3}X\tikzmark{FROMX3} \,   \tikzmark{TOY3}Y\tikzmark{FROMY3}   \,  \   \tikzmark{TOZ3}Z} \\
\hline
  $0$  &  $1$  &  $2$ & $\mathbf{5}$ \\
\hline
 $1$ & $1$ & $2$ & $\mathbf{6}$ \\
\hline
\end{tabular}
 \begin{tikzpicture}[overlay, remember picture, yshift=.25\baselineskip, shorten >=.5pt, shorten <=.5pt]
 
	%\draw [->] ([yshift=3pt]{pic cs:FROMU1})  [line width=0.2mm] to ([yshift=3pt]{pic cs:TOX1});
	\draw [->] ([yshift=3pt]{pic cs:FROMX3})  [line width=0.2mm] to ([yshift=3pt]{pic cs:TOY3});
	\draw [->] ([yshift=3pt]{pic cs:FROMY3})  [line width=0.2mm] to ([yshift=3pt]{pic cs:TOZ3});

	\draw ([yshift=7pt]{pic cs:FROMU3})  edge[line width=0.2mm, out=25,in=160,->] ([yshift=9pt]{pic cs:TOZ3});
	\draw ([yshift=8pt]{pic cs:FROMX3})  edge[line width=0.2mm, out=20,in=160,->] ([yshift=6pt]{pic cs:TOZ3});

  \end{tikzpicture}
\end{center}
In the first step,  rewrite the $X$-column with value $1$. Then, update (recursively) the other columns using the functions from $\F_{{X = 1}}$. In this step, only the columns that correspond to ``descendants'' of $X$ will be modified, and the order in which these columns should be updated is completely determined by the (acyclic) graph $G_{T_{X=1}}$ of $T_{X=1}$. Since the variable $X$ becomes exogenous after the intervention, all arrows pointing to $X$ have to be removed, e.g., the arrow from $U$ to $X$. %Thus, the arrow from $U$ to $X$ is removed in the new causal team $T_{X=1}$. 
We refer the reader to \cite{BarSan2019} for more details and justification for this rewriting procedure.

%We point out that only the columns corresponding to ``descendants'' of $X$ (according to the graph $G_{T_{X=1}}$) need to be modified, and that the (pre)order in which the columns should be updated is completely determined by the graph (thanks to acyclicity); the details and proof can be found in \cite{BarSan2019}. Observe that also the graph has been modified by the intervention, as the arrow from $U$ to $X$ has been removed.
\end{exm}

\begin{df}\label{ct_semantics_df}
Let $\varphi$ be a formula of the language $\COV$ or  $\COD$, and $T=(T^-,\mathcal F)$ a causal team over $\sigma$. We define the satisfaction relation $T\models^c\varphi$ (or simply $T\models\varphi$) over causal teams inductively as follows: %\todoy{I now think the notations $\models^{ct}$ and $\models^{gct}$ are too heavy, how about $\models^{c}$ and $\models^{g}$?}
\begin{itemize}
\item $T\models X=x$ $\iff$ for all $s\in T^-$, $s(X)=x$.\footnote{Note once more that  the symbol $x$ is used as both  a syntactical and a semantical object.}
\item $T\models\, \dep{\SET X}{Y}$ $\iff$ for all $s,s'\in T^-$, $s(\SET X)=s'(\SET X)$ implies $s(Y)=s'(Y)$.
\item $T\models \neg\alpha$ $\iff$ for all $s\in T^-$, $(\{s\},\mathcal F)\not \models \alpha$.
%\item $T\models Y\neq y$ if and only if , for all $s\in T^-$, $s(Y)\neq y$.
\item $T\models \varphi\land \psi$ $\iff$  $T\models \varphi$ and $T\models \psi$.
\item $T\models \varphi\lor \psi$ $\iff$ there are two causal subteams
$T_1,T_2$ of $T$ such that $T_1^-\cup T_2^- = T^-$, $T_1\models \varphi$ and $T_2\models \psi$.
\item $T\models\varphi\vvee\psi$ $\iff$ $T\models \varphi$ or $T\models\psi$.
%\item $T\models \psi \supset \chi$ $\iff$ $T^\psi \models \chi$.
\item $T\models \SET X = \SET x \cf \varphi$ $\iff$ $\SET X = \SET x$ is inconsistent or $T_{\SET X=\SET x}\models\varphi$.% \todoy{ok}%\todoy{can we stipulate $T_{\SET X=\SET x}=\emptyset$ if $\SET X = \SET x$ is inconsistent?}\todob{It would work, but we use lines and we have to specify \emph{which} empty team is generated.}
\end{itemize}
\end{df}

We write a dependence atom $\dep{}{X}$ with an empty first component  as $\con{X}$. The semantic clause for $\con{X}$ reduces to:
\begin{itemize}
\item $T\models\, \con{X}$ ~~ iff ~~ for all $s,s'\in T^-$, $s(X)=s'(X)$.
\end{itemize}
Intuitively, the atom $\con{X}$ states that $X$ has a constant value in the team. %\todoy{OK}%and is thus called a {\em constancy atom}. \todob{Do we need to name it?}
It is easy to verify that dependence atoms are definable in $\COV$: %in terms of the intuitionistic disjunction $\vvee$:
%\begin{equation}\label{dep_definable_vvee}
\begin{center}\hfill$\displaystyle\depc{Y}\equiv \bigvvee_{y\in \Ran(Y)}Y=y~\text{ and }~\dep{\SET X}{Y}\equiv\bigvee_{\SET x\in \Ran(\SET X)}(\SET X=\SET x\,\wedge \depc{Y}).$\hfill (1)\end{center}
%\end{equation}
%\todoy{maybe we should add some discussion (either here or in the introduction) about the connection and difference between dependence atoms and function components, or the motivation of adding dependence atoms}

\noindent The \emph{selective implication} $\alpha \supset\varphi$ from \cite{BarSan2018} is now definable as $\neg \alpha\lor \varphi$. %We will not need it in the present paper.
Its semantic clause reduces to:
\begin{itemize}
    \item $T \models \alpha\supset \psi  \iff T^\alpha \models \psi$, where $T^\alpha$ is the (unique) causal subteam of $T$ with team component $\{s \in T^- \mid \{s\}\models \alpha\}$.
\end{itemize}

\begin{exm}
Consider the causal team $T$ and the intervention $do(X=1)$ from Examples \ref{EXCT} and \ref{EXCTINT}. Clearly, $T_{X=1}\models Y=2$, and thus $T\models X=1 \cf Y=2$. %by the semantic clause of the counterfactual implication. 
We also have that $T\models\dep{Y}{Z}$, while $T_{X=1}\not\models\dep{Y}{Z}$ (contingent dependencies are not in general preserved by interventions). Observe that $T\models Y\neq 2 \lor Y=2$, while $T\not\models Y\neq 2 \vvee Y=2$.
%$T\not \models Y\neq 1$, while $T\models Y\neq 1 \lor Y=2$. 
%The team $T^{X=1}$ consists of only the  assignment in the second row of the table for $T$, and we have that $T\models X=1 \supset Y=2$. %\todoy{changed a bit the example to illustrate negation, $\vvee$, and $\dep{\SET X}{y}$}

%Our evaluation of the intervention $do(X=1)$ shows that $T_{X=1}\models Y=2$ (since each of the two assignments in $T_{X=1}$ satisfies $Y=2$). Therefore, $T\models X=1 \cf Y=2$. On the other hand $T\not \models Y=2$, since the first assignment satisfies, instead $Y=1$; but $T$ satisfies the weaker statement $Y=1 \lor Y=2$. Observe also that $T\models X=1 \supset Y=2$, because $T^{X=1}$ contains only the second assignment, which satisfies $Y=2$. In this occasion, the two analogous conditionals built with $\cf$ and $\supset$ coincide in truth value, but that is not the case in general.
\end{exm}

%%%%%%%%%%%%%%%%%%%%%%%%%%%%%%%%%%%%%%%%%%%%%%%%%
\subsection{Generalized causal teams} \label{SUBSGCT}

%Notice that we did not define what the \emph{union} of two causal teams may be, even though that might have helped in assigning a semantic clause to the tensor disjunction. The reason is that there i

%For technical reasons, which will  soon be explained (see in particular section \ref{SECDISJ}), we propose in this section a  generalization of causal team semantics. 
%In this section, we introduce generalized causal teams.
Given a signature $\sigma$, write 
\(\SEM:=\{(s,\mathcal F)\in \ASS\times \FUN\mid s\text{ is compatible with }\mathcal F\}.\)
The pairs $(s,\mathcal F)\in \SEM$ can be easily identified with the \emph{deterministic causal models} (also known as \emph{deterministic structural equation models}) that are considered in the literature on causal inference (\cite{SpiGlySch1993},\cite{Pea2000}, %,\cite{Woo2003},
etc.).
One can %thus also 
identify a causal team $T=(T^-,\mathcal F)$ with the set %\todoy{I invented this notation for now, without careful thoughts, may change it latter}
\begin{center}\(%\label{ct2gct_df}
T^g=\{(s,\mathcal F)\in \SEM\mid s\in T^-\}
\)
\end{center}
of deterministic causal models with a uniform  function component $\mathcal F$ throughout the team. In this section, we introduce a more general notion of causal team, called {\em generalized causal team}, where the function component $\mathcal F$ does not have to be constant thoroughout the team. %\todoy{``uniform" is not a good word, because we also have the notion of uniform gct}
%While our initial motivation for this generalized notion is a technical one (and will be explained in the sequel, particularly in section \ref{SECDISJ}),  this notion also has interests in its own right. 
%Let us first formally define the new notion before we discuss the conceptual motivations. \todoy{something like this...}

%We point out that pairs of the form $(s,\F)$, where $s$ is an assignment and $\F$ is a function component of the same signature, can be identified with the \emph{deterministic causal models} (also know as \emph{deterministic structural equation models}) that are considered in the literature on causal inference (\cite{SpiGlySch1993},\cite{Pea2000},\cite{Woo2003}, etc.) provided that $s$ is compatible with $\F$, in the sense of definition \ref{DEFCOMP}. For any given signature $\sigma$, we will denote as $\SEM$ the set of such pairs. 

%\begin{df}
%A \textbf{function component} $\mathcal F$ of signature $\sigma=(\Dom,\Ran)$ is a function which assigns to each variable $V$ from a set $S_{\mathcal F}\subseteq \Dom$ a function $\Ran(PA_V)\rightarrow \Ran(V)$ (for some $PA_V\subseteq \Dom\setminus\{V\}$). $S_{\mathcal F}$ is the set of \textbf{endogenous} variables for $\mathcal F$, while $\Dom\setminus S_{\mathcal F}$ is the set of \textbf{exogenous} variables.

%As usual, one can associate a graph $G_{\mathcal F}=(\Dom, E_{\mathcal F})$ to $\mathcal F$ by the condition that $(X,Y)\in E_{\mathcal F}$ iff $X\in PA_Y$.

%We say that a function component $\mathcal F$ is \textbf{recursive} if $G_{\mathcal F}$ is acyclic. 
%\end{df}

\begin{df}
A \textbf{generalized causal team} $T$ over a signature $\sigma$ is a set of pairs $(s,\mathcal F)\in \SEM$, that is, $T\subseteq \SEM$.
%where $\mathcal F$ is a function component of signature $\sigma$ and $s$ is an assignment of signature $\sigma$ that is compatible with $\F$. In other words, $T$ is a subset of $\SEM$. 
\end{df}

 Intuitively,  a generalized causal team encodes uncertainty about  which causal model governs the variables in $\Dom$ - i.e., uncertainty both on the values of the variables and on the laws that determine them. Distinct elements $(s,\mathcal F), (t,\mathcal G)$ of the same generalized causal team may also disagree on what is the set of endogenous variables, or on whether the system is recursive or not. A generalized causal team is said to be \textbf{recursive} if, for each  pair $(s,\mathcal F)$ in the team, the associated graph $G_\F$ is recursive. In this paper we  only consider recursive generalized causal teams.%\todoy{The recursiveness assumption should be specified nicely}

%\begin{df}
For any generalized causal team $T$, define the \textbf{team component} of $T$ to be the set
\(T^- := \{s \mid (s,\mathcal F) \in T \text{ for some } \mathcal F\}.\) %\todoy{Would $T_0$ be a better notation? Is $T^-$ a notation used already in the literature?} \todob{$T^-$ is the notation that was used in all the papers cowritten with Sandu, an in all my drafts. Nothing sacred, but I somewhat like it. }
%\end{df}
%
%\begin{df}
A \textbf{causal subteam} of $T$ is a subset $S$ of $T$, denoted as $S\subseteq T$.
%\end{df}
%\begin{df}
The \textbf{union} $S\cup T$ of two generalized causal teams $S,T$ is their set-theoretic union.
%\end{df}
%For a comparison, notice that there seems to be no reasonable way to define, in general, the union of two causal teams, as there is no canonical way to combine together two different function components. We will define such unions only in a special case (see section \ref{SUBSEXPRCOCT}). \todoy{need to elaborate}

%We have pointed out  that 
A causal team $T$ can be identified with the generalized causal team $T^g$, which has a constant function component  in all its elements. 
Conversely, if $T$ is a nonempty generalized causal team in which all elements  have the same function component $\mathcal F$, i.e., $T=\{(s,\mathcal F)\mid s\in T^-\}$, we can  %then 
naturally identify $T$ with the causal team 
\begin{center}\(T^c=(T^-,\mathcal F).\)\end{center}
In particular, a singleton generalized causal team $\{(s,\mathcal F)\}$ corresponds to  a singleton causal team $(\{s\},\F)$. Applying a (consistent) intervention $do(\SET X = \SET x)$ on $(\{s\},\mathcal F)$ generates a causal team $(\{s_{\SET X = \SET x}\},\mathcal F_{\SET X = \SET x})$ as defined in Definition \ref{intervention_ct_df}.
We can then define the result of the intervention $do(\SET X = \SET x)$ on $\{(s,\mathcal F)\}$ to be the generalized causal team $(\{s_{\SET X = \SET x}\},\mathcal F_{\SET X = \SET x})^g=\{(s_{\SET X = \SET x},\mathcal F_{\SET X = \SET x})\}$.
%The corresponding generalized causal team $(\{s_{\SET X = \SET x}\},\mathcal F_{\SET X = \SET x})^g=\{(s_{\SET X = \SET x},\mathcal F_{\SET X = \SET x})\}$ is then defined as the resulting team after the intervention $do(\SET X = \SET x)$ on  $\{(s,\mathcal F)\}$. 
%More generally, 
Interventions on arbitrary generalized causal teams are defined as follows. %\todoy{I changed the next definition slightly: since we consider recursive teams only, interventions on singleton teams are singletons, right?}
%\todob{Ok.}

%Conversely, for any singleton causal team $(\{s\},\mathcal F)$, we have that
%\[(\{s\},\mathcal F)^g=\{(s,\mathcal F)\}\]

\begin{df}[Intervention over generalized causal teams]
Let $T$ be a (recursive) generalized causal team, and $\SET X = \SET x$  a consistent equation over $\sigma$. The intervention $do(\SET X = \SET x)$ on $T$ generates the generalized causal team 
\(T_{\SET X = \SET x} :=\{(s_{\SET X = \SET x},\F_{\SET X = \SET x})\mid (s,\F)\in T\}.\)
%\[T_{\SET X = \SET x} :=\bigcup\{(s,\F)_{\SET X = \SET x}\mid (s,\F)\in T\}.\]
%where $(s,\F)_{\SET X = \SET x}$ is the generalized causal team obtained by first implementing intervention $do(\SET X = \SET x)$ on the corresponding singleton causal team $(\{s\},\mathcal F)$, and then identifying the resulting causal team $(\{s\},\mathcal F)_{\SET X = \SET x}$ as a genearalized causal team.
%\[(s,\F)_{\SET X = \SET x}=(\{s\},\mathcal F)_{\SET X = \SET x}\]
\end{df}

%Concerning interventions, we first define what it means to intervene on an $(s,\F) \in \SEM$. Let $\sigma = (\Dom,\Ran)$.  Let $\SET X = \SET x$ be a consistent conjunction (where $\SET X\subseteq \Dom$, $\SET x\in \Ran(\SET X)$). Let $\SET U$ be $\Dom\setminus End(\F)$, and let $\SET U':= \SET U \setminus \SET X$. Then $(s,\F)_{\SET X = \SET x}$ is defined to be the generalized causal team
%$$ \{ (t,\F_{\SET X = \SET x}) \mid t \text{ consistent with } \F, t(\SET X)= \SET x, t(\SET U') = s(\SET U') \} $$
%where $\F_{\SET X = \SET x}$ is the restriction of $\F$ to $End(\F)\setminus \SET X$. Notice that in the special case that $\F$ is recursive, $(s,\F)_{\SET X = \SET x}$ is a singleton generalized causal team, which we may denote as $\{(s_{\SET X = \SET x},\F_{\SET X = \SET x})\}$.
%
%More generally, the result of an intervention $do(\SET X = \SET x)$ on a generalized causal team $T$ will be defined as:
%$$T_{\SET X = \SET x} := \bigcup \big\{ (s,\F)_{\SET X = \SET x} \mid (s,\F) \in T\big\}.$$

\begin{df}
Let $\varphi$ be a formula of the language $\COV$ or  $\COD$, and $T$ a generalized causal team over $\sigma$. The satisfaction relation $T\models^g\varphi$ (or simply $T\models\varphi$) over generalized causal teams is defined in the same way as in Definition \ref{ct_semantics_df}, except for slight differences in the following clauses:
\begin{itemize}
\item $T\models^g \neg\alpha$ ~~ iff ~~ for all $(s,\mathcal F)\in T$, $\{(s,\mathcal F)\}\not \models \alpha$.
\item $T\models^g \varphi\lor \psi$ ~~ iff ~~ there are two generalized causal subteams $T_1,T_2$ of $T$ such that $T_1\cup T_2 = T$, $T_1\models \varphi$ and $T_2\models \psi$.
\end{itemize}
\end{df}
We list some closure properties for our logics over both causal teams and generalized causal teams in the next theorem, whose proof  is left to the reader, or  see \cite{BarSan2019} for the causal team case.

%\todoy{definition of the empty team: If $T^-=\emptyset$, then we write $T=\emptyset$ (for both ct and gct).}\todob{Maybe we don't need it (see theorem below).} 
%\todoy{specify the notation for singletons: If $T^-=\{s\}$, then we write $T=\{s\}$, that is, $\{s\}=(\{s\},\F)$ or $\{s\}=\{(s,\F)\}$}

\begin{teo} \label{TEOGENDW}  % \label{TEOGENFLAT}
Let $T,S$ be (generalized) causal teams over some signature $\sigma$.%, and  $\varphi$  a $\COV$-formula or $\COD$-formula.
\begin{description}
\item[Empty team property:] If $T^-=\emptyset$, then $T\models\varphi$. %\todoy{I'm fine with this $\emptyset$}
\item[Downward closure:] If $T\models\varphi$ and $S\subseteq T$, then $S\models\varphi$.
% $T'$ is a  causal subteam of  $T$, then 
%\(T\models\varphi\Longrightarrow T'\models\varphi.\)
%\item[Union closure:] If $T\cup S$ is defined, $T\models\varphi$ and $S\models\varphi$, then $T\cup S\models\varphi$.
%$T\cup T'$ is defined, then
%\[T\models\varphi\text{ and }T'\models\varphi\Longrightarrow T\cup T'\models\varphi.\]
\item[Flatness of $\Co$-formulas:] If $\alpha$ is a $\CO$-formula, then %\todoy{...?}
%\[T\models \varphi \iff  \{s\}\models\varphi \text{ for all }s\in T^-,\]
%where $\{s\}=(\{s\},\F)$ in case $T=(T^-,\F)$ is a causal team, or $\{s\}=\{(s,\F)\}$ in case $T$ is a generalized causal team.

\noindent\(T\models \alpha \iff  (\{s\},\F)\models^c\alpha \text{ for all }s\in T^-
~(\text{resp. } \{(s,\F)\}\models^g\alpha \text{ for all }(s,\F)\in T).\)
\end{description}
\end{teo}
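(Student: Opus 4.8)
The plan is to prove all three properties by a simultaneous induction on the structure of the formula $\varphi$, handling the causal-team case and the generalized-causal-team case in parallel (the arguments differ only notationally). I will treat the three items not as independent claims but as a single inductive package, since the flatness of $\Co$-formulas is exactly what makes the negation and disjunction clauses behave well, and downward closure for the counterfactual clause needs the fact that interventions commute with taking subteams.

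\textbf{Empty team property.} A one-line induction: if $T^-=\emptyset$ then all universally quantified clauses (atoms, dependence atoms, negation) hold vacuously; for $\land$ both conjuncts inherit emptiness; for $\lor$ take $T_1=T_2=T$; for $\vvee$ note one disjunct already holds by IH; for $\SET X=\SET x\cf\varphi$ observe that $T_{\SET X=\SET x}$ also has empty team component (the intervention maps the empty set of assignments to the empty set), so apply IH. The base atomic case and the $\vvee$ case are the only places where one must be mildly careful that "$T\models\varphi$" is what is being proved rather than falsity.

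\textbf{Downward closure.} Again induction on $\varphi$, with $S\subseteq T$. Atoms $X=x$ and dependence atoms $\dep{\SET X}{Y}$: the defining conditions are universally quantified over $S^-\subseteq T^-$, so they transfer. Negation $\neg\alpha$: same, quantifying over fewer single assignments. Conjunction: immediate from IH on both conjuncts. Intuitionistic disjunction $\vvee$: if $T\models\varphi$ then $S\models\varphi$ by IH (similarly for $\psi$). Ordinary disjunction $\varphi\lor\psi$: given witnesses $T_1,T_2$ with $T_1^-\cup T_2^-=T^-$, intersect with $S$ — set $S_i$ to be the subteam of $S$ with team component $S^-\cap T_i^-$ (in the generalized case, $S_i=S\cap T_i$) — these are subteams of $T_1$ and $T_2$ respectively, so by IH $S_i\models$ the appropriate disjunct, and $S_1^-\cup S_2^- = S^-$. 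Counterfactual $\SET X=\SET x\cf\varphi$: if $\SET X=\SET x$ is inconsistent we are done; otherwise the key lemma is that $S\subseteq T$ implies $S_{\SET X=\SET x}\subseteq T_{\SET X=\SET x}$, which holds because in the causal-team case $\F$ is shared, so the rewriting of each $s$ is computed identically, and in the generalized case $S_{\SET X=\SET x}=\{(s_{\SET X=\SET x},\F_{\SET X=\SET x})\mid (s,\F)\in S\}\subseteq\{(s_{\SET X=\SET x},\F_{\SET X=\SET x})\mid (s,\F)\in T\}=T_{\SET X=\SET x}$; then apply IH to $S_{\SET X=\SET x}\subseteq T_{\SET X=\SET x}$.

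\textbf{Flatness of $\Co$-formulas.} Induction over $\CO$-formulas $\alpha$ only (no $\vvee$, no dependence atoms, and crucially negation is applied only to $\CO$-formulas, so the induction stays inside $\CO$). The direction from $T\models\alpha$ to "$(\{s\},\F)\models\alpha$ for all $s\in T^-$" is immediate from downward closure, since singletons $(\{s\},\F)$ are subteams. For the converse: atoms and negated atoms are manifestly determined pointwise by their semantic clauses; $\land$ is routine; for $\lor$, if every singleton satisfies $\alpha\lor\beta$ then partition $T^-$ into $\{s : \{s\}\models\alpha\}$ and its complement $\{s:\{s\}\not\models\alpha\}\subseteq\{s:\{s\}\models\beta\}$ (using the IH/flatness of $\alpha$ to know the first part satisfies $\alpha$ and flatness of $\beta$ for the second); for $\SET X=\SET x\cf\alpha$, the intervention acts on $T$ by acting separately on each $(\{s\},\F)$ — i.e. $T_{\SET X=\SET x}^-=\{s_{\SET X=\SET x}\mid s\in T^-\}$ and the singleton $(\{s_{\SET X=\SET x}\},\F_{\SET X=\SET x})$ equals $(\{s\},\F)_{\SET X=\SET x}$ — so "$(\{s\},\F)_{\SET X=\SET x}\models\alpha$ for all $s$" together with IH on $\alpha$ gives $T_{\SET X=\SET x}\models\alpha$, hence $T\models\SET X=\SET x\cf\alpha$.

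\textbf{Main obstacle.} The only genuinely non-mechanical point is the interaction between interventions and the subteam/singleton relations — namely verifying that $(\cdot)_{\SET X=\SET x}$ is "local" in the sense that it is computed assignment-by-assignment and respects $\subseteq$. In the causal-team case this is because the function component $\F$ is a single shared object and Definition \ref{intervention_ct_df} defines $s_{\SET X=\SET x}$ purely in terms of $s$ and $\F$; in the generalized case it is even more transparent from the definition of $T_{\SET X=\SET x}$ as a pointwise image. Once this locality is recorded once, the counterfactual cases in all three parts follow uniformly, and everything else is bookkeeping. I would state this locality observation explicitly as a preliminary remark before launching the induction.
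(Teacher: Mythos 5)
Your proof is correct and is exactly the routine simultaneous induction the paper has in mind when it leaves this proof to the reader (citing \cite{BarSan2019} for the causal-team case): the empty-team and downward-closure clauses are handled as you do, and flatness rests on the pointwise semantic clause for $\neg$ together with the locality of interventions, i.e.\ $(\{s\},\F)_{\SET X=\SET x}=(\{s_{\SET X=\SET x}\},\F_{\SET X=\SET x})$ and $T_{\SET X=\SET x}^-=\{s_{\SET X=\SET x}\mid s\in T^-\}$. Your preliminary ``locality'' remark is precisely the right thing to isolate; no gaps.
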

%\begin{proof}
%Easy, or see \cite{BarSan2019} for the causal team case.
%\end{proof}

The team semantics over causal teams and that over generalized causal teams with a constant function component are essentially equivalent, in the sense of the next lemma, whose proof is left to the reader. 
\begin{lm}\label{LEMIDENTIFY}
%\todoy{lemma  changed, some later parts may have to be adapted. One item is sufficient, the other one is a corollary, but they both make sense}
\begin{enumerate}[(i)]
\item For any causal team $T$, we have that $T\models^c\varphi\iff T^g\models^g\varphi.$
\item For any nonempty generalized causal team $T$ with a unique function component, we have that $T\models^g\varphi\iff T^c\models^c\varphi.$ %\todob{This lemma looked much out of place.}
\end{enumerate}
\end{lm}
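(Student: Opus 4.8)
The plan is to prove (i) by induction on the structure of $\varphi$ (covering all the connectives of $\COV$ and $\COD$ at once), and then obtain (ii) as an immediate corollary. Throughout I will rely on two elementary observations: the team component of the generalized causal team $T^g$ is exactly $T^-$; and the operators $(\cdot)^g$ and $(\cdot)^c$ are mutually inverse between causal teams and nonempty generalized causal teams with a unique function component — in particular $(T^c)^g = T$ whenever $T$ has a unique function component, since $T^c=(T^-,\F)$ and every $(s,\F)\in T$ lies in $\SEM$.

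For the induction in (i) I would state the hypothesis as: for every causal team $S$ over $\sigma$ and every proper subformula $\psi$ of $\varphi$, $S\models^c\psi\iff S^g\models^g\psi$. The base cases $X=x$ and $\dep{\SET X}{Y}$ are immediate, since both satisfaction relations only inspect the common team component; the cases of $\wedge$ and $\vvee$ follow directly from the hypothesis. For $\neg\alpha$: $T\models^c\neg\alpha$ says $(\{s\},\F)\not\models^c\alpha$ for all $s\in T^-$, while $T^g\models^g\neg\alpha$ says $\{(s,\F)\}\not\models^g\alpha$ for all $s\in T^-$ (all pairs in $T^g$ share the function component $\F$); since $\{(s,\F)\}=(\{s\},\F)^g$, the hypothesis applied to the singleton causal team $(\{s\},\F)$ bridges the two. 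The two clauses that need bookkeeping are $\lor$ and $\cf$. For $\varphi\lor\psi$ I would check that causal subteams of $T=(T^-,\F)$ correspond bijectively, via $(\cdot)^g$, to subsets of $T^g$: a causal subteam $(S^-,\F)$ maps to $\{(s,\F)\mid s\in S^-\}\subseteq T^g$, and conversely any $U\subseteq T^g$ consists of pairs all with function component $\F$, hence equals $S^g$ for the causal subteam $S=(\{s\mid(s,\F)\in U\},\F)$ of $T$; moreover $S_1^-\cup S_2^-=T^-$ iff $S_1^g\cup S_2^g=T^g$. Combined with the hypothesis, the two clauses for $\lor$ match. For $\SET X=\SET x\cf\varphi$ I would verify that the identification commutes with interventions, i.e. $(T^g)_{\SET X=\SET x}=(T_{\SET X=\SET x})^g$; this is a direct unwinding of Definition \ref{intervention_ct_df} and its generalized counterpart, using that the pointwise rewriting $s\mapsto s_{\SET X=\SET x}$ relative to $\F$ is literally the same in both settings. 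The clause then follows from the hypothesis applied to $T_{\SET X=\SET x}$, the inconsistent case being trivial.

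For (ii): if $T$ is a nonempty generalized causal team with a unique function component, then $T=(T^c)^g$, so $T^c\models^c\varphi\iff(T^c)^g\models^g\varphi\iff T\models^g\varphi$ by (i). I do not expect any genuine obstacle here — the whole lemma is a matter of matching up the definitions — and the only points that must be spelled out carefully, to keep the induction honest, are the commutation of $(\cdot)^g$ with interventions and the subteam correspondence used in the $\lor$ case.
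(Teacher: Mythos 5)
Your proof is correct and is exactly the routine structural induction the paper has in mind when it leaves this lemma "to the reader": the only nontrivial points are the ones you isolate, namely that $(\cdot)^g$ commutes with interventions and induces a bijection between causal subteams of $T$ and subsets of $T^g$ preserving covers. Part (ii) via $T=(T^c)^g$ is likewise the intended reduction to (i).
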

% \begin{proof}
% Easy. \todoy{May include a little proof in appendix}
% \end{proof}

\begin{coro}\label{gct_sem_cons_2_ct}
For any set $\Delta\cup\{\alpha\}$  of $\CO$-formulas, $\Delta\models^{g}\alpha$ iff $\Delta\models^{c}\alpha$. 
\end{coro}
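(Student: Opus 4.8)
The plan is to prove the two directions separately, using only flatness of $\CO$-formulas (Theorem~\ref{TEOGENDW}) together with the identification Lemma~\ref{LEMIDENTIFY}.

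First I would handle $\Delta\models^{g}\alpha\Rightarrow\Delta\models^{c}\alpha$, which is immediate: let $T$ be any causal team with $T\models^{c}\delta$ for every $\delta\in\Delta$; by Lemma~\ref{LEMIDENTIFY}(i) the generalized causal team $T^{g}$ satisfies every $\delta\in\Delta$, so the hypothesis gives $T^{g}\models^{g}\alpha$, and Lemma~\ref{LEMIDENTIFY}(i) again gives $T\models^{c}\alpha$. This direction does not even use that the formulas lie in $\CO$.

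For the converse, $\Delta\models^{c}\alpha\Rightarrow\Delta\models^{g}\alpha$, I would take a generalized causal team $T$ with $T\models^{g}\delta$ for all $\delta\in\Delta$ and show $T\models^{g}\alpha$ by verifying the flatness criterion pointwise. Fix an arbitrary $(s,\F)\in T$. Since $\{(s,\F)\}\subseteq T$, downward closure (Theorem~\ref{TEOGENDW}) yields $\{(s,\F)\}\models^{g}\delta$ for all $\delta\in\Delta$; as $\{(s,\F)\}$ is a nonempty generalized causal team with a single function component, and $s$ is compatible with $\F$ because $(s,\F)\in\SEM$, Lemma~\ref{LEMIDENTIFY}(ii) converts this into $(\{s\},\F)\models^{c}\delta$ for all $\delta\in\Delta$. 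The hypothesis $\Delta\models^{c}\alpha$ then gives $(\{s\},\F)\models^{c}\alpha$, and one more application of Lemma~\ref{LEMIDENTIFY}(ii) gives $\{(s,\F)\}\models^{g}\alpha$. Since $(s,\F)\in T$ was arbitrary and $\alpha$ is a $\CO$-formula, the flatness clause of Theorem~\ref{TEOGENDW} yields $T\models^{g}\alpha$; the degenerate case $T=\emptyset$ is handled vacuously, consistently with the empty team property.

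I do not expect a genuine obstacle here, since the statement is essentially a repackaging of flatness and the identification lemma; the only point requiring a little care is to route the argument through singleton teams rather than comparing $T$ with a single causal team directly, because a generalized causal team need not have a uniform function component, so Lemma~\ref{LEMIDENTIFY}(ii) becomes applicable only after passing to singletons.
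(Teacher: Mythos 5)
Your proposal is correct and follows essentially the same route as the paper: the paper's own proof also reduces both semantics to singleton teams via Lemma \ref{LEMIDENTIFY} and then invokes flatness of $\CO$-formulas. Your first direction goes through $T^{g}$ directly instead of through singletons, but this is an inessential variation of the same argument.
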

\begin{proof}
By Lemma \ref{LEMIDENTIFY}, %we have that
$\{(s,\mathcal{F})\}\models^{g}\beta$ iff $(\{s\},\mathcal F)\models^{c}\beta$ for any $\beta\in \Delta\cup\{\alpha\}$. Thus, the claim follows from the flatness of $\CO$-formulas.
\end{proof}

%%%%%%%%%%%%%%%%%%%%%%%%%%%%%%%%%
\section{Characterizing function components}\label{SECCHARFUN}

\subsection{Equivalence of function components %up to dummy arguments
} \label{SUBSEQUIV}

%It may happen that two functions \emph{differ only for dummy arguments}; for example, the expression $X + Y$ typically expresses a function $f(X,Y)$ of two arguments $X,Y$; however, it can also be used to express an $n+2$-ary function $g(X,Y,Z_1,\dots,Z_n)$. $Z_1,\dots, Z_n$ are then called dummy arguments. More precisely:

Consider a binary function $f$ and an $(n+2)$-ary function $g$ defined as 
\(f(X,Y)=X+Y\text{ and }g(X,Y,Z_1,\dots,Z_n)=X+Y.\)
Essentially $f$ and $g$ are the same function: $Z_1,\dots, Z_n$  are dummy arguments of $g$. We now characterize this idea in the notion of two function components being equivalent up to dummy arguments.
%To characterize this idea we now define the notion of two function components being equivalent up to dummy arguments.

\begin{df}\label{DEFSIMFG}
Let $\mathcal F,\mathcal G$ be two function components over  $\sigma = (\Dom,\Ran)$.
% be generalized causal teams of the same signature, and $V\in End(T)\cap End(T')$. 
\begin{itemize}

\item Let $V\in \Dom$. Two functions $\F_V$ and $G_V$ are said to be equivalent up to dummy arguments, denoted as $\F_V \sim \G_V$, if 
for any $\SET x\in \Ran(PA_V^{\F}\cap PA_V^{\G})$, $\SET y\in \Ran(PA_V^{\F}\setminus PA_V^{\G})$ and  $\SET z \in \Ran(PA_V^{\G}\setminus PA_V^{\mathcal F})$, 
we have that \(\F_V(\SET x\SET y)= \G_V(\SET x\SET z)\) (where we assume w.l.o.g. the shown orderings of the arguments of the functions). %\todoy{Is this good?}

%Finally, we write $T\sim T'$ if $T^- = (T')^-$ and $\mathcal F_T \sim \mathcal F_{T'}$.

\item %We write $V\in Con(\mathcal F)$ if $V\in End(\mathcal F)$ and $\mathcal F(V)$ is a constant function.
Let $\Con(\F)$ denote the set of endogenous variables $V$ of $\F$ for which  $\F_V$ is a constant function, i.e., for some fixed $c\in \Ran(V)$, $\F_V(\SET p)=c\text{ for all }\SET p\in PA_V^{\F}$. %\todoy{changed}
%\[\Con(\F)=\{V\in \End(\F)\mid \text{ for some fixed }c\in \Ran(V),~\F_V(\SET p)=c\text{ for all }\SET p\in PA_V^{\F}\}.\]
%\todoy{Of course you are right, corrected}%\todob{Do you think the notation $\F_V(PA_V^{\F})$ is universally intelligible? I guess what you meant is that the equation holds for each $\SET p \in Ran(PA_V^F)$.}
%\item 
We say that $\F$ and $\G$ are \textbf{equivalent up to dummy arguments}, denoted as $\mathcal F \sim \mathcal G$, if 
%\begin{itemize}
%\item 
$\End(\mathcal F)\setminus \Con(\mathcal F) = \End(\mathcal G)\setminus \Con(\mathcal G)$, 
%\item 
and $\F_V \sim \G_V$ holds for all $V\in \End(\mathcal F)\setminus \Con(\mathcal F)$. 
%\end{itemize}
%\item A generalized causal team $T$ is called \textbf{uniform} if $\mathcal F\sim \mathcal G$ for all $(s,\mathcal F),(t,\mathcal G)\in T$.
\end{itemize}
\end{df}

It is easy to see that $\sim$ is an equivalence relation. The next lemma shows that the relation $\sim$ is  preserved under interventions. %(see Appendix for the proof).

\begin{lm}\label{LEMINTSIM1}
For any function components $\mathcal F,\mathcal G\in \FUN$ and consistent equation $\SET X=\SET x$ over $\sigma$, we have that
%\[
$\mathcal F \sim \mathcal G$  implies $\mathcal F_{\SET X = \SET x} \sim \mathcal G_{\SET X = \SET x}$.
%\] 
\end{lm}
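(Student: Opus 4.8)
The plan is to prove $\mathcal F_{\SET X=\SET x}\sim\mathcal G_{\SET X=\SET x}$ directly from the definitions, using that intervention by $do(\SET X=\SET x)$ simply deletes the variables in $\SET X$ from the domain of the function component and leaves all other functions (hence their parent sets) untouched. First I would record the elementary observations about $\mathcal F_{\SET X=\SET x}$: its endogenous set is $\End(\mathcal F)\setminus\SET X$, for each $V\in\End(\mathcal F)\setminus\SET X$ we have $PA_V^{\mathcal F_{\SET X=\SET x}}=PA_V^{\mathcal F}$ and $(\mathcal F_{\SET X=\SET x})_V=\mathcal F_V$, and similarly for $\mathcal G$. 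In particular $\Con(\mathcal F_{\SET X=\SET x})=\Con(\mathcal F)\setminus\SET X$ and likewise for $\mathcal G$, since being a constant function is a property of the function itself.

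Next I would verify the first clause in Definition \ref{DEFSIMFG} for the intervened components. From $\mathcal F\sim\mathcal G$ we have $\End(\mathcal F)\setminus\Con(\mathcal F)=\End(\mathcal G)\setminus\Con(\mathcal G)$; intersecting this equality with the complement of $\SET X$ and using the identities above gives
\[
\End(\mathcal F_{\SET X=\SET x})\setminus\Con(\mathcal F_{\SET X=\SET x})=\big(\End(\mathcal F)\setminus\Con(\mathcal F)\big)\setminus\SET X=\big(\End(\mathcal G)\setminus\Con(\mathcal G)\big)\setminus\SET X=\End(\mathcal G_{\SET X=\SET x})\setminus\Con(\mathcal G_{\SET X=\SET x}).
\]
Then for each $V$ in this common set, $V$ is an endogenous non-constant variable of both $\mathcal F$ and $\mathcal G$, so $\mathcal F_V\sim\mathcal G_V$ by hypothesis; but $(\mathcal F_{\SET X=\SET x})_V=\mathcal F_V$ and $(\mathcal G_{\SET X=\SET x})_V=\mathcal G_V$ with the same parent sets, so the pointwise condition defining $\mathcal F_V\sim\mathcal G_V$ is literally the same as the one defining $(\mathcal F_{\SET X=\SET x})_V\sim(\mathcal G_{\SET X=\SET x})_V$. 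This gives $\mathcal F_{\SET X=\SET x}\sim\mathcal G_{\SET X=\SET x}$.

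There is no real obstacle here; the only point requiring a little care is the bookkeeping around $\Con$: one must note that whether $\mathcal F_V$ is constant does not change under the intervention (the intervention never modifies $\mathcal F_V$ for $V\notin\SET X$, and removes $V$ entirely for $V\in\SET X$), so that $\Con(\mathcal F_{\SET X=\SET x})=\Con(\mathcal F)\setminus\SET X$ exactly. One should also double-check the edge case where some $X_i$ lies in $\End(\mathcal F)$ but not $\End(\mathcal G)$ (or in $\Con$ of one but not the other); this is harmless because such an $X_i$ is removed from the endogenous set on both sides by the intervention, and the set equality $\End(\mathcal F)\setminus\Con(\mathcal F)=\End(\mathcal G)\setminus\Con(\mathcal G)$ together with set difference by $\SET X$ handles it automatically. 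I would also remark that recursivity is preserved (deleting vertices and incident edges from an acyclic graph keeps it acyclic), so the intervened components are legitimate objects, though this is not strictly needed for the statement as phrased.
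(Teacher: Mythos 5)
Your proof is correct and follows essentially the same route as the paper's: both rest on the identities $\End(\mathcal F_{\SET X=\SET x})=\End(\mathcal F)\setminus\SET X$, $\Con(\mathcal F_{\SET X=\SET x})=\Con(\mathcal F)\setminus\SET X$, and $(\mathcal F_{\SET X=\SET x})_V=\mathcal F_V$ for surviving $V$, from which the claim follows by elementary set manipulation. Your additional remarks on the $\Con$ bookkeeping and preservation of recursivity are fine but not needed beyond what the paper records.
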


\begin{proof}
Suppose $\mathcal F \sim \mathcal G$. Then $\End(\mathcal F)\setminus \Con(\mathcal F) = \End(\mathcal G)\setminus \Con(\mathcal G)$. Observe that
\(\End(\mathcal F_{\SET X=\SET x})=\End(\mathcal F)\setminus\SET X\text{ and }\Con(\mathcal F_{\SET X=\SET x})=\Con(\mathcal F)\setminus \SET X;\)
and similarly for $\mathcal G$. It follows that 
\(
\End(\mathcal F_{\SET X=\SET x})\setminus \Con(\mathcal F_{\SET X=\SET x}) = \big(\End(\mathcal F)\setminus \Con(\mathcal F)\big)\setminus \SET X= \big(\End(\mathcal G)\setminus \Con(\mathcal G) \big)\setminus \SET X= \End(\mathcal G_{\SET X=\SET x})\setminus \Con(\mathcal G_{\SET X=\SET x}).
\)
On the other hand, for any $V\in \End(\mathcal F_{\SET X=\SET x})\setminus \Con(\mathcal F_{\SET X=\SET x})=\big(\End(\mathcal F)\setminus \Con(\mathcal F)\big)\setminus \SET X$, by the assumption, $(\F_{\SET X = \SET x})_V=\F_V\sim \G_V=(\G_{\SET X = \SET x})_V$.
\end{proof}

We now generalize the equivalence relation $\sim$ to the team level. Let us first consider causal teams.
Two causal teams $T=(T^-,\mathcal F)$ and $S=(S^-,\mathcal G)$ of the same signature $\sigma$ are said to be \textbf{similar}, denoted as $T\sim S$, if $\mathcal F \sim \mathcal G$. We say that $T$ and $S$ are \textbf{equivalent}, denoted as $T\approx S$, if $T\sim S$ and $T^- = S^-$. %\todoy{ok}

Next, we turn to generalized causal teams.
We call a generalized causal team $T$ a \textbf{uniform team} if $\mathcal F\sim \mathcal G$ for all $(s,\mathcal F),(t,\mathcal G)\in T$. By Lemma \ref{LEMINTSIM1}, we know that if $T$ is uniform, so is $T_{\SET X = \SET x}$, for any consistent equation $\SET X = \SET x$. %over the same signature.
For any generalized causal team $T$ with $(t,\mathcal F)\in T$,  write %\todoy{I've now deleted the notion $T_{\F}$ from the paper, does not seem to need it any more}
\(
T^\mathcal F := \{(s,\mathcal G)\in T \mid \mathcal G\sim \mathcal F \}.
\)
%and
%\[
%T_\mathcal F := \{(s,\mathcal F) \mid (s,\mathcal G) \in T \text{ for some } \mathcal G\sim \mathcal F \}.
%\] 
%Clearly, $T^{\F}$ is a uniform causal subteam of $T$.%\todob{Removed remark on $T_\F$}%, while $T_{\F}$ is a uniform team in which every element $(s,\mathcal F)$ has actually the same function component $\mathcal F$. 
Two generalized causal teams $S$ and $T$ are said to be  \textbf{equivalent}, denoted as $S\approx T$, if   $(S^{\F})^-=(T^{\F})^-$  for all $\F \in \FUN$. %\todoy{In this case we could actually write: "... where $T^{\F}=...$"}%\todoy{I change the name from ``equivalent" to ``causally equivalent". Feel free to revert to the original name in case you don't like it.}\todob{So, you want two different names for ct vs. gct?}\todoy{No, I meant to use one name for both. Corrected the name for ct}

\begin{teo}[Closure under causal equivalence]\label{PROPEQUIV} 
Let $T,S$ be two (generalized) causal teams over $\sigma$ such that $T\approx S$. %For any formula $\varphi$, %in the language of $\COV$ or $\COd$,  %\todoy{I've combined the two theorem}
We have that \(T\models\varphi\iff S\models\varphi.\)
\end{teo}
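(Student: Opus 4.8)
The plan is to prove $T \models \varphi \iff S \models \varphi$ by induction on the structure of $\varphi$, doing the causal-team case and the generalized-causal-team case in parallel (they are largely the same argument, with the generalized case being the cleaner one to state). The key observation to isolate first is that equivalence $\approx$ interacts well with the two operations that the semantic clauses invoke: taking subteams and performing interventions. For subteams this is essentially immediate: if $S \approx T$ then $S^- = T^-$ (in the causal case) or $(S^\F)^- = (T^\F)^-$ for all $\F$ (in the generalized case), and any "splitting" of $T$ into $T_1, T_2$ witnessing a disjunction can be transported to a splitting of $S$ by intersecting with the relevant team components — here one uses that $\sim$ is an equivalence relation so that the fibers $S^\F$ partition $S$ the same way the fibers $T^\F$ partition $T$. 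For interventions, I would prove as a sublemma that $S \approx T$ implies $S_{\SET X = \SET x} \approx T_{\SET X = \SET x}$; this rests on Lemma~\ref{LEMINTSIM1} (so that $\sim$-classes are respected by $do(\SET X = \SET x)$) together with the fact, which I would check directly from Definition~\ref{intervention_ct_df}, that if $\F \sim \G$ then $s_{\SET X = \SET x}$ computed using $\F$ equals $s_{\SET X = \SET x}$ computed using $\G$ — because equivalent function components agree on the values of all non-constant endogenous variables, and the constant ones are recomputed to the same constants regardless; the descendants-of-$\SET X$ that get rewritten thus receive identical values.

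With these two sublemmas in hand, the induction is routine. For atomic formulas $X = x$ and $\dep{\SET X}{Y}$ and $\con{X}$, satisfaction depends only on the team component (pointwise or pairwise on assignments), and $S \approx T$ gives $S^- = T^-$, so the equivalence is immediate. For $\neg \alpha$, satisfaction is a condition on singletons $(\{s\},\F)$ (resp. $\{(s,\F)\}$) ranging over the team, and here I need that $\alpha$ being false at $(\{s\},\F)$ is equivalent to $\alpha$ being false at $(\{s\},\G)$ whenever $\F \sim \G$; this is the one place where the base case genuinely uses $\sim$ rather than just equality of team components, so I would either fold it into the induction or, more economically, observe that by flatness of $\CO$-formulas (Theorem~\ref{TEOGENDW}) negation of a $\CO$-formula is handled once we know the claim for $\CO$-formulas, which themselves only involve atoms, connectives, and counterfactuals — so the singleton version of the whole statement suffices and is obtained by the same induction restricted to singletons. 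Conjunction is trivial; intuitionistic disjunction $\vvee$ is trivial; ordinary disjunction $\lor$ uses the subteam transport described above; and the counterfactual $\SET X = \SET x \cf \varphi$ uses the intervention sublemma ($S_{\SET X = \SET x} \approx T_{\SET X = \SET x}$) plus the induction hypothesis applied to $\varphi$, noting the inconsistent case is handled symmetrically.

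I expect the main obstacle to be the intervention sublemma, specifically verifying that $\F \sim \G$ implies the recursively-defined assignments $s_{\SET X = \SET x}$ coincide. The subtlety is that $\sim$ allows $\F$ and $\G$ to have different parent sets (dummy arguments) and to differ on which variables are constant ($\Con(\F)$ versus $\Con(\G)$), so one cannot simply match up the recursive clauses of Definition~\ref{intervention_ct_df} syntactically. The argument has to proceed by induction along the (acyclic) graph: for a variable $V$ that is a strict descendant of $\SET X$, if $V \in \End(\F) \setminus \Con(\F) = \End(\G) \setminus \Con(\G)$ then $\F_V \sim \G_V$ and, the values of $V$'s parents under both $\F$ and $\G$ already agreeing by induction hypothesis on the shared parents (and the non-shared ones being dummies), we get equality; if instead $V$ is constant under one (hence, one must argue, "effectively constant" compatibly under the other — this requires a small argument that a $\sim$-equivalent component assigns $V$ the same constant value $c$, which follows because $\F_V \sim \G_V$ would force agreement, or $V \in \Con(\F) \cap \Con(\G)$ with possibly different constants is actually excluded once one recalls that $\Con$ is only "factored out" but the teams still contain compatible assignments — here I would lean on the precise reading that $T \approx S$ compares $(T^\F)^-$ across all $\F$, which already pins down enough). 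I would budget most of the proof-writing effort here and treat everything downstream as bookkeeping.
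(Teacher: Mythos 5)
Your proposal is correct and follows essentially the same route as the paper: induction on $\varphi$, with the counterfactual case reduced to the sublemma that $\approx$ is preserved by interventions (resting on Lemma~\ref{LEMINTSIM1}) and the $\lor$-case handled by transporting the splitting of $T$ to a splitting of $S$ along the $\sim$-fibers. If anything you are more explicit than the paper, which cites Lemma~\ref{LEMINTSIM1} alone for the counterfactual case and leaves implicit the assignment-level verification that $\F\sim\G$ forces $s_{\SET X=\SET x}$ computed under $\F$ and under $\G$ to coincide --- the point you rightly flag as the real work.
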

\begin{proof}
The theorem is proved by induction on $\varphi$. The case $\varphi=\,\SET X=\SET x\cf \psi$ follows  from the fact that
$T_{\SET X = \SET x}\approx S_{\SET X = \SET x}$
 (Lemma \ref{LEMINTSIM1}). The case $\varphi=\psi\vee\chi$ for causal teams follows directly from the induction hypothesis. We now give the proof for this case for generalized causal teams. We only prove the left to right direction (the other direction is symmetric). Suppose $T\models^g\psi\vee\chi$. Then there are $T_0,T_1\subseteq T$ such that $T=T_0\cup T_1$, $T_0\models^g\psi$ and $T_1\models^g\chi$. Consider $S_i=\{(s,\F)\in S\mid \{(s,\F)\}\approx\{(s,\G)\}\text{ for some }(s,\G)\in T_i\}$ ($i=0,1$). It is easy to see that $S_i\approx T_i$ ($i=0,1$) and $S=S_0\cup S_1$. By induction hypothesis we have that $S_0\models^g\psi$ and $S_1\models^g\chi$. Hence $S\models^g\psi\vee\chi$.
\end{proof}

%\begin{proof}
%See Appendix.
%\end{proof}

Thus, none of our languages can tell apart causal teams which are equivalent up to dummy arguments. However, one might not be sure, a priori, that a given argument behaves as dummy for a specific function, as this might e.g. be unfeasible to verify if the variables range over large sets. For this reason, we are keeping this distinction in the semantics instead of quotienting it out.%\footnote{In the \emph{partially defined} causal teams that are introduced in \cite{BarSan2018}, which represent partial knowledge of the causal relationships among variables, it may be impossible in principle to decide whether an argument is dummy or not.}

\subsection{Characterizing function components}

For any function component $\mathcal F$ over some signature $\sigma$, define a $\CO$-formula

\begin{center}\(
\displaystyle\Phi^{\mathcal F}:= \bigwedge_{V\in \End(\mathcal F)} \eta_\sigma(V) \land \bigwedge_{V\in (\Dom\setminus \End(\mathcal F))\cup \Con(\F)} \xi_\sigma(V).
\)
\end{center}
%\begin{center}
$%\begin{align*}
\begin{array}{l}
 \text{where  }\eta_\sigma(V):= \bigwedge\big\{(\SET W = \SET w \land PA_V^{\mathcal F} = \SET p)\cf V = \mathcal F_V(\SET p)\\
 \quad\quad\quad\quad\quad\quad\quad\quad\quad\quad\mid \SET W = \Dom \setminus(PA_V^\mathcal F\cup \{V\}) ,~\SET w \in \Ran(\SET W) ,~ \SET p\in \Ran(PA_V^\mathcal F)\big\}\\
%\end{align*}
\end{array}$%\end{center}
%\[
% \eta_\sigma(V):= 
%\bigwedge_{
%\substack{
%\SET W = \Dom \setminus(PA_V^\mathcal F\cup \{V\}) \\
%\SET w \in \Ran(\SET W) \\
%pa\in \Ran(PA_V^\mathcal F)
%}
%}
%\big((\SET W = \SET w \land PA_V^{\mathcal F} = pa)\cf V = \mathcal F(V)(pa)\big)
%\]
%\begin{center}$%\begin{align*}

\noindent $\begin{array}{l}
%\begin{align*}
\text{and  }\xi_\sigma(V):= \bigwedge\big\{ V=v \supset (\SET W_V=\SET w \cf V=v)\\
\quad\quad\quad\quad\quad\quad\quad\quad\mid v\in \Ran(V),\SET W_V=\Dom\setminus\{V\}, \SET w\in \Ran(\SET W_V)\big\}.
%\end{align*}
\end{array}$
%\end{center}
%\[
%\xi_\sigma(V):= \bigwedge_{v\in \Ran(V)} \big(V=v \supset (\SET W_V=\SET w \cf V=v) \big) 

%\]
%(where $\SET W_V$ stands for $\Dom\setminus\{V\}$). 
\noindent Intuitively, for each non-constant endogenous variable $V$ of $\mathcal F$, the formula $\eta_\sigma(V)$ specifies that all assignments in the (generalized) causal team $T$  in question behave exactly as required by the function $\mathcal F_V$. For each variable $V$ which, according to $\mathcal F$, is exogenous or generated by a constant function, the formula $\xi_\sigma(V)$ states that $V$ is not affected by interventions on %different
 other variables. If $V\in\Con(\F)$, then $V$ has both an $\eta_\sigma$ and a $\xi_\sigma$ clause.  Overall, the formula $\Phi^{\mathcal F}$ is satisfied in a team $T$ if and only if every assignment in $T$ has a function component that is $\sim$-equivalent to $\mathcal F$. 
 This result is the crucial element for adapting the standard methods of team semantics to the causal context.
 %We now verify this property. %in the next theorem. %\todoy{can you update the explanation - formula changed}

%If a generalized causal team $T$ satisfies $\Phi^\mathcal F$, then the first conjunct enforces that, if $V$ is endogenous according to $\mathcal F$, then $V$ obeys to the function $\mathcal F(V)$ in all members of $T$. The second conjunct enforces that, if $V$ is exogenous according to $\mathcal F$, then it is such also in each member of $T$. Overall, the formula makes so that, for each $(s,\mathcal G)\in T$, $\mathcal G\sim\mathcal F$; this is proved in lemma \ref{LEMPHIF} below.

\begin{teo}\label{LEMPHIF}
Let $\sigma$ be a signature, and $\mathcal F\in \FUN$.
\begin{enumerate}[(i)]
\item For any  generalized causal team $T$ over $\sigma$,  we have that

\begin{center}\(
T\models^g \Phi^\mathcal F  \iff \text{for all } (s,\mathcal G)\in T: \mathcal G \sim \mathcal F.  
\)
\end{center}
\item For any nonempty causal team $T = (T^-, \G)$ over $\sigma$, we have that

\begin{center}\(
T\models^c \Phi^\mathcal F  \iff \G \sim \F.  
\) \end{center}%\todoy{so perhaps $T$ does not correspond to $T^g$ in case $T^-=\emptyset$}\todob{What do you mean?}
\end{enumerate}
\end{teo}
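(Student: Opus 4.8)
The plan is to prove both parts by unwinding the definition of $\Phi^{\mathcal F}$ as a big conjunction of counterfactual (and selective-implication) formulas, and checking that each conjunct expresses the relevant local constraint on the function component. Since $\Phi^{\mathcal F}$ is a $\CO$-formula, by flatness (Theorem \ref{TEOGENDW}) satisfaction in a (generalized) causal team $T$ reduces to satisfaction in each singleton $\{(s,\mathcal G)\}$ (resp.\ $(\{s\},\mathcal G)$) with $(s,\mathcal G)\in T$; hence it suffices to prove the singleton case, namely that $\{(s,\mathcal G)\}\models \Phi^{\mathcal F}$ iff $\mathcal G\sim \mathcal F$. Part (i) then follows immediately, and part (ii) follows because a nonempty causal team $T=(T^-,\mathcal G)$ has $T^g=\{(s,\mathcal G)\mid s\in T^-\}$ with uniform function component $\mathcal G$, so ``$\mathcal G'\sim\mathcal F$ for all $(s,\mathcal G')\in T^g$'' collapses to ``$\mathcal G\sim\mathcal F$'' (using $T^-\neq\emptyset$), and then Lemma \ref{LEMIDENTIFY} transfers between $\models^c$ and $\models^g$.

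For the singleton case I would argue both directions by analyzing the two families of conjuncts. For the $\eta_\sigma(V)$ clauses with $V\in\End(\mathcal F)$: a conjunct $(\SET W=\SET w\land PA_V^{\mathcal F}=\SET p)\cf V=\mathcal F_V(\SET p)$ holds at $\{(s,\mathcal G)\}$ iff, after intervening to set \emph{all} variables other than $V$ (the ones in $PA_V^{\mathcal F}$ to $\SET p$, the rest to $\SET w$), the resulting value of $V$ equals $\mathcal F_V(\SET p)$. Since every parent and non-parent of $V$ has been fixed by the intervention, $V$ becomes the unique endogenous variable left among those, so its post-intervention value is exactly $\mathcal G_V$ evaluated at the fixed values of $PA_V^{\mathcal G}$ — i.e.\ at the sub-tuple of $(\SET p,\SET w)$ picked out by $PA_V^{\mathcal G}$ — provided $V\in\End(\mathcal G)$; if $V\notin\End(\mathcal G)$ then $V$ keeps the value $s(V)$, which is unconstrained, and one checks this forces a clash with some conjunct unless $\mathcal G_V$ were ``constant'' in a suitable sense. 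Ranging over all $\SET p\in\Ran(PA_V^{\mathcal F})$ and all $\SET w\in\Ran(\SET W)$, the conjunction $\eta_\sigma(V)$ holds iff $V\in\End(\mathcal G)$ and $\mathcal G_V(\SET q)=\mathcal F_V(\SET p)$ whenever $\SET q$ and $\SET p$ agree on $PA_V^{\mathcal F}\cap PA_V^{\mathcal G}$ — which is precisely the definition of $\mathcal F_V\sim\mathcal G_V$ together with $V\in\End(\mathcal G)$. For the $\xi_\sigma(V)$ clauses with $V\in(\Dom\setminus\End(\mathcal F))\cup\Con(\mathcal F)$: the conjunct $V=v\supset(\SET W_V=\SET w\cf V=v)$ holds at $\{(s,\mathcal G)\}$ iff either $s(V)\neq v$, or after setting every variable except $V$ to the values $\SET w$ the value of $V$ is still $v=s(V)$; taking $v=s(V)$ this says $V$ is immune to interventions on all other variables, which holds iff $V\in\Exo(\mathcal G)$ or $V\in\Con(\mathcal G)$. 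Conversely one checks that $\mathcal G\sim\mathcal F$ — i.e.\ $\End(\mathcal F)\setminus\Con(\mathcal F)=\End(\mathcal G)\setminus\Con(\mathcal G)$ with $\sim$-equal functions on that set — makes every conjunct true: the $\eta$-clauses by the computation above, and the $\xi$-clauses because each $V\in(\Dom\setminus\End(\mathcal F))\cup\Con(\mathcal F)$ lies in $(\Dom\setminus\End(\mathcal G))\cup\Con(\mathcal G)$ and such variables are unaffected by interventions elsewhere.

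Assembling: $\{(s,\mathcal G)\}\models\Phi^{\mathcal F}$ iff (a) every $V\in\End(\mathcal F)$ is in $\End(\mathcal G)$ with $\mathcal F_V\sim\mathcal G_V$, and (b) every $V\in(\Dom\setminus\End(\mathcal F))\cup\Con(\mathcal F)$ is in $(\Dom\setminus\End(\mathcal G))\cup\Con(\mathcal G)$. I would then check these two conditions are jointly equivalent to $\mathcal G\sim\mathcal F$: (a) gives $\End(\mathcal F)\subseteq\End(\mathcal G)$ and $\mathcal F_V\sim\mathcal G_V$ there; (b) gives $\Exo(\mathcal F)\subseteq\Exo(\mathcal G)\cup\Con(\mathcal G)$ and $\Con(\mathcal F)\subseteq\Exo(\mathcal G)\cup\Con(\mathcal G)$; note a variable in $\End(\mathcal F)\cap\Con(\mathcal F)$ is covered by both an $\eta$ and a $\xi$ clause, and one has to see that $\mathcal F_V\sim\mathcal G_V$ with $\mathcal F_V$ constant forces $\mathcal G_V$ constant (hence $V\in\Con(\mathcal G)$), so that $\End(\mathcal F)\setminus\Con(\mathcal F)\subseteq\End(\mathcal G)\setminus\Con(\mathcal G)$, and symmetrically the reverse inclusion, giving equality and then the required equivalence.

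The main obstacle I expect is the careful bookkeeping around constant functions and the interaction of the $\eta_\sigma$ and $\xi_\sigma$ clauses for variables in $\Con(\mathcal F)$: showing that the $\xi$-clause genuinely forces ``exogenous-or-constant in $\mathcal G$'' rather than merely ``exogenous'', and that $\mathcal F_V\sim\mathcal G_V$ propagates constancy in both directions, so that the two local conditions reassemble exactly into the definition of $\sim$ (which is phrased via $\End\setminus\Con$). The counterfactual computations themselves are routine once one notes that an intervention fixing all variables but $V$ reduces the post-intervention value of $V$ to a single application of $\mathcal G_V$ (or to $s(V)$ if $V\notin\End(\mathcal G)$), since no descendants other than $V$ remain to be recomputed.
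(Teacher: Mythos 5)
Your overall strategy is the paper's: reduce to singletons via flatness, compute the effect of the total interventions in the $\eta_\sigma$ and $\xi_\sigma$ conjuncts, and reassemble the local conditions into $\G\sim\F$. But the assembly step contains a concrete error exactly at the point you flag as the ``main obstacle''. You claim that $\eta_\sigma(V)$ holds at $\{(s,\G)\}$ iff $V\in\End(\G)$ and $\G_V\sim\F_V$, and consequently that $\Phi^{\F}$ forces (a) every $V\in\End(\F)$ to lie in $\End(\G)$. This is false when $\F_V$ is constant with value $c$: if $V$ is exogenous in $\G$ and $s(V)=c$, then after any intervention fixing all other variables $V$ retains the value $c=\F_V(\SET p)$, so every conjunct of $\eta_\sigma(V)$ (and of $\xi_\sigma(V)$) is satisfied even though $V\notin\End(\G)$. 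Worse, your conditions (a) and (b) taken together would force $\Con(\F)\subseteq\Con(\G)$ (since (a) puts $V$ in $\End(\G)$ and (b) puts it in $\Exo(\G)\cup\Con(\G)$), which neither follows from $\Phi^{\F}$ nor is required by $\G\sim\F$ — the relation $\sim$ deliberately compares only $\End\setminus\Con$. So the two local conditions you assemble are \emph{not} equivalent to $\G\sim\F$, and the forward direction of your proof, taken literally, derives a false intermediate statement.

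The repair is small and is what the paper does: from $\eta_\sigma(V)$ conclude $V\in\End(\G)\setminus\Con(\G)$ only for $V\in\End(\F)\setminus\Con(\F)$, where non-constancy of $\F_V$ supplies two interventions forcing two \emph{distinct} values of $V$, ruling out both $V$ exogenous and $V\in\Con(\G)$; from $\xi_\sigma(V)$ (applied with $v=s(V)$) conclude the reverse inclusion $\End(\G)\setminus\Con(\G)\subseteq\End(\F)\setminus\Con(\F)$ by contraposition; and verify $\F_V\sim\G_V$ only on the set $\End(\F)\setminus\Con(\F)$. Your computations of the post-intervention value of $V$ (a single application of $\G_V$ to the fixed parent values, or $s(V)$ if $V\notin\End(\G)$), your converse direction, and your derivation of part (ii) from part (i) via Lemma \ref{LEMIDENTIFY} are all in line with the paper and fine as stated.
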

%\begin{proof}
%See Appendix.
%\end{proof}
\begin{proof}
(i). $\Longrightarrow$: Suppose $T\models^g \Phi^\mathcal F$ and $(s,\mathcal G)\in T$. We show $\mathcal G \sim \mathcal F$.
$\End(\mathcal F)\setminus \Con(\mathcal F) \subseteq \End(\mathcal G)\setminus \Con(\mathcal G)$: For any $V\in \End(\mathcal F)\setminus \Con(\mathcal F)$, there are distinct $\SET p,\SET p'\in \Ran(PA_V^{\mathcal F})$ such that $\mathcal F_V(\SET p) \neq \mathcal F_V(\SET p')$. Since $T\models\eta_\sigma(V)$, for any $\SET w \in Ran(\SET W)$, we have that 
\begin{center}$
\begin{array}{l}
\{(s,\mathcal G)\} \models (\SET W = \SET w \land PA_V^\mathcal F = \SET p) \cf V=\mathcal F_V(\SET p),\\
\{(s,\mathcal G)\} \models (\SET W = \SET w \land PA_V^\mathcal F =\SET p') \cf V=\mathcal F_V(\SET p').
\end{array}
$
\end{center}
Thus,
\( s_{\SET W = \SET w \land PA_V^{\mathcal F} = \SET p}(V)=\mathcal F_V(\SET p)\neq \mathcal F_V(\SET p')=s_{\SET W = \SET w \land PA_V^{\mathcal F}= \SET p'}(V).\)
%\todoy{do we conclude at this point that $V\in\End(\G)$? Otherwise the notation $PA^{\G}_V$ won't make sense.}
So, $V\notin \Con(\G)$, and furthermore $V$ is not exogenous (since the value of an exogenous variable is not affected by interventions on different variables). Thus, $V\in \End(G)\setminus \Con(\G)$.

$\End(\mathcal G)\setminus \Con(\mathcal G) \subseteq \End(\mathcal F)\setminus \Con(\mathcal F)$: For any $V\in \End(\mathcal G)\setminus \Con(\mathcal G)$, there are distinct $\SET p,\SET p'\in Ran(PA_V^{\mathcal G})$ such that $\mathcal G_V(\SET p) \neq \mathcal G_V(\SET p')$.  Now, if $V\notin \End(\mathcal F)\setminus \Con(\F)$, then $T\models\xi_\sigma(V)$. Let $v=s(V)$ and $\SET Z=\SET W_V \setminus PA_V^\mathcal{G}$. Since $\{(s,\mathcal G)\}\models V=v$ and $V\notin PA_V^{\mathcal G}$, for any  $\SET z \in \Ran(\SET Z)$, we have that 
\begin{center}$
\begin{array}{l}
\{(s,\mathcal G)\} \models (\SET Z = \SET z \land PA_V^\mathcal G = \SET p) \cf V=v,\\
\{(s,\mathcal G)\} \models (\SET Z = \SET z \land PA_V^\mathcal G =\SET p') \cf V=v.
\end{array}
$\end{center}
By the definition of intervention, we must have that
\( v=s_{\SET Z = \SET z \land PA_V^{\mathcal G} = \SET p}(V)=\mathcal G_V(\SET p)\neq \mathcal G_V(\SET p')=s_{\SET Z = \SET z \land PA_V^{\mathcal G}= \SET p'}(V)=v,\)
which is impossible. Hence, $V\in \End(\mathcal F)\setminus \Con(\F)$.

%Now, let us prove that $V\notin Con(F)$. For the sake of contradiction, assume $V\in Con(F)$. Then $\{(s,\G)\}\models\xi_\sigma(V)$; in particular, $\{(s,\G)\}\models \SET W_V = \SET w\cf V=v$ for any $\SET w$ such that $\SET w_{\upharpoonright PA_V^\mathcal G = \SET p}$, and also in case $\SET w_{\upharpoonright PA_V^\mathcal G} = \SET p'$; so $v= G(V)(\SET p)\neq \mathcal G(V)(\SET p')  =v$, and we have a contradiction. 
%\todoy{Why is it so that $V\notin Con(\mathcal F)$??}
%\todob{Hope now it works.}

$\mathcal F_V \sim \mathcal G_V$ for any $V\in \End(\mathcal F)\setminus \Con(\mathcal F)$: For any $\SET x\in \Ran(PA_V^{\mathcal F}\cap PA_V^{\mathcal G})$, $\SET y\in \Ran(PA_V^{\mathcal F}\setminus PA_V^{\mathcal G})$ and  $\SET z \in \Ran(PA_V^{\mathcal G}\setminus PA_V^{\mathcal F})$,  since $T\models\eta_\sigma(V)$ and $V\notin PA_V^{\G}$, for any $\SET w\in \Ran(\SET W)$ with $\SET w\upharpoonright (PA_V^{\mathcal G}\setminus PA_V^{\mathcal F}) = \SET z$, we have that
\begin{center}\(\{(s,\mathcal G)\}\models (\SET W = \SET w \land PA_V^{\mathcal F} = \SET x\SET y)\cf V = \mathcal F_V(\SET x\SET y).\)
\end{center}
Then % \todoy{Here we need to assume certain ordering for the arguments of $\mathcal G$.}
\( \mathcal F_V(\SET x\SET y)= s_{\SET W = \SET w \land PA_V^{\mathcal F} = \SET x\SET y}(V)=\mathcal G_V(s_{\SET W = \SET w \land PA_V^{\mathcal F}= \SET x\SET y}(PA_V^\mathcal G)) =\mathcal G_V(\SET x\SET z),\)
as required. %\todoy{I did not seem to use the assumption $V\notin Con(\mathcal F)$. }

$\Longleftarrow$: Suppose that $\mathcal G \sim \mathcal F$ for all $(s,\mathcal G)\in T$. Since the formula $\Phi^{\F}$ is flat, it suffices to show that $\{(s,\mathcal G)\}\models \eta_\sigma(V)$ for all $V\in \End(\mathcal F)$, and $\{(s,\mathcal G)\}\models \xi_\sigma(V)$ for all $V\in (\Dom\setminus \End(\mathcal F))\cup \Con(\F)$.

For the former, take any  $\SET w\in \Ran(\SET W)$ and $\SET p\in \Ran(PA_V^\mathcal F)$, and let  $\SET Z = \SET z$ abbreviate $\SET W = \SET w \land PA_V^{\mathcal F} = \SET p$. We show that $\{(s_{\SET Z = \SET z},\mathcal G_{\SET Z = \SET z})\}\models V=\mathcal F_V(\SET p)$. Since $\mathcal G \sim \mathcal F$, by Lemma \ref{LEMINTSIM1} we have that $ \mathcal G_{\SET Z = \SET z}\sim \mathcal F_{\SET Z = \SET z}$. Thus, 
%\begin{align*}
\begin{center}$\begin{array}{rlr}
s_{\SET Z = \SET z}(V)= (\mathcal G_{\SET Z = \SET z})_V(s_{\SET Z = \SET z}(PA_V^{\mathcal G_{\SET Z = \SET z}}))&= (\mathcal F_{\SET Z = \SET z})_V(s_{\SET Z = \SET z}(PA_V^{\mathcal F_{\SET Z = \SET z}}))&(\text{since }\mathcal G_{\SET X= \SET x} \sim \mathcal F_{\SET X= \SET x})
%\tag{since $\mathcal G_{\SET X= \SET x} \sim \mathcal F_{\SET X= \SET x}$}
\\
& = \mathcal F_V(s_{\SET Z = \SET z}(PA_V^{\mathcal F}))&(\text{since } V\notin \SET Z)\\
%\tag{since $V\notin \SET Z$}
%\\
&=\mathcal F_V(\SET p).&
\end{array}
$\end{center}
%\end{align*}

For the latter, take any $v\in \Ran(V)$ and $\SET w\in \Ran(\SET W_V)$. Assume that $\{(s,\mathcal G)\}\models V=v$, i.e., $s(V)=v$. Since $V\notin \End(\mathcal F)\setminus \Con(\F)$ and $\mathcal F\sim\mathcal G$, we know that $V\notin \End(\mathcal G)$ or $V\in \Con(\mathcal G)$. In both cases we have that $\{(s,\mathcal G)\}\models \SET W_V=\SET w \cf V=v$.

(ii). Let $T$ be a nonempty causal team. Consider its associated generalized causal team $T^g$. The claim then follows from Lemma \ref{LEMIDENTIFY} and item (i). %The assumption $T$ is nonempty is needed to apply Lemma \ref{LEMIDENTIFY}(ii). 
%ii) Given $T$ causal team of signature $\sigma$, let $T^{gen}:= \{ (s,\F_T) \mid s\in T^- \}$. Then $T = (T^{gen})_\F$ (see definition \ref{DEFCORRCT}). But then, by lemma \ref{LEMIDENTIFY}, $T\models \Phi^\F \iff T^{gen}\models \Phi^\F$. But by part 1) of the theorem, $T^{gen}\models \Phi^\F$ iff $\F_T \sim \F$.
\end{proof}

\begin{coro}\label{LEMCHARCT}
For any generalized causal team  $T$ over some signature $\sigma$,

\begin{center}\(
\displaystyle T\models \bigvvee_{\mathcal F \in \FUN}\Phi^\mathcal F\iff  T \text{ is uniform}.  
\)\end{center}
\end{coro}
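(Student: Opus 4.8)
The plan is to prove the biconditional directly, using Theorem \ref{LEMPHIF}(i) to unpack the satisfaction of each disjunct $\Phi^{\mathcal F}$ together with the semantic clause for $\vvee$.

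For the direction from left to right, suppose $T\models \bigvvee_{\mathcal F\in\FUN}\Phi^{\mathcal F}$. Since $\FUN$ is finite, the clause for $\vvee$ (iterated) gives some $\mathcal F\in\FUN$ with $T\models^g\Phi^{\mathcal F}$. By Theorem \ref{LEMPHIF}(i), this means $\mathcal G\sim\mathcal F$ for every $(s,\mathcal G)\in T$. Then for any two elements $(s,\mathcal G),(t,\mathcal H)\in T$ we have $\mathcal G\sim\mathcal F\sim\mathcal H$, and since $\sim$ is an equivalence relation (as noted after Definition \ref{DEFSIMFG}), $\mathcal G\sim\mathcal H$. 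Hence $T$ is uniform.

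For the converse, suppose $T$ is uniform. If $T=\emptyset$, then by the empty team property (Theorem \ref{TEOGENDW}) $T$ satisfies every formula, in particular $\bigvvee_{\mathcal F}\Phi^{\mathcal F}$ (one should check the empty team property applies here: $T^-=\emptyset$ when $T=\emptyset$), so we may assume $T\neq\emptyset$ and fix some $(s_0,\mathcal F_0)\in T$. By uniformity, $\mathcal G\sim\mathcal F_0$ for every $(s,\mathcal G)\in T$. By Theorem \ref{LEMPHIF}(i), this gives $T\models^g\Phi^{\mathcal F_0}$, and since $\mathcal F_0\in\FUN$ is one of the disjuncts, the clause for $\vvee$ yields $T\models\bigvvee_{\mathcal F\in\FUN}\Phi^{\mathcal F}$.

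I do not expect a genuine obstacle here; the corollary is essentially a repackaging of Theorem \ref{LEMPHIF}(i) via the fact that $\sim$ is an equivalence relation and the disjunctive reading of $\vvee$ over a finite index set. The only point requiring a moment's care is the empty-team (or more generally the $T\neq\emptyset$) bookkeeping in the right-to-left direction, to make sure we have an actual element $(s_0,\mathcal F_0)$ to name the witnessing disjunct; this is handled by a one-line appeal to the empty team property.
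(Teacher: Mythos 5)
Your proof is correct and is exactly the argument the paper intends: the corollary is stated without proof as an immediate consequence of Theorem \ref{LEMPHIF}(i), unpacked via the finite disjunction clause for $\vvee$, transitivity of $\sim$, and the empty-team case, just as you do.
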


The intuituionistic disjunction $\vvee$ was shown to have the {\em disjunction property}, i.e., $\models \varphi\vvee\psi$ implies $\models\varphi$ or $\models\psi$, in propositional inquisitive logic (\cite{Cia2016b}) and  propositional dependence logic (\cite{YanVaa2016}).
It follows immediately from Theorem \ref{LEMPHIF} that the disjunction property of $\vvee$ fails in the context of causal teams, because  $\models^c\bigvvee_{\mathcal F \in \FUN}\Phi^\mathcal F$, whereas $\not\models^c\Phi^\mathcal F$ for any $\mathcal F \in \FUN$. Nevertheless, the intuitionistic disjunction  does admit the disjunction property over generalized causal teams.%, as we show in the next theorem.

\begin{teo}[Disjunction property]%[Splitting property]
\label{splitting_prop}
Let $\Delta$ be a set of $\CO$-formulas, and $\varphi,\psi$ be arbitrary formulas over $\sigma$. If $\Delta\models^{g}\varphi\vvee\psi$, then $\Delta\models^{g}\varphi$ or $\Delta\models^{g}\psi$. %\todoy{I usually use $\Delta$ to stand for a set of standard formulas, or in this case $\CO$-formulas.}
In particular, if $\models^{g}\varphi\vvee\psi$, then $\models^{g}\varphi$ or $\models^{g}\psi$.  %\todoy{Now I tend to think we need to write the superscript gct, at least in some context in this section.}
\end{teo}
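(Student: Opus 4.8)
The key idea is that a set $\Delta$ of $\CO$-formulas is \emph{flat}, so its semantics reduces to the behavior on singletons, and on singleton generalized causal teams the intuitionistic disjunction $\vvee$ behaves classically. First I would suppose toward a contradiction that $\Delta \not\models^g \varphi$ and $\Delta \not\models^g \psi$. Then there are generalized causal teams $T_1$ and $T_2$ with $T_1 \models^g \Delta$ but $T_1 \not\models^g \varphi$, and $T_2 \models^g \Delta$ but $T_2 \not\models^g \psi$. The natural move is to combine $T_1$ and $T_2$ into a single generalized causal team $T := T_1 \cup T_2$ (set-theoretic union, which is available for generalized causal teams, unlike for causal teams — this is exactly the point where the generalized semantics buys us something).

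Next I would check that $T \models^g \Delta$: since every $\beta \in \Delta$ is a $\CO$-formula, flatness (Theorem \ref{TEOGENDW}) gives $T \models^g \beta$ iff $\{(s,\F)\} \models^g \beta$ for all $(s,\F) \in T = T_1 \cup T_2$, and each such singleton lies in $T_1$ or in $T_2$, both of which satisfy $\beta$ by flatness again. Hence $T \models^g \Delta$, so by the hypothesis $\Delta \models^g \varphi \vvee \psi$ we get $T \models^g \varphi \vvee \psi$, i.e.\ $T \models^g \varphi$ or $T \models^g \psi$. But $T_1 \subseteq T$ and $T_2 \subseteq T$, so by downward closure (Theorem \ref{TEOGENDW}) the first disjunct forces $T_1 \models^g \varphi$ and the second forces $T_2 \models^g \psi$ — either way contradicting our choice of $T_1, T_2$. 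This proves $\Delta \models^g \varphi$ or $\Delta \models^g \psi$; taking $\Delta = \emptyset$ yields the ``in particular'' clause.

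The main subtlety — and the reason this works over generalized causal teams but \emph{not} over causal teams — is that the union step requires a genuine set-theoretic union of the semantic objects; for ordinary causal teams $T_1 \cup T_2$ would need a common function component, which need not exist, and indeed the excerpt already notes (after Theorem \ref{LEMPHIF}) that the disjunction property fails over causal teams via $\bigvvee_{\F} \Phi^\F$. So the only real content to verify carefully is the two applications of flatness for $\Delta$ (which is why $\Delta$ must consist of $\CO$-formulas and not arbitrary $\COV$-formulas) and the downward-closure step; both are immediate from Theorem \ref{TEOGENDW}. I do not expect any genuine obstacle beyond making sure the flatness hypothesis on $\Delta$ is used in both directions.
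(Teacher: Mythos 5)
Your proposal is correct and is essentially identical to the paper's own proof: both take the set-theoretic union $T = T_1 \cup T_2$ of the two counterexample teams, use flatness of $\Delta$ to get $T \models^g \Delta$, and use downward closure to refute $T \models^g \varphi\vvee\psi$ (the paper phrases this contrapositively, you phrase it as a contradiction, but the content is the same). Your remarks on why the union step is only available for generalized causal teams match the paper's own discussion of the failure of the disjunction property over causal teams.
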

\begin{proof}
Suppose $\Delta\not\models^{g}\varphi$ and $\Delta\not\models^{g}\psi$. Then there are two generalized causal teams $T_1,T_2$ such that $T_1\models \Delta$, $T_2\models \Delta$, $T_1\not\models\varphi$ and $T_2\not\models\psi$. Let $T:= T_1\cup T_2$. %\todoy{How about this, instead of the next paragraph:} 
By flatness of $\Delta$, we have that $T\models\Delta$. On the other hand, by downwards closure, we have that $T\not\models\varphi$ and $T\not\models\psi$, and thus $T\not\models\varphi\vvee \psi$.
%Suppose for the sake of contradiction that $T\models \psi\vvee\chi$. Then $T$ satisfies at least one of the disjuncts; without loss of generality, suppose $T\models\psi$. But then, by downward closure (Theorem \ref{TEOGENDW}) $T_1\models\psi$: a contradiction. So, $T\not\models \psi\vvee\chi$. 
\end{proof}

\section{Characterizing $\Co$}\label{SECCODED}

In this section, we %study the logic $\Co$. We first
characterize the expressive power of $\Co$ over causal teams and present a system of natural deduction for $\Co$ that is sound and complete over both causal teams and generalized causal teams. %\todoy{will rewrite}

\subsection{Expressivity}\label{SECCOEXPR}

In this subsection, we %characterize the expressive power of $\Co$-formulas. We 
show that $\Co$-formulas capture the flat class of causal teams (up to $\approx$-equivalence). % of function components).
Our result is analogous to known characterizations of flat languages in propositional team semantics (\cite{YanVaa2017}), with a twist, given by the fact that only the unions of similar causal teams are reasonably defined. We define such unions as follows.

\begin{df}\label{union_ct_df}
%\todoy{I put this definition here for now, may move to somewhere else}\todob{Later! Too scary a definition.}
Let $S=(S^-,\F),T=(T^-,\G)$ be two causal teams over the same signature $\sigma$ with $S\sim T$. The union of $S$ and $T$ is defined as the causal team $S\cup T=(S^-\cup T^-,\mathcal H)$ over $\sigma$, where %\todoy{not sure about this definition: how about $\mathcal H$ on $\Con(\F)$ and $\Con(\G)$?}\todob{I modified def of En(H), PA and $H_V$. Does it work now?}
\begin{itemize}
\item $\End(\mathcal H)=(\End(\F)\setminus \Con(F))\cap (\End(\G)\setminus \Con(G))$,
%\item  and for each $V\in \End(\F)\cap \Con(\G)$, $PA^{\mathcal H}_V=PA^{\F}_V\cup PA^{\G}_V$, and
%\[\mathcal H_V(\SET p\SET x\SET y)=\F_V(\SET p\SET x)\]

\item  and for each $V\in \End(\mathcal H)$, $PA^{\mathcal H}_V=PA^{\F}_V\cap PA^{\G}_V$, and
\(\mathcal H_V(\SET p)=\F_V(\SET p\SET x)\)
for any $\SET p\in PA_V^{\F}\cap PA_V^{\G}$ and $\SET x \in PA_V^{\F}\setminus PA_V^{\G}$. %, $\SET y\in PA_V^{\G}\setminus PA_V^{\F}$.
\end{itemize}
\end{df}
Clearly, $\mathcal H\sim\F\sim\G$ and thus $S\cup T\sim S\sim T $.%\todoy{needed}%\todob{Line needed?}

%In this section we characterize the expressive power of the languages in terms of what classes of (generalized) causal teams they can define. In the first subsection we list the properties of classes of teams that are relevant for our classifications; in the second we present three special classes of formulas that will be useful throughiut the paper; the remaining six subsection give the characterization results for the languages $\Co$, $\COv$ and $\COd$ over both causal team semantics and its generalization. The characterization results for $\COd_{\vveescript}$ are easy corollaries of those for $\COv$, and we omit them.

%%%%%%%%%%%%%%%%%%%%%%%%%%%%%%%%%%%
%\subsection{Relevant properties of classes of teams}  \label{SUBSRELPROP}

%The following properties will be used to characterize the classes of causal teams (or generalized ones) that are definable by (sets of) formulas of our languages.

A formula $\varphi$ over %a signature 
$\sigma$ determines a class $\K_\varphi$ of %(generalized)
causal teams defined as
\begin{center}\(\K_\varphi=\{T\mid T\models\varphi\}.\)\end{center}
We say that a formula $\varphi$ \textbf{defines} a class $\K$ of  %(generalized) 
causal teams if $\K=\K_\varphi$.

\begin{df}
We say that  a class  $\K$ of %(generalized) 
causal teams over  $\sigma$ is
\begin{itemize}
\item \textbf{causally downward closed} if $T\in \K$ and $S\subseteq T$ imply $S\in\K$;

\item \textbf{closed under causal unions} if, whenever  $T_1,T_2\in\K$ and $T_1\cup T_2$ is defined, %(which is always the case for generalized causal teams) 
 $T_1\cup T_2\in \K$;%\todoy{maybe call it ``closed under causal unions"??}
%\todoy{``whenever $\K'\subseteq \K$, $\bigcup \K' \in \K$." I now prefer to use the alternative (finite) definition for union closure}

\item \textbf{flat} if $(T^-,\F)\in \K$ iff $(\{s\},\F)\in \K$ for all $s\in T^-$; %\todoy{notation needs to be specified}
\item \textbf{closed under  equivalence}  if $T\in \K$ and $T\approx T'$ imply $T'\in \K$. %\todoy{``causal equivalence" instead? more specific}
\end{itemize}
\end{df}
%Recall that the relations $\subseteq$ and $\approx$ have different meanings in causal teams and generalized causal teams. 
It is easy to verify that $\K$ is flat iff $\K$ is causally downward closed and closed under causal unions. Any nonempty downward closed class $\K$ of causal teams over $\sigma$  contains %the empty generalized causal team $\emptyset$ (resp.
all causal teams over $\sigma$ with empty team component. %).\todoy{I added this. Need to define the empty team}\todob{Maybe not.}. 
The class $\K_\varphi$ is always nonempty as the teams with empty team component are always in $\K_\varphi$ (by Theorem \ref{TEOGENDW}). By Theorems \ref{TEOGENDW} and \ref{PROPEQUIV}, if $\alpha$ is a $\Co$-formula, then $\K_\alpha$ is flat and closed under equivalence.
The main result of this section is the following characterization theorem which gives also the converse direction.% for both generalized causal teams and causal teams.

\begin{teo}\label{TEOCHARCOCT}
Let $\K$ be a nonempty (finite) class of %(generalized) 
causal teams over some signature $\sigma$. Then $\K$ is definable by a $\CO$-formula if and only if $\K$ is flat % (i.e., causally downward closed and closed under unions) 
and closed under equivalence.
%\begin{enumerate}
%\item $\K$ is causally downward closed.
%\item $\K$ is union closed.
%\item $\K$ is closed under equivalence.
%%\item $\K\neq \emptyset$. \todob{Isn't this already implied by 2.? Then we could omit it (with slight changes to the proof).}
%\end{enumerate}
\end{teo}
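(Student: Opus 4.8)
The plan is to prove the two directions of the biconditional. The left-to-right direction (if $\K$ is $\CO$-definable then it is flat and closed under equivalence) is already noted in the paragraph preceding the theorem: it follows from the flatness of $\CO$-formulas (Theorem \ref{TEOGENDW}) together with closure under causal equivalence (Theorem \ref{PROPEQUIV}), since $\K_\alpha$ is flat and closed under $\approx$. So the real work is the converse: given a nonempty finite flat class $\K$ of causal teams over $\sigma$ closed under equivalence, construct a $\CO$-formula $\alpha$ with $\K = \K_\alpha$.

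The key idea is that, because $\K$ is flat, $\K$ is completely determined by the set of singleton causal teams it contains, and a singleton causal team $(\{s\},\F)$ is (up to $\approx$) just a deterministic causal model, which can be described by a single $\CO$-formula. Concretely, first I would associate to each singleton causal team $(\{s\},\F)$ a characteristic $\CO$-formula $\chi_{s,\F} := \Phi^{\F} \wedge \bigwedge_{X \in \Dom} X = s(X)$, using the formula $\Phi^{\F}$ from Theorem \ref{LEMPHIF}. I then check that for any causal team $T = (T^-,\G)$ and any $s \in T^-$, we have $(\{s\},\G) \models \chi_{s,\F}$ iff $\G \sim \F$ (by Theorem \ref{LEMPHIF}(ii), noting the conjunct $\bigwedge X = s'(X)$ pins down the single assignment), and more generally $T \models \chi_{s,\F}$ iff $T^- = \{s\}$ and $\G \sim \F$. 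Since $\sigma$ is finite, there are only finitely many systems of functions in $\FUN$ and finitely many assignments in $\ASS$, so there are only finitely many $\sim$-equivalence classes of singleton causal teams, and I can form $\alpha := \bigvee\{\chi_{s,\F} \mid (\{s\},\F) \in \K\}$ (a finite disjunction; if $\K$ contains only teams with empty team component, take $\alpha := \bot$, which defines exactly that class by the empty team property and flatness). Here I should pick one representative $(s,\F)$ per $\sim$-class to keep the disjunction well-defined, and I must be a little careful that $\chi_{s,\F}$ is genuinely a $\CO$-formula — $\Phi^{\F}$ is built from counterfactuals, conjunctions, and selective implications $\supset$, which is itself $\CO$-definable as $\neg\alpha \lor \varphi$, so this is fine.

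To finish, I verify $\K = \K_\alpha$. For $\subseteq$: if $T = (T^-,\G) \in \K$, then by flatness $(\{s\},\G) \in \K$ for each $s \in T^-$, hence some representative $(s',\F)$ with $\F \sim \G$ and $s' = s$ has $\chi_{s,\F}$ in the disjunction (using closure under equivalence to get the chosen representative into $\K$, or rather to know the disjunct's truth is insensitive to the choice), so $(\{s\},\G) \models \chi_{s,\F}$, and since $T$ is the causal union of these singletons, $T \models \alpha$ by downward closure going back up — more simply, $T \models \alpha$ because $\lor$ in team semantics lets us split $T^-$ into the singletons and each satisfies its disjunct. For $\supseteq$: if $T \models \alpha$, then $T^- = \bigcup_i T_i^-$ where each $T_i$ (a causal subteam of $T$, same function component $\G$) satisfies some disjunct $\chi_{s_i,\F_i}$; by the characterization of $\chi$, each $T_i^- = \{s_i\}$ and $\G \sim \F_i$, so $(\{s_i\},\F_i) \in \K$ and $(\{s_i\},\F_i) \approx (\{s_i\},\G)$, whence $(\{s_i\},\G) \in \K$ by closure under equivalence; then $T = (T^-,\G)$ is the causal union of the $(\{s_i\},\G)$, so $T \in \K$ by closure under causal unions (equivalently, flatness). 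The main obstacle I anticipate is bookkeeping around $\sim$-equivalence classes and the dummy-argument subtleties: I must make sure $\chi_{s,\F}$ does not distinguish $\sim$-equivalent function components (guaranteed by Theorem \ref{LEMPHIF}) and that the empty-team-component case is handled, but there is no deep difficulty beyond matching up the flatness of $\K$ with the team-semantic behavior of $\lor$ and the characterization formulas $\Phi^{\F}$.
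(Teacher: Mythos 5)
Your proof is correct and follows essentially the same route as the paper's: the key ingredients are identical (Theorem \ref{LEMPHIF} for $\Phi^{\F}$, the assignment-describing conjunctions $\bigwedge_{X\in\Dom}X=s(X)$, and a disjunction of ``function-component description $\wedge$ team-component description'' blocks, verified via flatness and closure under equivalence). The only difference is organizational: you enumerate the singleton teams of $\K$ directly, whereas the paper first forms, for each similarity class, the maximal team $T_{\F}=\bigcup\K^{\F}$ via causal unions and uses $\Theta^{T_{\F}}\wedge\Phi^{\F}$; since your $\bigwedge_{X\in\Dom}X=s(X)$ are exactly the disjuncts of $\Theta^{T_{\F}}$ and $\Phi^{\F}$ is flat, the two defining formulas are equivalent.
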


%The rest of this section is devoted to the proof of the above theorem. For this purpose 
In order to prove the above theorem, we  introduce a $\Co$-formula $\Theta^T$, inspired by a similar one in \cite{YanVaa2016}, that defines the property ``having as team component a subset of $T^-$''. %The formula $\Theta^T$ is inspired by a similar formula in  \cite{YanVaa2016}. 
For each causal team $T$ over $\sigma=(\Dom,\Ran)$, define
\begin{center}\(\displaystyle
\Theta^T := \bigvee_{s\in T^-} \bigwedge_{V\in \Dom} V = s(V). %\footnote{Note that if $T^- = \varnothing$, then $\Theta^T =  \bigvee\emptyset \ = \ \bot$.}
\)\end{center}
%Note that if $T^- = \varnothing$, then $\Theta^T =  \bigvee\emptyset \ = \ \bot$.
%\todob{I changed spacing. It looked like = had precedence over $\bigvee$.}

%\todob{We should say say somewhere what $\bot$ is.}
%***We should add $\perp$ to the languages!
%Otherwise, assuming $X\in \Dom, x\in \Ran(X)$, use $X=x \land X\neq x$.***

\begin{lm} \label{LEMCHARSUB_ct}
%we have that
\(S\models\Theta^T\) iff \(S^-\subseteq T^-\), for any  causal teams $S,T$ over $\sigma$.
%A nonempty causal team $S$ of signature $\sigma$ satisfies $\Theta^T$ if and only if $S^-\subseteq T^-$. The same holds for generalized causal teams.
%\todob{Remove ``nonempty'' if we keep the $\bot$ clause in the previous definition.}
\end{lm}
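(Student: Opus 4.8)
The plan is to reduce the claim to singleton causal teams using flatness. Since $\Theta^T$ is built from atomic equations using only $\wedge$ and $\vee$, it is a $\Co$-formula, hence flat by Theorem \ref{TEOGENDW}: for any causal team $S=(S^-,\F)$ we have $S\models\Theta^T$ iff $(\{s\},\F)\models\Theta^T$ for every $s\in S^-$. So it suffices to establish the auxiliary claim that a singleton causal team $(\{s\},\F)$ satisfies $\Theta^T$ if and only if $s\in T^-$; the lemma then follows at once, since $S^-\subseteq T^-$ means precisely that $s\in T^-$ for all $s\in S^-$. (Note that the function component of $T$ is irrelevant throughout, as $\Theta^T$ depends on $T$ only via $T^-$, so we need not assume $S\sim T$.)

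For the auxiliary claim I would argue both directions directly from the semantics of $\vee$. If $s\in T^-$, then the disjunct $\psi_s:=\bigwedge_{V\in\Dom}V=s(V)$ is satisfied by $(\{s\},\F)$, since each conjunct $V=s(V)$ holds on the assignment $s$. To obtain $(\{s\},\F)\models\bigvee_{s'\in T^-}\psi_{s'}$ one iterates the clause for $\vee$: assign the team component $\{s\}$ entirely to the disjunct indexed by $s$ and the empty team component to every other disjunct, each of which holds by the empty team property (Theorem \ref{TEOGENDW}). Conversely, if $(\{s\},\F)\models\Theta^T$, then iterating the $\vee$-clause yields causal subteams $S_{s'}\subseteq(\{s\},\F)$ indexed by $s'\in T^-$ whose team components cover $\{s\}$, with $(S_{s'}^-,\F)\models\psi_{s'}$. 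As $\{s\}\neq\emptyset$, some $S_{s'}^-$ is nonempty, hence equals $\{s\}$, and then $(\{s\},\F)\models\psi_{s'}$ forces $s(V)=s'(V)$ for all $V\in\Dom$, i.e.\ $s=s'\in T^-$.

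No genuine obstacle is expected. The only point requiring a word is the degenerate case $T^-=\emptyset$, where $\Theta^T$ is an empty disjunction: under the convention $\bigvee\emptyset=\bot$, a causal team satisfies $\bot$ exactly when its team component is empty, which agrees with $S^-\subseteq\emptyset$, so the statement holds in this case as well (alternatively one simply assumes $T^-\neq\emptyset$).
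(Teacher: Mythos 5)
Your proof is correct and follows essentially the same route as the paper: both directions reduce to singleton teams (you via flatness, the paper via downward closure) and the converse direction rests on splitting the team component over the disjuncts of $\Theta^T$ and discharging the unused disjuncts with the empty team property. Your explicit treatment of the degenerate case $T^-=\emptyset$ is a small addition the paper leaves implicit, but the substance of the argument is the same.
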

\begin{proof}
%We give the proof  only for causal teams; the generalized causal team case is analogous.
%\todob{Put the centered formulas inline so as to gain some lines?}\todoy{yes in some cases, but not in all cases, throughout the paper, will do}
``$\Longrightarrow$": Suppose $S\models\Theta^T$ and $S=(S^-,\F)$. %In case $S^-=\varnothing$, then obviously $S^-\subseteq T^-$. Otherwise, l
For any $s\in S^-$, by downward closure, we have that $(\{s\},\F)\models \Theta^T$, which means that for some $t\in T^-$, \((\{s\},\F)\models V = t(V)\) for all $V\in\Dom$.
%\[(\{s\},\F)\models \bigwedge_{V\in \Dom} V = t(V).\] 
Since $\{s\}$ and $\{t\}$ have the same signature, this implies that $s=t$, thereby $s\in T^-$.

``$\Longleftarrow$": Suppose $S^-\subseteq T^-$. Observe that %\todoy{This formula becomes too unreadable if not centered...}
%\(\displaystyle S\models\bigvee_{s\in S^-} \bigwedge_{V\in \Dom} V = s(V).\)
$S\models\Theta^{S}$ and $\Theta^T=\Theta^S\vee \Theta^{T\setminus S}$.
Thus, we conclude $S\models \Theta^T$ by the empty team property.
%and that $\varnothing \models \bigvee_{s\in T^-\setminus S^-} \bigwedge_{V\in \Dom} V = s(V)$. Since $S^-\cup\varnothing = S^-$, by the clause for $\lor$ we conclude $S^-\models\bigvee_{s\in T^-} \bigwedge_{V\in \Dom} V = s(V)$.
\end{proof}

%\begin{lm} \label{LEMCHARSUB}
%For any generalized causal teams $S,T$ over some signature $\sigma$, 
%\[S\models\Theta^T \iff S^-\subseteq T^-.\]
%%A nonempty causal team $S$ of signature $\sigma$ satisfies $\Theta^T$ if and only if $S^-\subseteq T^-$. The same holds for generalized causal teams.
%%\todob{Remove ``nonempty'' if we keep the $\bot$ clause in the previous definition.}
%\end{lm}
%
%\begin{proof}
%%We give the proof  only for causal teams; the generalized causal team case is analogous.
%``$\Longrightarrow$": Suppose $S\models\Theta^T$. %In case $S^-=\varnothing$, then obviously $S^-\subseteq T^-$. Otherwise, l
%For any $s\in S^-$ with $(s,\F)\in S$, by downward closure of $\Co$, we have that $\{(s,\F)\}\models \Theta^T$, which means that for some $t\in T^-$, 
%\[\{(s,\F)\}\models \bigwedge_{V\in \Dom} V = t(V).\] 
%Since $\{s\}$ and $\{t\}$ have the same signature, this implies that $s=t$, thereby $s\in T^-$.
%
%``$\Longleftarrow$": Conversely, suppose $S^-\subseteq T^-$. Observe that 
%\[S\models\bigvee_{s\in S^-} \bigwedge_{V\in \Dom} V = s(V).\] 
%Thus, we conclude $S\models \Theta^T$ by the empty team property.
%%and that $\varnothing \models \bigvee_{s\in T^-\setminus S^-} \bigwedge_{V\in \Dom} V = s(V)$. Since $S^-\cup\varnothing = S^-$, by the clause for $\lor$ we conclude $S^-\models\bigvee_{s\in T^-} \bigwedge_{V\in \Dom} V = s(V)$.
%\end{proof}

\begin{lm}\label{COROCHARSUB_ct}
Let $S=(S^-,\G)$ and $T=(T^-,\F)$ be  causal teams over $\sigma$ with $S^-,T^-\neq\emptyset$. Then %there exists a causal team $R$ over $\sigma$ such that %\todoy{I added the extra assumption that the two teams are nonempty, seems to need it here and also later}
\(%begin{equation}\label{COROCHARSUB_eq1}
S\models\Theta^{T}\wedge \Phi^{\F} \iff S\approx R\subseteq T\text{ for some }R\text{ over }\sigma.
\)%end{equation}
% \begin{center}\(%begin{equation}\label{COROCHARSUB_eq1}
% S\models\Theta^{T}\wedge \Phi^{\F} \iff S^-\subseteq T^-\text{ and }\G\sim \F\iff S\approx R\subseteq T\text{ for some }R\text{ over }\sigma.
% \)\end{center}%end{equation}

%Let $\sigma$ be a signature and $\F\in\FUN$.
%\begin{enumerate}[(i)]
%\item Let $S,T$ be generalized causal teams  over  $\sigma$. If $T=T^{\F}$, there exists a causal team $R$ over $\sigma$ such that
%\begin{equation}\label{COROCHARSUB_eq1}
%\begin{split}
%S\models\Theta^{T}\wedge \Phi^{\F} &\iff S^-\subseteq T^-,\text{ and }\text{ for all }(s,\G)\in S: \G\sim \F\\
%&\iff S\approx R\subseteq T\text{ for some }R.
%\end{split}
%\end{equation}
%% then
%%\begin{equation}\label{COROCHARSUB_eq1}
%%\[S\models\Theta^{T}\wedge \Phi^{\F} \iff S\subseteq T.\]
%%\end{equation}
%\item Let $S,T$ be causal teams  over $\sigma$. If $T=(T^-,\G)$ and $\F\sim \G$, then 
%\begin{equation}\label{COROCHARSUB_eq2}
%\begin{split}
%S\models\Theta^{T}\wedge \Phi^{\F} &\iff S^-\subseteq T^-,\text{ and } \G\sim \F\\
%&\iff S\approx R\subseteq T\text{ for some }R.
%\end{split}
%\end{equation}
%\end{enumerate}
\end{lm}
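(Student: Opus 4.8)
The plan is to prove the biconditional in Lemma \ref{COROCHARSUB_ct} by combining the two characterizations we already have at hand: Lemma \ref{LEMCHARSUB_ct}, which says $S\models\Theta^T$ iff $S^-\subseteq T^-$, and Theorem \ref{LEMPHIF}(ii), which says that for a nonempty causal team $S=(S^-,\G)$ we have $S\models^c\Phi^{\F}$ iff $\G\sim\F$. Putting these together, $S\models\Theta^T\wedge\Phi^{\F}$ is equivalent to the conjunction ``$S^-\subseteq T^-$ and $\G\sim\F$''. So the real content is to show that this conjunction is in turn equivalent to the statement ``$S\approx R\subseteq T$ for some causal team $R$ over $\sigma$''.

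For the right-to-left direction, I would assume $S\approx R$ with $R=(R^-,\F)\subseteq T=(T^-,\F)$ (note $R$ must have the same function component as $T$ by the definition of causal subteam). Then $R^-\subseteq T^-$, and since $S\approx R$ means $S\sim R$ and $S^-=R^-$, I get $S^-=R^-\subseteq T^-$, hence $S\models\Theta^T$ by Lemma \ref{LEMCHARSUB_ct}; moreover $S\sim R$ unfolds to $\G\sim\F$ (the function component of $R$ is $\F$, that of $S$ is $\G$), so $S\models\Phi^{\F}$ by Theorem \ref{LEMPHIF}(ii), using that $S^-\neq\emptyset$. Hence $S\models\Theta^T\wedge\Phi^{\F}$.

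For the left-to-right direction, assume $S\models\Theta^T\wedge\Phi^{\F}$. By Lemma \ref{LEMCHARSUB_ct}, $S^-\subseteq T^-$; by Theorem \ref{LEMPHIF}(ii) (applicable since $S^-\neq\emptyset$), $\G\sim\F$. Now I simply take $R:=(S^-,\F)$. I must check this is a legitimate causal team, i.e. that every $s\in S^-$ is compatible with $\F$: this is where I would use $\G\sim\F$ together with the fact that the assignments of $S$ are compatible with $\G$ — similar function components impose the same constraints on assignments (modulo the constant/dummy bookkeeping built into $\sim$), so compatibility transfers. Then $R\subseteq T$ since $R^-=S^-\subseteq T^-$ and $R$ has function component $\F$, matching $T$'s; and $S\approx R$ because $S^-=R^-$ and $\G\sim\F$ gives $S\sim R$.

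The main obstacle I anticipate is precisely the verification that $R=(S^-,\F)$ is a well-defined causal team, i.e. that compatibility of the assignments in $S^-$ with $\G$ implies their compatibility with $\F$ when $\G\sim\F$. This requires unpacking Definition \ref{DEFSIMFG}: for a non-constant endogenous variable $V$ the equivalence $\G_V\sim\F_V$ says the two functions agree once we strip dummy arguments, so $s(V)=\G_V(s(PA_V^\G))=\F_V(s(PA_V^\F))$; for $V\in\Con(\G)$ (or $V$ exogenous in $\G$) that has become endogenous non-constant in $\F$ — but this cannot happen, since $\sim$ forces $\End(\F)\setminus\Con(\F)=\End(\G)\setminus\Con(\G)$, and for the remaining endogenous-constant variables of $\F$ the defining value is, by $\sim$, compatible with whatever $s$ assigns. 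I would either spell this case analysis out or, more economically, observe that it is exactly the reasoning already carried out in the ``$\Longleftarrow$'' direction of Theorem \ref{LEMPHIF}(i) and cite it. Everything else is a direct unwinding of definitions.
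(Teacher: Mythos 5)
Your overall route is the same as the paper's: combine Lemma \ref{LEMCHARSUB_ct} with Theorem \ref{LEMPHIF}(ii) to reduce the left-hand side to ``$S^-\subseteq T^-$ and $\G\sim\F$'', dispose of the right-to-left direction by unwinding the definitions of $\approx$ and of causal subteam, and for left-to-right take $R=(S^-,\F)$. The paper's proof is exactly this, and it does not even comment on whether $(S^-,\F)$ is a legitimate causal team, so your instinct to verify that the assignments in $S^-$ are compatible with $\F$ is a genuine improvement in care.

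However, the justification you give for that compatibility is wrong. You claim that compatibility with $\G$ transfers to compatibility with $\F$ whenever $\G\sim\F$, and in particular that ``for the remaining endogenous-constant variables of $\F$ the defining value is, by $\sim$, compatible with whatever $s$ assigns.'' It is not: Definition \ref{DEFSIMFG} places no constraint on the functions $\F_V$ for $V\in\Con(\F)$ and does not relate their constant values to anything in $\G$. Concretely, with $\Dom=\{V\}$ and $\Ran(V)=\{0,1\}$, take $\G$ with $\End(\G)=\emptyset$ and $\F$ with $V$ endogenous, parentless, and $\F_V$ constantly $0$; then $\End(\F)\setminus\Con(\F)=\emptyset=\End(\G)\setminus\Con(\G)$, so $\F\sim\G$, yet the assignment $s(V)=1$ is compatible with $\G$ and not with $\F$. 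So compatibility does not transfer under $\sim$ in general. The repair is immediate and does not use $\sim$ at all: you have already established $S^-\subseteq T^-$, and every assignment of $T^-$ is compatible with $\F$ because $T=(T^-,\F)$ is a causal team; hence so is every assignment of $S^-$, and $R=(S^-,\F)$ is well defined. With that one substitution your argument is correct and coincides with the paper's.
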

\begin{proof}
By Lemma \ref{LEMCHARSUB_ct} and Theorem \ref{LEMPHIF}, we have that $S\models\Theta^{T}\wedge \Phi^{\F}$ iff $S^-\subseteq T^-$ and $\G\sim \F$. It then suffices to show that the latter is equivalent to $S\approx R\subseteq T$ for some $R$. The right to left direction is clear; conversely, if $S^-\subseteq T^-$ and $\G\sim \F$, then  we can take $R=(S^-,\F)$. 
\end{proof}

Consider the quotient set $\FUN/_{\!\approx}$. For each equivalence class $[\F]\in \FUN/_{\!\approx}$ choose a unique representative $\F_0$. Denote by $\FUNr$ the set of all such representatives.

\begin{lateproof}{Theorem \ref{TEOCHARCOCT}}
It suffices to prove the direction ``$\Longleftarrow$".  For each $\F\in\FUNr$, let $\K^\F := \{(T^-,\G)\in\K \mid \G \sim \F\}$. Clearly $\K = \bigcup_{\F\in \FUNr}\K^\F$. Let $T_{\F}=\bigcup\K^\F$, which is well-defined as in Definition \ref{union_ct_df}. Since $\K$ is closed under causal unions,  $T_{\F}\in\K$. We may assume w.l.o.g. that $T_{\F}=(T_{\F}^-,\F)$.  %\todoy{Not 100\% sure, but the proof can still work without this assumption}\todob{I think so.}
 Let
\begin{center}\(\displaystyle\varphi= \bigvee_{\F \in \FUNr}(\Theta^{T_{\F} }\land \Phi^{\F }).\)
\end{center}

It suffices to show that $\K_\varphi=\K$.
For any $S=(S^-,\G)\in \K$, there exists $\F\in\FUNr$ such that  $S\in \K^{\F}$. Let $R=(S^-,\F)$. Clearly, $S\approx R\subseteq \bigcup\K^\F=T_{\F}$, which by Lemma \ref{COROCHARSUB_ct} implies that $S\models \Theta^{T_{\F} }\land \Phi^{\F }$. Hence, $S\models\varphi$, namely $S\in \K_{\varphi}$.

Conversely, suppose $S=(S^-,\G)\in \K_\varphi$, i.e., $S\models \varphi$. Then for every $\F\in \FUNr$, there is $S_{\F}\subseteq S$ such that $S=\bigcup_{\F\in\FUNr}S_{\F}$ and $S_{\F}\models \Theta^{T_\F }\land \Phi^{\F }$. Thus, by Lemma \ref{COROCHARSUB_ct}, we obtain that $S_{\F}\approx R_{\F}\subseteq T_{\F}$ for some $R_{\F}$. In particular, we have that $S_{\F}=(S_{\F}^-,\G)\sim (T_{\F}^-,\F)=T_{\F}$, which gives $\G\sim\F$. But since no two distinct elements in  $\FUNr$ are $\sim$-similar to each other, and $S_\F \sim S$ for each $\F \in \FUNr$, this can only happen if $S^-_{\F}=\emptyset$ for all $\F\in \FUNr$ except one. Denote this unique element of $\FUNr$ by $\mathcal H$. %\todob{This proof somewhat ``hides'' the collapse of $\vvee$ to $\lor$ (pity!). Remove reference to it in the intro.}\todoy{yes it's essentially the same proof as you wrote. but I don't think the collapse of $\vvee$ to $\lor$ is such a big deal...}
Now, $S=S_{\mathcal H}\approx R_{\mathcal H}\subseteq T_{\F}\in \K$. Hence we conclude that $S\in \K$, as $\K$ is causally closed downward and closed under equivalence.
\end{lateproof}

\subsection{Deduction system}

The logic $\CO$ over (recursive) causal teams was axiomatized in \cite{BarSan2019} by means of a sound and complete Hilbert style deduction system. %(denoted as $Ax_{\CO}(\sigma)$). %for $\CO$ over recursive causal teams was given.
In this section, we present an equivalent system of natural deduction and show it to be sound and complete also over (recursive) generalized causal teams. 

\begin{df}\label{co-system-df}
The system of natural deduction for $\CO$ consists of the following rules: %\todoy{not sure whether we need the borders in the tables below, what do you think? With the borders it feels a little more well-organized, but maybe just a little bit}\todob{Not sure about the borders, but surely the tables (especially that for cf) look a bit chaotic, and they go out of margins. Maybe use smaller characters? }

\begin{itemize}
\item (Parameterized) rules for value range assumptions:
%\vspace{-4pt}
\begin{center}
{\normalfont
\renewcommand{\arraystretch}{1.8}
\def\ScoreOverhang{0.5pt}
\def\defaultHypSeparation{\hskip .1in}
\begin{tabular}{|C{0.45\linewidth}C{0.45\linewidth}|}
\hline
 \AxiomC{} \noLine\UnaryInfC{} \RightLabel{$\textsf{ValDef}$}\UnaryInfC{$\bigvee_{x\in \Ran(X)} X=x$} \noLine\UnaryInfC{}\DisplayProof & \AxiomC{}\noLine\UnaryInfC{$X=x$} \RightLabel{$\textsf{ValUnq}$}\UnaryInfC{$X\neq x'$} \noLine\UnaryInfC{}\DisplayProof\\\hline
 \end{tabular}
 }
\end{center}

\item Rules for $\wedge,\vee,\neg$:

%\vspace{-4pt}
\begin{center}
{\normalfont
\renewcommand{\arraystretch}{1.8}
\begin{tabular}{|C{0.45\linewidth}C{0.45\linewidth}|}
\hline
 \AxiomC{$\varphi$}\AxiomC{$\psi$} \RightLabel{$\wedge\textsf{I}$}\BinaryInfC{$\varphi\wedge\psi$} \DisplayProof&
 \AxiomC{$\varphi\wedge\psi$} \RightLabel{$\wedge\textsf{E}$}\UnaryInfC{$\varphi$} \DisplayProof~ \AxiomC{$\varphi\wedge\psi$} \RightLabel{$\wedge\textsf{E}$}\UnaryInfC{$\psi$} \DisplayProof\\%\hline
 \multirow{2}{*}{\AxiomC{$\varphi$} \RightLabel{$\vee\textsf{I}$}\UnaryInfC{$\varphi\vee\psi$}\DisplayProof~\AxiomC{$\varphi$} \RightLabel{$\vee\textsf{I}$}\UnaryInfC{$\psi\vee\varphi$}\DisplayProof}&\AxiomC{$\varphi\vee\psi$}\AxiomC{$[\varphi]$}\noLine\UnaryInfC{$\vdots$}\noLine\UnaryInfC{$\alpha$} \AxiomC{}\noLine\UnaryInfC{$[\psi]$}\noLine\UnaryInfC{$\vdots$}\noLine\UnaryInfC{$\alpha$}\RightLabel{$\vee\textsf{E}$} \TrinaryInfC{$\alpha$}\noLine\UnaryInfC{}\DisplayProof\\%\hline
 \multicolumn{2}{|c|}{\AxiomC{$[\alpha]$}\noLine\UnaryInfC{$\vdots$}\noLine\UnaryInfC{$\bot$} \RightLabel{$\neg\textsf{I}$}\UnaryInfC{$\neg\alpha$} \noLine\UnaryInfC{}\DisplayProof
%\quad\quad \AxiomC{$\bot$}\RightLabel{$\mathsf{ex~ falso}$\todo{derivable}}\UnaryInfC{$\varphi$} \DisplayProof
\quad\quad\AxiomC{$\alpha$}\AxiomC{$\neg\alpha$}\RightLabel{$\neg\textsf{E}$}\BinaryInfC{$\varphi$} \DisplayProof
 %\quad\quad\AxiomC{$\neg\neg\alpha$}\RightLabel{\nnege\todo{not needed}}\UnaryInfC{$\alpha$} \DisplayProof
\quad\quad \AxiomC{$[\neg\alpha]$}\noLine\UnaryInfC{$\vdots$}\noLine\UnaryInfC{$\bot$}\RightLabel{\textsf{RAA}}\UnaryInfC{$\alpha$} \noLine\UnaryInfC{}\DisplayProof}\\\hline
\end{tabular}
}
\end{center}

\item Rules for $\cf$: %\todoy{need some sort of aesthetic design for the table, will do later}
%\label{tab:quantifiers_rule}
%\vspace{-4pt}
%\end{table}
%\begin{table}[H]
%\caption{Rules for $\cf$}\vspace{-8pt}
{\normalfont
\begin{center}
\renewcommand{\arraystretch}{1.8}
\def\ScoreOverhang{0.5pt}
\def\defaultHypSeparation{\hskip .1in}
\begin{tabular}{|L{0.455\linewidth}L{0.455\linewidth}@{}|}
\hline\multicolumn{2}{|@{}l@{}|}{}\\[-18pt]
 \multicolumn{2}{|@{}l@{}|}{\!\AxiomC{} \RightLabel{$\cf\!\textsf{Eff}$}\UnaryInfC{$(\SET{X}=\SET{x} \land Y=y) \cf Y=y$} \DisplayProof
 \!\!\!\!\AxiomC{$\SET{X}=\SET{x} \cf W=w \hspace{8pt} \SET{X}=\SET{x}\cf \gamma$} \RightLabel{$\cf\!\!\textsf{Cmp}${\scriptsize (1)}}\UnaryInfC{$(\SET{X}=\SET{x} \land W=w) \cf \gamma$}\DisplayProof}\\%[-6pt]
%{\footnotesize $x\neq x'$}&\\%\hline
%\AxiomC{$\SET{X}=\SET{x} \cf W=w \hspace{25pt} \SET{X}=\SET{x}\cf Y=y$} \RightLabel{$\cf\!\!\textsf{Comp}$}\UnaryInfC{$(\SET{X}=\SET{x} \land W=w) \cf Y=y$}\noLine\UnaryInfC{}\DisplayProof&\\
 \multicolumn{2}{|l@{}|}{ ~~~\AxiomC{$\SET{X}=\SET{x}\cf \bot$}\RightLabel{$\cf\!\bot\textsf{E}$}\UnaryInfC{$\varphi$}\DisplayProof\hfill \AxiomC{}\noLine\UnaryInfC{}\RightLabel{$\bot\!\cf\!\textsf{E}${\scriptsize(2)}}\UnaryInfC{$(\SET Y=\SET y\land X=x\land X=x')\cf \varphi$}\DisplayProof}
\\%\hline
\AxiomC{}\noLine\UnaryInfC{$X=x\wedge X=x\wedge\SET{Y}=\SET{y} \cf \varphi$}\RightLabel{$\cf\!\!\textsf{Ctr}$}\UnaryInfC{$X=x\wedge\SET{Y}=\SET{y} \cf \varphi$}\noLine\UnaryInfC{}\DisplayProof&\multirow{2}{*}{ \AxiomC{$\SET{X}=\SET{x} \cf \varphi$}\AxiomC{$[\varphi]$}\noLine\UnaryInfC{$\vdots$}\noLine\UnaryInfC{$\psi$} \RightLabel{$\cf\!\!\textsf{Sub}$}\BinaryInfC{$\SET{X}=\SET{x} \cf \psi$}\noLine\UnaryInfC{}\noLine\UnaryInfC{}
\DisplayProof}\\
 \AxiomC{$X=x\wedge\SET{Y}=\SET{y} \cf \varphi$}\RightLabel{$\cf\!\!\textsf{Wk}$}\UnaryInfC{$X=x\wedge X=x\wedge\SET{Y}=\SET{y} \cf \varphi$}\DisplayProof&
 % \\%\hline
 %  \multicolumn{2}{c}{\AxiomC{$\neg(\SET{X}=\SET{x}\cf \alpha)$}\RightLabel{$\neg\cf\!\textsf{Intro}$}\UnaryInfC{$\SET{X}=\SET{x}\cf \neg\alpha$} \DisplayProof}
\\
 \multicolumn{2}{|@{}l@{}|}{\!\!\!\AxiomC{}\noLine\UnaryInfC{$\SET{X}=\SET{x}\cf \varphi$} \AxiomC{}\noLine\UnaryInfC{$\SET{X}=\SET{x}\cf \psi$} \RightLabel{{\small$\cf\!\!\wedge\textsf{I}$}} \BinaryInfC{$\SET{X}=\SET{x}\cf \varphi\land \psi$} \DisplayProof\hfill
 \AxiomC{}\noLine\UnaryInfC{$\SET{X}=\SET{x}\cf \varphi\vee \psi$}\RightLabel{{$\cf\!\!\vee\textsf{Dst}$}}\doubleLine\UnaryInfC{$(\SET{X}=\SET{x}\cf \varphi)\vee(\SET{X}=\SET{x}\cf \psi)$} \DisplayProof}\\%\hline
\AxiomC{}\noLine\UnaryInfC{$\SET{X}=\SET{x} \cf (\SET{Y}=\SET{y} \cf \varphi)$}\RightLabel{{\small$\cf\!\textsf{Extr}$}\,{\scriptsize (3)}}\UnaryInfC{$(\SET{X'}=\SET{x'} \land \SET{Y}=\SET{y}) \cf \varphi$}\DisplayProof
& \hfill\AxiomC{}\noLine\UnaryInfC{$(\SET{X}=\SET{x} \land \SET{Y}=\SET{y}) \cf \varphi$}\RightLabel{$\cf\!\textsf{Exp}$\,{\scriptsize (4)}}\UnaryInfC{$\SET{X}=\SET{x} \cf (\SET{Y}=\SET{y} \cf \varphi)$} \DisplayProof\\
\multicolumn{2}{|l@{}|}{~~\AxiomC{}\noLine\UnaryInfC{$\neg(\SET{X}=\SET{x}\cf \alpha)$}\RightLabel{$\neg\!\cf\!\textsf{E}$}\UnaryInfC{$\SET{X}=\SET{x}\cf \neg\alpha$} \DisplayProof\hfill\AxiomC{}\noLine\UnaryInfC{$X_1 \leadsto X_2  \hspace{10pt} \dots \hspace{10pt} \dots  \hspace{10pt} X_{k-1} \leadsto X_k$}\RightLabel{$\textsf{Recur}$\,{\scriptsize (5)}}\UnaryInfC{$\neg(X_k \leadsto X_1)$} \DisplayProof}\\%[-2pt]
%      \end{tabular}
%\renewcommand{\arraystretch}{1.8}
%\def\ScoreOverhang{0.5pt}
%\def\defaultHypSeparation{\hskip .1in}
%\begin{tabular}{|@{}C{0.48\linewidth}C{0.48\linewidth}@{}|}
\multicolumn{2}{|l@{}|}{{\footnotesize (1) $\gamma$ is $\cf$-free. (2)  $x\neq x'$.  (3) $\SET X=\SET x$ is consistent, $\SET{X'} = \SET X \setminus \SET Y$, $\SET{x'} = \SET x \setminus \SET y$.  (4)  $\SET{X}\cap \SET Y = \emptyset$.  }}\\[-8pt]
\multicolumn{2}{|l@{}|}{{\footnotesize (5) $X_i\neq X_j$ ($i\neq j$), and $X\leadsto Y$ (meaning ``$X$ causally affects $Y$'') is defined as: \,}}\\[-6pt]
\multicolumn{2}{|l@{}|}{{\footnotesize$\displaystyle
X\leadsto Y :=\bigvee \Big\{\SET Z = \SET z \cf \big((X=x \cf Y=y) \land (X=x' \cf Y=y')\big)$}}\\[-10pt]
\multicolumn{2}{|l@{}|}{{\footnotesize\hfill$\mid \SET Z\subseteq \Dom\setminus\{X,Y\},\SET z\in \Ran(\SET Z), x,x'\in \Ran(X), y,y'\in \Ran(Y),x\neq x',y\neq y'\Big\}$.\quad\quad}}\\
\hline
\end{tabular}
\end{center}
}
% \noindent where $X\leadsto Y$ (meaning ``$X$ causally affects $Y$'') is defined as%\begin{align*}
% \begin{center}$\begin{array}{rl}
% X\leadsto Y &=\bigvee \Big\{\SET Z = \SET z \cf \big((X=x \cf Y=y) \land (X=x' \cf Y=y')\big)\\
% &\mid \SET Z\subseteq \Dom\setminus\{X,Y\},\SET z\in \Ran(\SET Z), x,x'\in \Ran(\SET X), y,y'\in \Ran(Y),x\neq x',y\neq y'\Big\}
% \end{array}$\end{center}
%\end{align*}
\end{itemize}

\end{df}

Note that the above system is parametrized with the signature $\sigma$, and the rules with double horizontal lines are invertible. %\todoy{$\leftarrow$I added this sentence} 
We write $\Gamma\ded\varphi$ (or simply $\Gamma\vdash\varphi$ when $\sigma$ is clear from the context) if the formula $\varphi$ can be derived from $\Gamma$ by applying the rules in  the above system. %\todoy{introduce $\vdash^c$ and $\vdash^g$}\todob{Later.}\todoy{do not need this any more now}%\todoy{I think there is no need to write $\ded$ with the subscript $\sigma$ everywhere. The $\sigma$ is mentioned usually already in the assumption}

It is easy to verify that all rules in our system are sound for recursive  (generalized) causal teams. 
%
%Most rules in our system of natural deduction correspond to sound axioms of the Hilbert style system \cite{BarSan2019} for $\CO$ over recursive causal teams. The other rules \todoy{specify which ones} are also easily verified to be sound for recursive causal teams. By  Lemma \ref{gct_sem_cons_2_ct}(iii), the above sytem is also sound for $\CO$ over generalized causal teams.
%
%\begin{teo}[Soundness]
%Let $\Delta\cup\{\alpha\}$ be a set of $\CO$-formulas. Then 
%\[\Delta\vdash\alpha\Longrightarrow\Delta\models^{g}\alpha.\]
%\end{teo}
%\begin{proof}
%%Suppose $T\models\Delta$. By flatness we have $\{s\}\models\Delta$ for all $s\in T^-$.
% By a similar argument to that in \cite{BarSan2019} it is easy to show that all rules in Tables \ref{tab:cpl_rule} and \ref{tab:co_rule} are sound for recursive causal teams with singleton team component $\{s\}$. Then the soundness follows from Lemma \ref{gct_sem_cons_2_ct}(iii).
%\end{proof}
%
%Note that the soundness theorem for our system holds only for recursive (generalized) causal teams.
%
The axioms and rules in the Hilbert system of \cite{BarSan2019} are either included or derivable in our natural deduction system, as shown in the next proposition% Proposition A of the  Appendix. % for the derivable axioms/rules and their derivations. %derived in the next proposition:%.  The remaining Hilbert axioms are derived in the next proposition.
%We derive the remaining axioms of $Ax_{\CO}(\sigma)$ in the next proposition. %It will then follow that our system of natural deduction extends the Hilbert system $Ax_{\CO}(\sigma)$. %If will follow from the completeness  shall then abuse notations and  use $Ax_{\CO}(\sigma)$ to denote both systems. 

\begin{prp}\label{derivable_rules_co}
The following are derivable in the system for $\CO$: %\todoy{label incorrect} %in the natural deduction system of $\CO$:
\begin{enumerate}[(i)]
\item\label{derivable_rules_co_mp}  $\alpha,\neg\alpha\vee\varphi\vdash \varphi$ (weak modus ponens) %\todoy{removed modus tollens}
%\item\label{derivable_rules_co_mt} If $\Gamma, \alpha\vdash\beta$ and $\Gamma \vdash \neg\beta$, then $\Gamma\vdash\neg\alpha$. (weak modus tollens) \todob{I think we can remove this one. It is not probably needed even in [2].} 
\item\label{derivable_rules_co_unq} $\SET{X}=\SET{x}\cf Y=y\vdash \SET{X}=\SET{x}\cf Y\neq y'$ (Uniqueness)
\item\label{derivable_rules_co_extr_conj} $\SET{X}=\SET{x}\cf \varphi\land \psi\vdash \SET{X}=\SET{x}\cf \varphi$ (Extraction)
%\item\label{derivable_rules_co_extr_disj} $\SET{X}=\SET{x}\cf \varphi\vdash\SET{X}=\SET{x}\cf \varphi\vee\psi$
\item\label{derivable_rules_co_extr_neg} $\neg(\SET{X}=\SET{x}\cf \alpha)\dashv\vdash\SET{X}=\SET{x}\cf \neg\alpha$
%\item If $\theta\ded\theta'$ and $\theta'\ded\theta$, then $\psi\cf\varphi[\theta] \ded \psi\cf\varphi[\theta']$. \todoy{to prove}
%\item If $\theta\ded\theta'$ and $\theta'\ded\theta$, then $\psi[\theta]\cf\varphi \ded \psi[\theta']\cf\varphi$.  \todoy{to prove, or not sure if it is true, see comment for $\cf\!\!\textsf{Sub}_l$}
\item\label{derivable_rules_cf_df} $\bigvee_{y\in \Ran(Y)} (\SET{X}=\SET{x}\cf Y=y)$ (Definiteness) %\todoy{I've made the rule $\cf\!\textsf{Def}$ into a derivable clause. It seems that this rule is never applied.}
\end{enumerate}
\end{prp}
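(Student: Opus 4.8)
The plan is to establish the five items one at a time, each by exhibiting a short derivation in the natural deduction system of Definition~\ref{co-system-df}; four of them are essentially immediate applications of the $\cf$-rules, and only the last requires a small idea.

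For \emph{(i)} I would simply split on the premise $\neg\alpha\vee\varphi$ using $\vee\textsf{E}$: in the branch with open assumption $[\neg\alpha]$, the other premise $\alpha$ together with $\neg\alpha$ yields $\varphi$ by $\neg\textsf{E}$ (ex falso); in the branch with open assumption $[\varphi]$ we already have $\varphi$. For \emph{(ii)} and \emph{(iii)} I would use the substitution rule $\cf\textsf{Sub}$, which lets one replace the consequent of a counterfactual by anything derivable from it: in \emph{(ii)}, from $\SET X=\SET x\cf Y=y$ and the subderivation taking $[Y=y]$ to $Y\neq y'$ via $\textsf{ValUnq}$ (legitimate since $y\neq y'$, by the convention that distinct symbols denote distinct values), $\cf\textsf{Sub}$ yields $\SET X=\SET x\cf Y\neq y'$; in \emph{(iii)}, from $\SET X=\SET x\cf(\varphi\wedge\psi)$ and the subderivation taking $[\varphi\wedge\psi]$ to $\varphi$ by $\wedge\textsf{E}$, $\cf\textsf{Sub}$ yields $\SET X=\SET x\cf\varphi$. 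For \emph{(iv)}, the direction from $\neg(\SET X=\SET x\cf\alpha)$ to $\SET X=\SET x\cf\neg\alpha$ is exactly the rule $\neg\cf\textsf{E}$, so there is nothing to do; for the converse I would argue by $\neg\textsf{I}$: assuming $[\SET X=\SET x\cf\alpha]$, combine it with the hypothesis $\SET X=\SET x\cf\neg\alpha$ via $\cf\wedge\textsf{I}$ to get $\SET X=\SET x\cf(\alpha\wedge\neg\alpha)$, then use $\cf\textsf{Sub}$ with the subderivation taking $[\alpha\wedge\neg\alpha]$ to $\bot$ (two uses of $\wedge\textsf{E}$ then $\neg\textsf{E}$) to obtain $\SET X=\SET x\cf\bot$, and finally $\cf\bot\textsf{E}$ to obtain $\bot$; discharging the assumption gives $\neg(\SET X=\SET x\cf\alpha)$.

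The only item that needs an idea is \emph{(v)}, where one must ``box'' the validity $\textsf{ValDef}$ under a counterfactual, i.e.\ derive $\SET X=\SET x\cf\bigvee_{y\in\Ran(Y)}Y=y$ from no assumptions. Here the plan is: first observe that $\SET X=\SET x\cf X_n=x_n$ (where $X_n=x_n$ is, say, the last conjunct of $\SET X=\SET x$) is an instance of $\cf\textsf{Eff}$; then apply $\cf\textsf{Sub}$ to it, using the subderivation that makes a \emph{vacuous} discharge of $[X_n=x_n]$ and derives $\bigvee_{y\in\Ran(Y)}Y=y$ by $\textsf{ValDef}$. This yields $\SET X=\SET x\cf\bigvee_{y\in\Ran(Y)}Y=y$ with no open assumptions. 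I would then iterate the invertible rule $\cf\vee\textsf{Dst}$ to push the prefix $\SET X=\SET x\cf$ past each of the nested binary disjunctions of $\bigvee_{y}Y=y$, using $\vee\textsf{E}$ to descend into the disjunction at each step, arriving at $\bigvee_{y\in\Ran(Y)}(\SET X=\SET x\cf Y=y)$. (If $\SET X$ is empty the statement degenerates to $\textsf{ValDef}$ itself.)

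I expect the main obstacle to be precisely this ``boxing'' step in \emph{(v)}: the rest is routine, but one has to notice that $\cf\textsf{Eff}$ supplies a ``seed'' counterfactual whose consequent can then be replaced, via $\cf\textsf{Sub}$ and a vacuous discharge, by any provable $\CO$-formula — in effect yielding a derived rule ``if $\vdash\gamma$ then $\vdash\SET X=\SET x\cf\gamma$''. The subsequent distribution over the finitely many disjuncts is just bookkeeping (a routine induction on $|\Ran(Y)|$ using $\cf\vee\textsf{Dst}$ and $\vee\textsf{E}$), not a conceptual difficulty.
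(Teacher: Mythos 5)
Your derivations are correct and coincide rule-for-rule with the paper's own proof: (i) via $\vee\textsf{E}$ and $\neg\textsf{E}$; (ii) and (iii) via $\cf\!\textsf{Sub}$ with $\textsf{ValUnq}$ and $\wedge\textsf{E}$; (iv) via $\neg\!\cf\!\textsf{E}$ in one direction and $\cf\!\wedge\textsf{I}$, $\cf\!\bot\textsf{E}$, $\neg\textsf{I}$ in the other; and (v) via the $\cf\!\textsf{Eff}$ ``seed'' followed by $\cf\!\textsf{Sub}$ and $\cf\!\vee\textsf{Dst}$. Your explicit remarks on the vacuous discharge in $\cf\!\textsf{Sub}$ and on iterating $\cf\!\vee\textsf{Dst}$ over the nested disjunction only spell out details the paper leaves implicit.
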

%\begin{lateproof}{Proposition \ref{derivable_rules_co}}
\begin{proof}
Item (\ref{derivable_rules_co_mp}) follows  from $\neg\textsf{E}$ and $\vee\textsf{E}$. %\todob{Remove ii) ?} Item (\ref{derivable_rules_co_mt}) follows easily from $\neg\textsf{I}$. 
Items (\ref{derivable_rules_co_unq}),(\ref{derivable_rules_co_extr_conj}) follow from $\textsf{ValUnq}$,  $\wedge\textsf{E}$ and $\cf\!\textsf{Sub}$. 
For item (\ref{derivable_rules_co_extr_neg}), the left to right direction follows from $\neg\!\cf\!\textsf{E}$. For the other direction, we first derive by applying $\cf\!\wedge\textsf{I}$, and $\cf\!\bot\textsf{E}$  that
\begin{center}$
\SET{X}=\SET{x}\cf \neg\alpha, \SET{X}=\SET{x}\cf \alpha\vdash \SET{X}=\SET{x}\cf \neg\alpha\wedge\alpha\vdash\SET{X}=\SET{x}\cf\bot  %\tag{$\cf\!\wedge\textsf{Intro}$}\\
%&
\vdash \bot
%\tag{$\cf\!\bot\textsf{E}$}
$\end{center}
Then, by $\neg\textsf{I}$ we conclude that $\SET{X}=\SET{x}\cf \neg\alpha\vdash \neg(\SET{X}=\SET{x}\cf \alpha)$.

For item (\ref{derivable_rules_cf_df}), we first derive by $\cf\!\textsf{Eff}$ that $\vdash \SET{X}=\SET{x}\cf X=x$, where  $X=x$ is an arbitrary equation from $\SET{X}=\SET{x}$. By \textsf{ValDef} we also have that $\vdash\bigvee_{y\in \Ran(Y)}Y=y$. Thus, we conclude by applying $\cf\!\!\textsf{Sub}$ that $\vdash\SET{X}=\SET{x}\cf \bigvee_{y\in \Ran(Y)}Y=y$, which then implies that $\vdash\bigvee_{y\in \Ran(Y)} (\SET{X}=\SET{x}\cf Y=y)$ by $\cf\!\!\vee\textsf{Dst}$.
\end{proof}

\begin{teo}[Completeness]\label{completeness_co}
Let $\Delta\cup\{\alpha\}$ be a set of $\CO$-formulas. Then 
\(\Delta\vdash\alpha\iff\Delta\models^{c/g}\alpha.\)
\end{teo}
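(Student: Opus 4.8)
The plan is as follows. The direction $\Longrightarrow$ is soundness, which is routine and has already been observed to hold over both recursive causal teams and recursive generalized causal teams; so the work lies in the converse. For $\Longleftarrow$ I would first remove the ambiguity in ``$\models^{c/g}$'': by Corollary~\ref{gct_sem_cons_2_ct} the entailment relations $\models^{c}$ and $\models^{g}$ agree on sets of $\CO$-formulas, so it is enough to prove $\Delta\models^{c}\alpha\Rightarrow\Delta\vdash\alpha$.

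The main step is to piggyback on the completeness of the Hilbert-style calculus for $\CO$ over causal teams established in \cite{BarSan2019}. Proposition~\ref{derivable_rules_co}, together with the remark preceding it, records that every axiom of that calculus is a theorem of the present natural deduction system and that every rule of it is either already present in, or derivable in, the present system; hence everything derivable in the Hilbert calculus is derivable here. So $\Delta\models^{c}\alpha$ gives, by the completeness theorem of \cite{BarSan2019}, a Hilbert derivation of $\alpha$ from $\Delta$, which is translated rule by rule into a derivation witnessing $\Delta\vdash\alpha$. For infinite $\Delta$ this argument is applied first to a finite $\Delta_0\subseteq\Delta$ with $\Delta_0\models^{c}\alpha$; such a $\Delta_0$ exists because $\sigma$ is finite, so there are only finitely many causal teams over $\sigma$, and therefore the decreasing family $\{\{T\mid T\models\delta\}\mid\delta\in\Delta\}$ already stabilises on a finite subfamily, whose intersection determines the same consequences as $\Delta$.

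I expect the only laborious part to be this translation --- checking that \emph{every} rule of the calculus of \cite{BarSan2019}, not merely the ones isolated in Proposition~\ref{derivable_rules_co}, is available in the natural deduction format; this is exactly what ``equivalent system'' refers to above, and Proposition~\ref{derivable_rules_co} already handles the non-obvious cases. By contrast there is no genuine obstacle on the side of the \emph{generalized} semantics: it is covered entirely by the soundness of the rules for recursive generalized causal teams (for $\Longrightarrow$) and by Corollary~\ref{gct_sem_cons_2_ct} (for $\Longleftarrow$).

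A self-contained alternative would bypass \cite{BarSan2019} and instead use the expressivity results above: since $\mathcal K_\alpha$ is flat and closed under equivalence (Theorems~\ref{TEOGENDW} and \ref{PROPEQUIV}), the construction in the proof of Theorem~\ref{TEOCHARCOCT} shows $\alpha$ to be \emph{semantically} equivalent to a normal form $\bigvee_{\mathcal F\in\FUNr}(\Theta^{T_{\mathcal F}}\wedge\Phi^{\mathcal F})$. The crux would then be to upgrade this to a \emph{derivable} equivalence $\alpha\dashv\vdash\bigvee_{\mathcal F\in\FUNr}(\Theta^{T_{\mathcal F}}\wedge\Phi^{\mathcal F})$, after which $\Delta\models^{c}\alpha$ reduces to a propositional implication between such normal forms from which a derivation can be read off directly. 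Establishing that syntactic normal form --- using the invertible $\cf$-rules, $\cf\!\!\vee\textsf{Dst}$, and distribution of $\vee$ over $\wedge$ --- would be the principal difficulty along this route.
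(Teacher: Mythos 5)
Your proposal is correct and follows essentially the same route as the paper: the paper's proof likewise observes that the natural deduction system derives all axioms and rules of the Hilbert calculus of \cite{BarSan2019} (via Proposition \ref{derivable_rules_co}), imports completeness over causal teams from there, and transfers it to generalized causal teams by Corollary \ref{gct_sem_cons_2_ct}. Your extra remarks on finiteness of $\Delta$ and the alternative normal-form route go beyond what the paper writes but do not change the argument.
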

\begin{proof}
%\todob{This proof just repeats the informal arguments given above...can we omit either the proof or the arguments above?} \todoy{How about the current proof, not significantly shorter though}%Consider first the logic $\CO$ over  causal teams, and suppose $\Delta\models^{c}\alpha$.  By the completeness theorem of the Hilbert system of \cite{BarSan2019}, we know that $\alpha$ is derivable from $\Delta$ in the Hilbert system of \cite{BarSan2019}. This implies immediate that $\Delta\vdash\alpha$ in our system, as all axioms and rules of \cite{BarSan2019} are derivable in our system.
Since our system derives all axioms and rules of the Hilbert system of \cite{BarSan2019}, the completeness of our system over causal teams follows from that of \cite{BarSan2019}.
The completeness of the system over generalized causal teams follows from the fact that $\Delta\models^{c}\alpha$ iff $\Delta\models^{g}\alpha$, given by Corollary \ref{gct_sem_cons_2_ct}.
\end{proof}

%By the completeness theorem, our system of natural deduction is then equivalent to the Hilbert system introduced in \cite{BarSan2019}. We will now abuse notations and denote both systems as $Ax_{\CO}(\sigma)$.

%%%%%%%%%%%%%%%%%%
\section{Extensions of $\Co$} \label{SECCOMPLETE}

\subsection{Expressive power of $\COv$ and $\COd$}

In this section, we characterize the expressive power of $\COv$ and $\COd$ over causal teams. We show that both logics characterize all nonempty causally downward closed   team properties up to causal equivalence, and the two logics are thus expressively equivalent. %To be precise, the main result of this section is the following characterization theorem.
An analogous result can be obtained for generalized causal teams, but we omit it due to space limitations.

%$\COd$ is expressively equivalent to $\COv$, in the sense that the characterization properties given in Theorem \ref{TEOCHARCDU} for $\COv$ also capture $\COd$. We make essential use of a formula $\Xi^T$ that  resembles in the causal setting a similar formula introduced in \cite{YanVaa2016} in the pure team setting.

\begin{teo}\label{TEOCHARCODGCT}\label{TEOCHARCDU}
Let  $\K$ be a nonempty (finite) class of causal teams over some signature $\sigma$. Then the following are equivalent:
\begin{enumerate}[(i)]
\item $\K$ is  causally downward closed and closed under equivalence. 
\item $\K$ is definable by a $\COV$-formula.
\item $\K$ is definable by a $\COD$-formula.
\end{enumerate}
\end{teo}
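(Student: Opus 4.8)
The plan is to establish the cycle of implications $(i)\Rightarrow(ii)\Rightarrow(iii)\Rightarrow(i)$, reusing as much of the machinery from the $\Co$ case (Theorem \ref{TEOCHARCOCT}) as possible. The implications $(ii)\Rightarrow(i)$ and $(iii)\Rightarrow(i)$ are the easy ``soundness'' directions: by Theorem \ref{TEOGENDW} any $\COV$- or $\COD$-formula is downward closed, and by Theorem \ref{PROPEQUIV} its class of models is closed under equivalence, so $\K_\varphi$ always has the required closure properties. The implication $(ii)\Rightarrow(iii)$ is immediate from the definability of dependence atoms in $\COV$ recorded in equation (1) of the excerpt, together with the translation of $\vvee$: actually the cleaner route is to prove $(iii)\Rightarrow(ii)$ trivially (every $\COD$-formula is a $\COV$-formula via (1)), and instead prove $(i)\Rightarrow(iii)$ directly, getting $(i)\Rightarrow(iii)\Rightarrow(ii)\Rightarrow(i)$. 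So the real content is the single implication $(i)\Rightarrow(iii)$: every nonempty, causally downward closed, equivalence-closed class $\K$ is definable by a $\COD$-formula.

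For $(i)\Rightarrow(iii)$ I would adapt the construction from the proof of Theorem \ref{TEOCHARCOCT}. The idea is that a downward closed class is the union of the ``principal cones'' below its maximal elements. Concretely, for each causal team $S=(S^-,\F)\in\K$ I would write a characteristic $\COD$-formula $\chi_S$ whose models are exactly the causal teams $R$ with $R\approx R'\subseteq S$ for some $R'$ — i.e.\ those equivalent to a causal subteam of $S$. The building blocks are already available: $\Phi^{\F}$ (Theorem \ref{LEMPHIF}) pins down the function component up to $\sim$, and I need a downward-closed analogue of $\Theta^T$ that says ``the team component is contained in $S^-$'' without forcing surjectivity onto any particular subteam. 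The natural candidate is a conjunction of dependence atoms / selective implications, or better the formula
\[
\Theta^S_{\subseteq}\ :=\ \bigwedge_{s\notin S^-}\Big(\bigwedge_{V\in\Dom}V=s(V)\Big)\supset\bot
\]
using the definable selective implication $\alpha\supset\psi=\neg\alpha\lor\psi$; this holds in $R$ iff no assignment of $R^-$ lies outside $S^-$, i.e.\ iff $R^-\subseteq S^-$. Then set $\chi_S:=\Theta^S_{\subseteq}\land\Phi^{\F}$ and, since $\K$ is finite,
\[
\varphi\ :=\ \bigvee_{S\in\K}\chi_S\qquad\text{(ordinary disjunction }\lor\text{, which is available in }\COD\text{)}.
\]
One checks $\K_\varphi\subseteq\K$ using downward closure and closure under equivalence of $\K$ (each disjunct forces membership below some $S\in\K$), and $\K\subseteq\K_\varphi$ because any $S\in\K$ satisfies its own disjunct $\chi_S$ (take $R'=S$). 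I should double-check that the $\lor$-semantics interacts correctly: if $R\models\varphi$ then $R^-$ splits as $\bigcup R_S^-$ with $R_S\models\chi_S$, hence each $R_S$ is equivalent to a subteam of the corresponding $S\in\K$, and by downward closure plus equivalence-closure each $R_S\in\K$; then $R$ is the causal union of these (all with $\sim$-equivalent function components, since $R$ is a single causal team), and closure under causal unions — which holds for downward-closed $\K$? no — so here I must instead argue directly that $R$ itself is equivalent to a subteam of a single $S\in\K$, which works because a causal team has a single function component, forcing all but finitely many splits to have empty team component exactly as in the $\Co$ proof.

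The main obstacle I anticipate is exactly this last point: unlike the flat case, a downward closed class need not be closed under causal unions, so I cannot collapse the disjunction by taking a union of witnesses. The resolution — and the step I would spend the most care on — is the observation, already used in the proof of Theorem \ref{TEOCHARCOCT}, that in any \emph{causal} team $R$ the function component is constant, so in the splitting $R^-=\bigcup_S R_S^-$ witnessing $R\models\bigvee_S\chi_S$, the subteams $R_S$ with nonempty team component all carry (a function component $\sim$-equivalent to) $R$'s function component; combined with the $\Phi^{\F}$ conjuncts this pins down which $S$ can absorb a nonempty $R_S$, and for the generic case one shows $R\approx R'\subseteq S$ for that single $S$, whence $R\in\K$ by the two closure hypotheses. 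I would also need to handle the degenerate case $R^-=\emptyset$ separately (automatic by the empty team property and nonemptiness of $\K$), and verify the promised remark that the same argument, with generalized causal teams and the formula $\bigvvee_{\F\in\FUN}\Phi^{\F}$ / Corollary \ref{LEMCHARCT} in place of the single-component bookkeeping, yields the generalized-team analogue.
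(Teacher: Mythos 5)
There is a genuine gap in the key implication $(i)\Rightarrow(iii)$, and it is exactly at the point you flagged but then waved away. Your building blocks $\Theta^S_{\subseteq}$ and $\Phi^{\F}$ are both $\CO$-formulas (the selective implication $\supset$ is defined via $\neg$ and $\lor$, and $\Phi^{\F}$ is a $\CO$-formula by construction), so your candidate $\varphi=\bigvee_{S\in\K}\chi_S$ is itself a $\CO$-formula. By Theorem \ref{TEOGENDW} it is flat, hence $\K_\varphi$ is closed under causal unions — but a class $\K$ satisfying only (i) need not be. So no formula of this shape can work. The concrete failure is in the $\lor$-splitting: the disjuncts are indexed by the teams $S\in\K$, and distinct members of $\K$ can have identical (or $\sim$-equivalent) function components, so the argument from the proof of Theorem \ref{TEOCHARCOCT} (``all but one split must be empty because $R$ has a single function component and the $\Phi^{\F}$'s are pairwise incompatible'') does not apply. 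For instance, if $\K$ contains $(\{s_1\},\F)$ and $(\{s_2\},\F)$ but not $(\{s_1,s_2\},\F)$, the latter team satisfies your $\varphi$ by splitting one assignment into each disjunct, yet lies outside $\K$.

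The paper takes the complementary route precisely to avoid this: instead of a disjunction of ``cones below members of $\K$,'' it defines $\varphi=\bigwedge_{T\in\mathbb{C}_\sigma\setminus\K}\Xi^T$, a conjunction over the teams \emph{not} in $\K$, where $\Xi^T$ (Lemma \ref{LEMNOTSUB_ct}) expresses ``no team equivalent to $T$ is a causal subteam of me.'' Crucially, $\Xi^T$ contains the cardinality formulas $\chi_k$ built from constancy atoms $\con{V}$, which are genuinely non-flat; this is where the extra expressive power of $\COD$ over $\CO$ enters, and it is unavoidable. To repair your proof you would need to replace the positive disjunction by this negative conjunction (or otherwise inject dependence atoms / $\vvee$ into the disjuncts so that each one controls the whole team rather than just a split of it). Your treatment of the easy implications $(ii),(iii)\Rightarrow(i)$ and $(iii)\Rightarrow(ii)$ matches the paper and is fine.
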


By Theorems \ref{TEOGENDW} and \ref{PROPEQUIV},  for every $\COV$- or $\COD$-formula $\varphi$, the set $\K_\varphi$ is nonempty, causally downward closed and closed under causal equivalence. Thus items (ii) and (iii) of the above theorem imply item (i). %Moreover, %as pointed out in 
%(\ref{dep_atm_df_cov}) TOO REMOTE!
%section \ref{SUBSCT} 
Since dependence atoms $\dep{\SET X}{Y}$ are definable in $\COV$ (see (1)),  item (iii)  implies item (ii). It then suffices to show that item (i) implies item (iii). In this proof, we make essential use of a formula $\Xi^T$ that  resembles, in the causal setting, a similar formula introduced in \cite{YanVaa2016}  in the pure team setting.

Given any causal team $T=(T^-,\G)$ over $\sigma$, let $\overline{T}=(\ASS\setminus T^-,\G)$ and $\G_0\in \FUNr$ be such that $[\G_0]=[\G]$. If $T^-\neq\emptyset$ and $|T^-|=k+1$, define a $\COD$-formula
\begin{center}$\displaystyle\Xi^T:= (\chi_k \lor \Theta^{\overline{T}})\vee\bigvee_{\mathcal F\in \FUNr\setminus \{\G_0\}}\Phi^\F,$\end{center}
% \begin{center}$\displaystyle\Xi^T:= \chi_k \lor \Big(\Theta^{\overline{T}}\wedge\bigvee_{\mathcal F\in \FUNr}\Phi^\F\Big),$\end{center}
where the formula $\chi_k$ is defined inductively as
\begin{center}
$\displaystyle\chi_0=\bot,~~\chi_1= \bigwedge_{V\in \Dom}\con{V},~~\text{and }\chi_{k}=\chi_1\lor\underbrace{\cdots}_{k\text{ times}}\lor \chi_1~ (k>1)$.
\end{center}

\begin{lm}\label{LEMNOTSUB_ct}
Let $S,T$ be two causal teams over some signature $\sigma$ with $T^-\neq\emptyset$. %and $S\sim T$. %, and $wc(T)=k+1$.
Then, %$S\models \Xi^T \iff R^-\nsubseteq  S^-.
$S\models \Xi^T \iff \text{for all } R: T\approx R\text{ implies }R\nsubseteq  S.%T \text{ is not equivalent to a causal subteam of } \clos S.
$
\end{lm}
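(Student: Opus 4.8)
The plan is to reduce both sides of the biconditional to one explicit condition on $S=(S^-,\mathcal H)$ (writing $\mathcal H$ for the function component of $S$), namely: \emph{$\mathcal H\not\sim\G$, or $T^-\not\subseteq S^-$.} Throughout we use $T^-\neq\emptyset$ and $|T^-|=k+1$.

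First I would reformulate the right-hand side. If $R$ is a causal team with $T\approx R$, then $R^-=T^-$ and the function component of $R$ is $\sim$-equivalent to $\G$; if moreover $R\subseteq S$, then $R^-\subseteq S^-$ and $R$ has the same function component as $S$, i.e.\ $\mathcal H$. Conversely, if $\mathcal H\sim\G$ and $T^-\subseteq S^-$, then $R:=(T^-,\mathcal H)$ is a genuine causal team (its assignments lie in $S^-$, hence are compatible with $\mathcal H$) witnessing $T\approx R\subseteq S$. So ``for all $R$, $T\approx R$ implies $R\nsubseteq S$'' is equivalent to ``$\mathcal H\not\sim\G$ or $T^-\not\subseteq S^-$''.

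Next I would compute what $\Xi^T$ expresses. I need three facts about satisfaction in a causal team $(A,\mathcal H)$: (a) $(A,\mathcal H)\models\chi_k$ iff $|A|\le k$, by an immediate induction on $k$ from the clause for $\con{V}$ (ranging over all $V\in\Dom$ it forces $|A|\le 1$) and the clause for $\vee$; (b) $(A,\mathcal H)\models\Theta^{\overline T}$ iff $A\subseteq\ASS\setminus T^-$, by Lemma~\ref{LEMCHARSUB_ct}; (c) since each $\Phi^\F$ is a $\Co$-formula and hence flat (Theorem~\ref{TEOGENDW}), since $(\{s\},\mathcal H)\models\Phi^\F\iff\mathcal H\sim\F$ by Theorem~\ref{LEMPHIF}(ii), and since $\FUNr$ contains exactly one representative per $\sim$-class, we get $(A,\mathcal H)\models\bigvee_{\F\in\FUNr\setminus\{\G_0\}}\Phi^\F$ iff $A=\emptyset$ or $\mathcal H\not\sim\G$. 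Combining (a) and (b) via the $\vee$-clause, $(A,\mathcal H)\models\chi_k\vee\Theta^{\overline T}$ iff $A=A_1\cup A_2$ with $|A_1|\le k$ and $A_2\cap T^-=\emptyset$, equivalently iff $|A\cap T^-|\le k$. Finally, splitting $S^-=A\cup B$ for the outermost $\vee$ — recalling that in a causal-team disjunction both parts keep the function component $\mathcal H$ — I obtain: $S\models\Xi^T$ iff there exist $A,B$ with $S^-=A\cup B$, $|A\cap T^-|\le k$, and ($B=\emptyset$ or $\mathcal H\not\sim\G$).

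Finally I would check that this collapses correctly. If $\mathcal H\not\sim\G$, take $A=\emptyset$, $B=S^-$, so $S\models\Xi^T$. If $\mathcal H\sim\G$, the second disjunct forces $B=\emptyset$, hence $A=S^-$, so $S\models\Xi^T$ iff $|S^-\cap T^-|\le k$; since $|T^-|=k+1$, this holds iff $T^-\not\subseteq S^-$. Thus $S\models\Xi^T$ iff ``$\mathcal H\not\sim\G$ or $T^-\not\subseteq S^-$'', which matches the reformulation of the right-hand side, completing the proof. The only delicate points are keeping track of the fact that splitting a causal team for a disjunction does not alter the function component (which is precisely why the $\bigvee_\F\Phi^\F$-disjunct can absorb nothing when $\mathcal H\sim\G$), and the off-by-one between $k$ and $|T^-|=k+1$; neither poses a real difficulty.
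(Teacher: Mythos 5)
Your proposal is correct and follows essentially the same route as the paper's proof: it rests on the same three ingredients (the cardinality reading of $\chi_k$, Lemma \ref{LEMCHARSUB_ct} applied to $\Theta^{\overline{T}}$, and Theorem \ref{LEMPHIF}(ii) plus flatness for the $\Phi^{\F}$-disjunct), with the same case split on whether $\mathcal H\sim\G$. The only difference is organizational — you compute an explicit truth condition for $\Xi^T$ and match it against a reformulated right-hand side, rather than proving the two implications separately — and the handling of the off-by-one and of the function-component preservation under splitting is exactly as in the paper.
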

\begin{proof}
First, observe that the formula $\chi_k$ characterize the  cardinality of  causal teams $S$, in the sense that 
\begin{center}
\hfill$S\models\chi_k$ ~iff~ $|S^-|\leq k$.\hfill (2)
\end{center}
Indeed, clearly, $S\models\chi_0$ iff $S^-=\emptyset$, $S\models\chi_1$ iff $|S^-|\leq1$, and  for $k>1$, $S\models\chi_k$ iff $S=S_1\cup\dots\cup S_k$ with each $S_i\models\chi_1$ iff %$T=T_1\cup\dots\cup T_k$ with $|T_i^-|\leq 1$ iff 
$|S^-|\leq k$.

% Next, observe also that for any causal team $S_0=(S_0^-,\mathcal H)$ with $S_0^-\neq \emptyset$, we have that
% \begin{center}\hfill$\displaystyle S_0\models \Theta^{\overline{T}}\land\bigvee_{\mathcal F\in \FUNr} \Phi^\F\iff S_0\approx R_0\subseteq \overline{T}$ for some $R_0$\hfill (3)\end{center}
% Indeed, we have that 

% Lemma \ref{COROCHARSUB_ct}

Now we prove the  lemma. Let $S=(S^-,\mathcal H)$. ``$\Longrightarrow$": Suppose $S\models\Xi^T$. If $\mathcal H\not\sim \G$, then $T=(T^-,\G)\approx (T^-,\G')=R$ implies $\G'\neq \mathcal H$, thereby $R\nsubseteq S$. Now, suppose $\mathcal H\sim \G\sim \G_0$. If $S^-=\emptyset$, then since $T^-\neq\emptyset$, the statement holds. If $S^-\neq \emptyset$, then by Lemma \ref{LEMPHIF}(ii), we know that no nonempty subteam of $S$ satisfies $\bigvee_{\mathcal F\in \FUNr\setminus\{\G_0\}}\Phi^\F$.
Thus there exist $S_1,S_2\subseteq S$ such that $S_1^-\cup S_2^- = S^-$, 
%\begin{equation}\label{LEMNOTSUB_eq1}
\begin{center}\hfill$\displaystyle S_1\models \chi_k\text{ and }S_2\models \Theta^{\overline{T}}%\land\bigvee_{\mathcal F\in \FUNr}\Phi^\F
.$\hfill (3)\end{center}
%\end{equation} 
By (2), the first clause of the above implies that $|S_1^-|\leq k$. Since $|T^-|=k+1>k$, this means that $T^-\setminus S_1^-\neq \emptyset$. By Lemma \ref{COROCHARSUB_ct}, it follows from the second clause of (3) and the fact that $S_2\models \Phi^\G$ (given again by Lemma \ref{LEMPHIF}(ii)) that $S_2\approx R_0\subseteq\overline{T}$ for some $R_0$. Thus, $T^-\cap S_2^-=\emptyset$.  Altogether, we conclude that $T^-\nsubseteq S^-$. 
 Thus, for any $R$ such that $R\approx T$, we must have that $R^-=T^-\nsubseteq S^-$, thereby $R\nsubseteq S$.

%Conversely, for the direction  
``$\Longleftarrow$": Suppose $T\approx R$ implies $R\not\subseteq  S$ for all $R$. If $\mathcal H\not\sim \G\sim \G_0$, then by Lemma \ref{LEMPHIF}(ii) we have that $S\models\bigvee_{\mathcal F\in \FUNr\setminus\{\G_0\}}\Phi^\F$, thereby $S\models\Xi^T$, as required. Now, suppose $\mathcal H\sim \G$. The assumption then implies that  $T^-\nsubseteq S^-$. Let  $S_1=(S^-\cap T^-,\mathcal H)$ and $S_2=(S^-\setminus T^-,\mathcal H)$. Clearly, $S^-=S_1^-\cup S_2^-$, and it suffices to show that (3) holds. By definition we have that $S_2^-\subseteq (\overline{T})^-$, which implies the second clause of (3) by Lemma \ref{LEMCHARSUB_ct}.
To prove the first clause of (3), by (2) it suffices to verify that $|S_1^-|\leq k$. Indeed, since $T^-\nsubseteq S^-$, we have that $T^-\supsetneq S^-\cap T^-=S_1^-$. Hence, $|S_1^-|<|T^-|=k+1$, namely, $|S_1^-|\leq k$.
\end{proof}

Now we are in a position to give the proof of our main theorem of the section.

\begin{lateproof}{Theorem \ref{TEOCHARCODGCT}}
%it suffices to prove the claim for definability by a single formula; the extension to the general case is obtained as in the proof of \ref{TEOSET}.
%
%$\Rightarrow$) Suppose $\K$ is defined by a $\varphi\in \COd$. Since $\emptyset$ satisfies all $\COd$ formulas (\ref{TEOGENDW}, 2.), $\emptyset \in \K$, so $\K\neq\emptyset$. $\K$ is downward closed by theorem \ref{TEOGENDW}, 1), and similarly $\K$ is closed under equivalence by proposition \ref{PROPEQUIVGCT}.
%It suffices to show that any nonempty finite class $\K$ of generalized causal team over $\sigma$ satisfying the conditions in Theorem \ref{TEOCHARCDU} is definable by some $\COD$-formula.
We prove that item (i) implies item (iii). Let $\K$ be a nonempty finite class of causal teams as described in item (i). Since $\K$ is nonempty and causally downward closed, all causal teams over $\sigma$ with empty team component belong to $\K$. Thus, every causal team $T\in \CT\setminus\K$ has a nonempty team component, where $\CT$ denotes the (finite) set of all causal teams over  $\sigma$. 
Now, define 
\(\displaystyle\varphi=\bigwedge_{T\in \CT \setminus \K}\Xi^T.\)
  %and $\Xi^T_0$ is obtained from $\Xi^T$ by dropping in $\chi_1$ the second conjunct and replacing $wc(T)$ by $|T|$. 
We show that $\K=\K_\varphi$. 

For any $S\notin \K$, i.e., $S\in \CT \setminus \K$, since $S\subseteq S$ and $S^-\neq\emptyset$, by Lemma \ref{LEMNOTSUB_ct} we have that $S\not\models \Xi^S$. Thus $S\not\models\varphi$, i.e., $S\notin \K_\varphi$.
Conversely, suppose $S\in \K$. Take any $T\in \CT \setminus \K$. If $T\approx R\subseteq S$ for some $R$, then since $\K$ is closed under equivalence and causally closed downward, we must conclude that $T\in \K$, which is a contradiction. Thus, by Lemma \ref{LEMNOTSUB_ct}, $S\models \Xi^T$. Hence $S\models\varphi$, i.e., $S\in \K_\varphi$.
\end{lateproof}

\subsection{Axiomatizing $\COv$  over generalized causal teams}

%%%%%%%%
In this section, %we axiomatize the logic $\COV$ over generalized causal teams. W
we introduce a sound and complete system of natural deduction for $\COV$, which extends of the system  for $\CO$, and can also be seen as
a variant of the systems for propositional dependence logics introduced in \cite{YanVaa2016}.

\begin{df}\label{COV_system_df}
The system of natural deduction for $\COV$ %, denoted as $Gen_{\CO}^{\vveescript}(\sigma)$ \todoy{Do we really need a notation for the system?}, 
consists of all rules of the system of $\CO$ (see Definition \ref{co-system-df}) together with the following rules, where note that in the rules $\vee\textsf{E}$, $\neg\textsf{I}$, $\neg\textsf{E}$, $\textsf{RAA}$ and $\neg\!\!\cf\!\textsf{I}$ from Definition \ref{co-system-df} the formula $\alpha$ ranges over $\CO$-formulas only:
\begin{itemize}
\item Additional rules for $\vee$:\\
%\vspace{-8pt}
{\normalfont
%\begin{center}
%\setlength{\tabcolsep}{4pt}
\renewcommand{\arraystretch}{1.8}
\begin{tabular}{|C{0.45\linewidth}C{0.45\linewidth}|}
\hline
 \multirow{2}{*}{\AxiomC{}\noLine\UnaryInfC{$\varphi\vee\psi$} \RightLabel{$\vee\textsf{Com}$}\UnaryInfC{$\psi\vee\varphi$}\DisplayProof\quad\AxiomC{}\noLine\UnaryInfC{$(\varphi\vee\psi)\vee\chi$} \RightLabel{$\vee\textsf{Ass}$}\UnaryInfC{$\varphi\vee(\psi\vee\chi)$}\DisplayProof}&\AxiomC{$\varphi\vee\psi$}\AxiomC{}\noLine\UnaryInfC{[$\varphi$]}\noLine\UnaryInfC{$\vdots$}\noLine\UnaryInfC{$\chi$} \RightLabel{$\vee\textsf{Sub}$} \BinaryInfC{$\chi\vee\psi$}\noLine\UnaryInfC{}\DisplayProof\\\hline
 \end{tabular}
%\end{center}
%}\label{tab:vee_rule_additional}
%\label{tab:quantifiers_rule}
%\end{table}
 %
 %\begin{table}[t]
%\caption{Rules for  $\vvee$}
%\vspace{-8pt}
%{\normalsize
}

\item Rules for $\vvee$:\\
%\begin{center}
{\normalfont
\setlength{\tabcolsep}{4pt}
\renewcommand{\arraystretch}{1.8}
\setlength{\extrarowheight}{1pt}
\begin{tabular}{|C{0.45\linewidth}C{0.48\linewidth}|}
\hline
 \multirow{2}{*}{\AxiomC{$\varphi$} \RightLabel{$\vvee\!\textsf{I}$}\UnaryInfC{$\varphi\vvee\psi$}\DisplayProof~\AxiomC{$\varphi$} \RightLabel{$\vvee\!\textsf{I}$}\UnaryInfC{$\psi\vvee\varphi$}\DisplayProof}&\AxiomC{$\varphi\vvee\psi$}\AxiomC{}\noLine\UnaryInfC{[$\varphi$]}\noLine\UnaryInfC{$\vdots$}\noLine\UnaryInfC{$\chi$} \AxiomC{}\noLine\UnaryInfC{[$\psi$]}\noLine\UnaryInfC{$\vdots$}\noLine\UnaryInfC{$\chi$}\RightLabel{$\vvee\!\textsf{E}$} \TrinaryInfC{$\chi$}\DisplayProof\\%\hline
  \multicolumn{2}{|c|}{\AxiomC{$\varphi\vee(\psi\vvee\chi)$}\RightLabel{$\vee\!\vvee\!\textsf{Dst}$}\UnaryInfC{$(\varphi\vee\psi)\vvee(\varphi\vee\chi)$} \noLine\UnaryInfC{}\DisplayProof
 %\quad\quad\AxiomC{$\neg\neg\alpha$}\RightLabel{\nnege\todo{not needed}}\UnaryInfC{$\alpha$} \DisplayProof
 \AxiomC{$\SET X = \SET x \cf \psi \vvee \chi$}\RightLabel{$\cf\!\!\!\vvee$\!\textsf{Dst}}\UnaryInfC{$(\SET X = \SET x \cf \psi) \vvee (\SET X = \SET x \cf \chi)$} \noLine\UnaryInfC{}\DisplayProof}\\\hline
\end{tabular}
}
%\end{center}
\end{itemize}
\end{df}

The rules in our system are clearly sound. We now proceed to prove the completeness theorem.
An important lemma for the theorem states that every $\COV$-formula $\varphi$ is provably equivalent to the $\vvee$-disjunction of a (finite) set of $\CO$-formulas. Formulas of this type are called \emph{resolutions} of $\varphi$ in \cite{Cia2016b}.

%For the sake of the completeness proof, we borrow (and extend) the notion of \emph{resolution} of a formula, as given in \cite{Cia2016}.

\begin{df}
Let $\varphi$ be a $\COV$-formula. Define the set $\mathcal R(\varphi)$ of its \textbf{resolutions} inductively as follows:
%To each $\COV$ formula $\varphi$ we associate its set $\mathcal R(\varphi)$ of \textbf{resolutions} by the following inductive definition:
%\begin{itemize}
%\item $\mathcal R(X=x) = \{X=x\}$
%\item $\mathcal R(\neg\alpha) = $ \todoy{I've added this case. What should we put here?}
%\item $\mathcal R(X\neq x) = \{X\neq x\}$ \todoy{This case is perhaps not needed now, given the item above}
%\end{itemize}
%\todoy{Ignore the above, now I propose the following definition (because of the issue with negation):}
\begin{itemize}
%\item If $\alpha$ is a $\CO$-formula, then $\mathcal R(\alpha)=\{\alpha\}$.
\item $\mathcal R(X=x) = \{X=x\}$,
\item $\mathcal R(\neg\alpha) = \{\neg\alpha\}$, %\todoy{I've added this case. What should we put here?}
%\item If $\varphi$ is not a $\CO$-formula, then define $\mathcal R(\varphi)$ inductively as
%\begin{itemize}
\item $\mathcal R(\psi\land\chi) = \{\alpha\land\beta \ | \ \alpha \in \mathcal R(\psi), \beta \in \mathcal R(\chi) \}$,
\item $\mathcal R(\psi\lor\chi) = \{\alpha\lor\beta \ | \ \alpha \in \mathcal R(\psi), \beta \in \mathcal R(\chi)\}$,
\item $\mathcal R(\psi\vvee\chi) = \mathcal R(\psi)\cup\mathcal R(\chi)$,
\item $\mathcal R(\SET X=\SET x\cf\varphi) = \{\SET X=\SET x\cf\alpha \ | \ \alpha \in \mathcal R(\varphi)\}$.
%\item $\mathcal R(\psi\supset\chi) =  R(\psi^d\lor\chi) = \{\alpha\lor\beta \ | \ \alpha \in \mathcal R(\psi^d), \beta \in \mathcal R(\chi)\}$. \todoy{perhaps just treat $\supset$ as a defined connective}
\end{itemize}
%\end{itemize}
\end{df}

%Notice that for each $\varphi$, $\mathcal R(\varphi)$ is a set of $\CO$ formulas. A straightforward induction shows that it is a finite set. 
\noindent The set $\mathcal R(\varphi)$ is clearly a finite set of $\CO$-formulas. %Next, we show that every formula is provably  equivalent to the $\vvee$-disjunction of its resolutions.
%The importance of this definition is explained by lemmas \ref{normal_form_derivable} and \ref{normal_form_semantic}.

%\todoy{I've commented out the  lemma about distributive law over $\vvee$, because it follows directly from the rules and Prop. \ref{derivable_rules_covvee}. This lemma is used only in the next lemma.}
%\begin{lm}\label{LEMPRODUCT}
%Let $\circ$ be either $\land$ or $\lor$, and let $\psi,\chi_1,\dots,\chi_n\in\COv$.
%Then 
%$$\psi \circ \bigvvee_{i=1..n} \chi_i \dashv\ded \bigvvee_{i=1..n} (\psi \circ \chi_i).$$
%\end{lm}
%
%\begin{proof}
%From right to left: by repeated applications of rule \emph{Insertion of $\vvee$ into $\circ$}. From left to right, we use the inverse of the rule.
%\end{proof}

\begin{lm} \label{normal_form_derivable}
For any formula $\varphi\in \COV$, we have that 
\(\varphi\dashv\vdash\bigvvee \mathcal R(\varphi).\)
%for the formulas $\alpha_i\in \CO$ as given by \ref{normal_form_semantic}. \todoy{Do NOT need to fix the specific form of each $\alpha_i$...}
\end{lm}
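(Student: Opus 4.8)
\emph{Proof plan.} The plan is to argue by induction on the structure of $\varphi$, after first isolating two syntactic facts about the calculus. The first is a \emph{congruence} fact: provable interderivability $\dashv\vdash$ is respected by $\wedge$, $\vee$ and $\vvee$, and by $\cf$ in its consequent position. Thus if $\psi\dashv\vdash\psi'$ and $\chi\dashv\vdash\chi'$, then $\psi\wedge\chi\dashv\vdash\psi'\wedge\chi'$ (immediate from $\wedge\textsf{I}$ and $\wedge\textsf{E}$), $\psi\vee\chi\dashv\vdash\psi'\vee\chi'$ (from $\vee\textsf{Sub}$, together with $\vee\textsf{Com}$ to obtain congruence in the right-hand disjunct), $\psi\vvee\chi\dashv\vdash\psi'\vvee\chi'$ (from $\vvee\textsf{I}$ and $\vvee\textsf{E}$), and $\SET X=\SET x\cf\psi\dashv\vdash\SET X=\SET x\cf\psi'$ (a direct use of $\cf\!\textsf{Sub}$). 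The rules $\vvee\textsf{I}$ and $\vvee\textsf{E}$ also yield associativity and commutativity of $\vvee$ up to $\dashv\vdash$, so that, since $\mathcal R(\varphi)$ is always finite and nonempty, the formula $\bigvvee\mathcal R(\varphi)$ is well defined up to $\dashv\vdash$ regardless of how its disjuncts are ordered and bracketed.

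The second fact consists of three \emph{distributivity} laws for finite nonempty families of $\CO$-formulas: $(\bigvvee_i\alpha_i)\wedge(\bigvvee_j\beta_j)\dashv\vdash\bigvvee_{i,j}(\alpha_i\wedge\beta_j)$, likewise $(\bigvvee_i\alpha_i)\vee(\bigvvee_j\beta_j)\dashv\vdash\bigvvee_{i,j}(\alpha_i\vee\beta_j)$, and $\SET X=\SET x\cf\bigvvee_i\alpha_i\dashv\vdash\bigvvee_i(\SET X=\SET x\cf\alpha_i)$. Each follows, by induction on the number of $\vvee$-disjuncts (using the congruence fact and associativity/commutativity of $\vvee$), from its one-step instance. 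For the first, the one-step case $(\varphi\vvee\psi)\wedge\chi\dashv\vdash(\varphi\wedge\chi)\vvee(\psi\wedge\chi)$ is derived directly: left to right, extract $\varphi\vvee\psi$ and $\chi$ by $\wedge\textsf{E}$, apply $\vvee\textsf{E}$ to $\varphi\vvee\psi$, and in each branch reassemble the conjunction with $\chi$ and apply $\vvee\textsf{I}$; right to left is symmetric, using $\vvee\textsf{E}$, $\wedge\textsf{E}$, $\wedge\textsf{I}$ and $\vvee\textsf{I}$. For the second, the one-step case is handled (left to right) by the distributivity rule $\vee\!\vvee\!\textsf{Dst}$ and (right to left) by $\vee\textsf{Sub}$ applied within a $\vvee\textsf{E}$, with $\vee\textsf{Com}$ used to also cover the disjunct in which $\vvee$ occurs on the left. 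For the third, the one-step case is the rule $\cf\!\vvee\!\textsf{Dst}$ (left to right) and, conversely, a $\vvee\textsf{E}$ on $(\SET X=\SET x\cf\varphi)\vvee(\SET X=\SET x\cf\psi)$ followed in each branch by $\cf\!\textsf{Sub}$ with the derivation $\varphi\vdash\varphi\vvee\psi$ (respectively $\psi\vdash\varphi\vvee\psi$).

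Given these, the induction is routine. For $\varphi=X=x$ and $\varphi=\neg\alpha$ the set $\mathcal R(\varphi)$ is $\{\varphi\}$, so the claim is trivial. For $\varphi=\psi\wedge\chi$: the induction hypothesis gives $\psi\dashv\vdash\bigvvee\mathcal R(\psi)$ and $\chi\dashv\vdash\bigvvee\mathcal R(\chi)$, hence by congruence $\psi\wedge\chi\dashv\vdash(\bigvvee\mathcal R(\psi))\wedge(\bigvvee\mathcal R(\chi))$, which by the first distributivity law is interderivable with $\bigvvee\{\alpha\wedge\beta\mid\alpha\in\mathcal R(\psi),\ \beta\in\mathcal R(\chi)\}=\bigvvee\mathcal R(\psi\wedge\chi)$. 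The cases $\varphi=\psi\vee\chi$ and $\varphi=\SET X=\SET x\cf\psi$ have the same shape, using the second and third distributivity laws respectively (for $\cf$ one uses that only the consequent is resolved). Finally, for $\varphi=\psi\vvee\chi$ we have $\mathcal R(\psi\vvee\chi)=\mathcal R(\psi)\cup\mathcal R(\chi)$, so $\bigvvee\mathcal R(\psi\vvee\chi)\dashv\vdash(\bigvvee\mathcal R(\psi))\vvee(\bigvvee\mathcal R(\chi))\dashv\vdash\psi\vvee\chi$ by congruence of $\vvee$ and the induction hypothesis.

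The only genuinely delicate point is the two-sided distributivity of $\vee$ over $\vvee$: the calculus supplies only the one-step, one-sided rule $\vee\!\vvee\!\textsf{Dst}$, so one must iterate it carefully, interleaving $\vee\textsf{Com}$ with the associativity and congruence facts for $\vvee$, while keeping track of the index bookkeeping in the nested $\bigvvee$. Everything else reduces to straightforward manipulations of the introduction, elimination and substitution rules.
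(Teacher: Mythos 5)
Your proposal is correct and follows essentially the same route as the paper: induction on the structure of $\varphi$, with the inductive steps handled by the one-step distributivity facts for $\wedge$, $\vee$ and $\cf$ over $\vvee$ (derived from $\wedge\textsf{I}/\wedge\textsf{E}$, $\vee\!\vvee\!\textsf{Dst}$ with $\vee\textsf{Sub}$ and $\vee\textsf{Com}$, and $\cf\!\vvee\!\textsf{Dst}$ with $\cf\!\textsf{Sub}$, respectively) together with $\vvee\textsf{I}$/$\vvee\textsf{E}$. You are merely more explicit than the paper about the congruence properties of $\dashv\vdash$ and the iteration of the one-step laws to the $n$-ary case, which is a matter of exposition rather than substance.
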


\begin{proof}
We prove the lemma by induction on $\varphi$. 
If $\varphi$ is $X=x$ or $\neg\alpha$ for some  $\CO$-formula $\alpha$, then $\mathcal R(\varphi)=\{\varphi\}$, and %$\bigvvee \mathcal R(\varphi)=\varphi$. 
%Thus 
$\varphi\dashv\vdash\bigvvee \mathcal R(\varphi)$ holds trivially. 

% If $\varphi=\,X=x$, then $\mathcal R(\varphi)=\{\varphi\}$, and $\bigvvee \mathcal R(\varphi)=\varphi$. Thus $\varphi\dashv\ded\varphi$ holds trivially. 

%If $\varphi=\neg\alpha$

%If $\varphi$ is not a $\CO$-formula, we now prove the lemma by induction on $\varphi$. 
Now, suppose $\psi\dashv\vdash\bigvvee \mathcal R(\psi)$ and $\chi\dashv\vdash\bigvvee \mathcal R(\chi)$. If $\varphi=\psi\wedge\chi$, observing that $\theta_0\wedge(\theta_1\vvee\theta_2)\dashv\vdash(\theta_0\wedge\theta_1)\vvee(\theta_0\wedge\theta_2)$ (by $\vvee\textsf{E}$,$\vvee\textsf{I}$,$\wedge\textsf{I}$,$\wedge\textsf{E}$), we derive by $\vvee\textsf{I}$,$\vvee\textsf{E}$ that 
%
%by %Proposition \ref{derivable_rules_covvee}(\ref{derivable_rules_covvee_con_distr})
%$\vvee\textsf{I}$, $\vvee\textsf{E}$, $\wedge\textsf{I}$ and $\wedge\textsf{E}$, we have that
\begin{center}\(\psi\wedge\chi\dashv\vdash \big(\bigvvee \mathcal R(\psi)\big)\wedge\big(\bigvvee \mathcal R(\chi)\big)\dashv\vdash \bigvvee\{\alpha\wedge \beta\mid \alpha\in \mathcal R(\psi),\beta\in \mathcal R(\chi)\}\dashv\vdash \bigvvee \mathcal R(\psi\wedge\chi).\)
\end{center}

If $\varphi=\psi\vee\chi$, we have analogous derivations using the fact that $\theta_0\vee(\theta_1\vvee\theta_2)\dashv\vdash(\theta_0\vee\theta_1)\vvee(\theta_0\vee\theta_2)$ (by $\vee\vvee\textsf{Dst}$,$\vvee\textsf{I}$,$\vvee\textsf{E}$ and 
$\vee\textsf{Sub}$) and $\vvee\textsf{I}$, $\vvee\textsf{E}$.
%%\ref{derivable_rules_covvee}(\ref{derivable_rules_covvee_vvee_distr})
%$\vee\vvee\textsf{Dst}$, %$\vvee\textsf{Sub}$, DID YOU MEAN vvee-elimination?
%$\vvee\textsf{E}$, 
%$\vee\textsf{Sub}$, $\vee\textsf{Com}$ and $\vee\textsf{Ass}$.
%, we have that
%\[\psi\vee\chi\dashv\vdash \big(\bigvvee \mathcal R(\psi)\big)\vee\big(\bigvvee \mathcal R(\chi)\big)\dashv\vdash \bigvvee\{\alpha\vee\beta\mid \alpha\in\mathcal R(\psi),~\beta\in \mathcal R(\chi)\}\dashv\vdash \bigvvee \mathcal R(\psi\vee\chi).\]

If $\varphi=\psi\vvee\chi$, then by applying $\vvee\textsf{I}$ and $\vvee\textsf{E}$, we have that
\begin{center}\(\psi\vvee\chi\dashv\vdash \big(\bigvvee \mathcal R(\psi)\big)\vvee\big(\bigvvee \mathcal R(\chi)\big)\dashv\vdash
%\bigvvee\{\alpha\vvee\beta\mid\alpha\in\mathcal R(\psi),~\beta\in \mathcal R(\chi)\}
\bigvvee \big(\mathcal R(\psi) \cup  R(\chi)\big)
\dashv\vdash \bigvvee \mathcal R(\psi\vvee\chi).\)\end{center}

If $\varphi=\SET X=\SET x\cf\psi$, then \allowdisplaybreaks
%\begin{align*}
\begin{center}$\begin{array}{rlcr}
\SET X=\SET x\cf\psi&\dashv\vdash \SET X=\SET x\cf \bigvvee \mathcal R(\psi)&~~&(\cf\!\!\textsf{Sub})%\tag{$\cf\!\!\textsf{Sub}$}
\\
&\dashv\vdash \bigvvee \{\SET X=\SET x\cf \alpha\mid \alpha\in \mathcal R(\psi)\}&& (\cf\!\!\vvee\textsf{Dst}\text{, and }\vvee\textsf{I}, \cf\!\textsf{Sub},\vvee\textsf{E})%Proposition \ref{derivable_rules_covvee}(\ref{derivable_rules_covvee_vvee_cf}
%\tag{$\cf\!\!\vvee$\textsf{Extr}, and $\vvee\textsf{I}$, $\cf\!\textsf{Sub}$%Proposition \ref{derivable_rules_covvee}(\ref{derivable_rules_covvee_vvee_cf})
%}
\\
\end{array}$\end{center}
\begin{center}$\begin{array}{rlcr}
~~&\dashv\vdash \bigvvee \mathcal R(\SET X=\SET x\cf\psi).&&\textcolor{white}{(\cf\!\!\vvee\textsf{Dst}\text{, and }\vvee\textsf{I}, \cf\!\textsf{Sub},\vvee\textsf{E})}
\end{array}$\end{center}
%\end{align*}
\end{proof}

\begin{teo}[Completeness]\label{TEOCOMPLCOU}
Let $\Gamma\cup\{\psi\}$ be a set of $\COV$-formulas. Then 
\(\Gamma\vdash\psi\iff\Gamma\models^{g}\psi.\)
\end{teo}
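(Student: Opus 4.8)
The proof plan is to establish soundness and completeness separately, with completeness being the substantive direction. Soundness follows from the observation, already noted after Definition \ref{COV_system_df}, that every rule of the $\COV$ system is sound over generalized causal teams; by induction on derivations we obtain $\Gamma \vdash \psi \implies \Gamma \models^g \psi$.

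For the completeness direction ($\Gamma \models^g \psi \implies \Gamma \vdash \psi$), the plan is to reduce to the already-established completeness of the $\CO$ system (Theorem \ref{completeness_co}) by means of the resolution normal form (Lemma \ref{normal_form_derivable}) and the disjunction property (Theorem \ref{splitting_prop}). First I would pass to the normal forms: by Lemma \ref{normal_form_derivable}, $\psi \dashv\vdash \bigvvee \mathcal{R}(\psi)$, and since each $\gamma \in \Gamma$ satisfies $\gamma \dashv\vdash \bigvvee \mathcal{R}(\gamma)$, it suffices (using $\vvee\textsf{E}$ repeatedly to do a case analysis on the resolutions of the premises) to prove the claim when $\Gamma$ is a set of $\CO$-formulas. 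So assume $\Gamma \subseteq \CO$ and $\Gamma \models^g \psi$. By soundness of the equivalence $\psi \dashv\vdash \bigvvee\mathcal{R}(\psi)$ we get $\Gamma \models^g \bigvvee \mathcal{R}(\psi)$, i.e. $\Gamma \models^g \bigvvee_{i} \alpha_i$ where $\mathcal{R}(\psi) = \{\alpha_1, \dots, \alpha_n\}$ consists of $\CO$-formulas. Now apply the Disjunction Property (Theorem \ref{splitting_prop}), whose hypothesis requires exactly that the left-hand side $\Gamma$ be a set of $\CO$-formulas: we conclude $\Gamma \models^g \alpha_i$ for some $i$. Since $\alpha_i$ is a $\CO$-formula and $\Gamma \subseteq \CO$, the completeness of the $\CO$ calculus (Theorem \ref{completeness_co}, over generalized causal teams) yields $\Gamma \vdash \alpha_i$ in the $\CO$ system, hence in the $\COV$ system. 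Then $\vvee\textsf{I}$ gives $\Gamma \vdash \bigvvee\mathcal{R}(\psi)$, and Lemma \ref{normal_form_derivable} gives $\Gamma \vdash \psi$.

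For the general case where $\Gamma$ may contain arbitrary $\COV$-formulas, I would make the case-analysis reduction precise: enumerate $\Gamma = \{\gamma_1, \dots, \gamma_m\}$ (it suffices to treat finite $\Gamma$, since derivations and the semantic consequence relation for the finitary fragment only use finitely many premises — here one should note that $\sigma$ is finite so the logic is in effect finitary, or argue by compactness at the level of the $\CO$ system), and for each choice of resolutions $\beta_j \in \mathcal{R}(\gamma_j)$ observe that $\{\beta_1, \dots, \beta_m\} \models^g \psi$ (this follows because any generalized causal team satisfying all the $\beta_j$ satisfies all the $\gamma_j$ by $\vvee\textsf{I}$-soundness, hence satisfies $\psi$). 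By the special case already handled, $\{\beta_1, \dots, \beta_m\} \vdash \psi$. Collecting these derivations under nested applications of $\vvee\textsf{E}$, using $\gamma_j \dashv\vdash \bigvvee\mathcal{R}(\gamma_j)$ from Lemma \ref{normal_form_derivable} to discharge the disjunctions, we obtain $\Gamma \vdash \psi$.

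The main obstacle, and the conceptual heart of the argument, is that the Disjunction Property (Theorem \ref{splitting_prop}) holds \emph{only} over generalized causal teams and \emph{only} when the antecedent set consists of $\CO$-formulas — this is precisely why the detour through generalized causal teams was introduced, and why the reduction to $\CO$-premises via resolutions is essential rather than cosmetic. The routine but slightly delicate bookkeeping is in the iterated $\vvee\textsf{E}$ elimination that handles a whole set of $\COV$-premises simultaneously; one has to be careful that the side formula in each $\vvee\textsf{E}$ application (here $\psi$ itself, or intermediate $\vvee$-disjunctions) is allowed, which it is since $\vvee\textsf{E}$ in Definition \ref{COV_system_df} places no syntactic restriction on its conclusion $\chi$.
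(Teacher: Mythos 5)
Your proposal is correct and follows essentially the same route as the paper: reduce to the resolution normal forms via Lemma \ref{normal_form_derivable}, apply the disjunction property (Theorem \ref{splitting_prop}) to land on a single $\CO$-resolution of $\psi$, invoke the completeness of the $\CO$ calculus (Theorem \ref{completeness_co}), and reassemble with $\vvee\textsf{I}$ and repeated $\vvee\textsf{E}$. The only cosmetic difference is that the paper first collapses $\Gamma$ to a single formula $\varphi$ (using finiteness of the signature) and then case-splits on $\mathcal R(\varphi)$, whereas you keep $\Gamma$ as a finite set and nest the $\vvee\textsf{E}$ applications over the resolutions of each premise; both are sound and the substance is identical.
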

\begin{proof}
We prove the ``$\Longleftarrow$'' direction. Observe that there are only finitely many classes of causal teams of signature $\sigma$. Thus, any set of $\COV$-formulas is equivalent to a single $\COV$-formula, and it then suffices to prove the statement for $\Gamma = \{\varphi\}$.  

Now suppose $\varphi\models\psi$. Then by Lemma \ref{normal_form_derivable} %\todob{Here and elsewhere, Cref was behaving crazily.}\todoy{do not use cref then, I've commented out cleveref package} 
and soundness we have that
%\[\{\bigvvee_{i\in I_j} \alpha_{ij}\mid j\in J\}\models_\sigma \bigvvee_{k\in K} \beta_k,\]
\(%\displaystyle
\bigvvee \mathcal R(\varphi) \models \bigvvee\mathcal R(\psi).
\)
%Let $\gamma\in \mathcal R(\varphi)$. %Notice that $T\models \gamma$ entails $T\models \bigvvee \mathcal R(\varphi)$ and therefore $T\models \bigvvee\mathcal R(\psi)$. 
Thus, for every $\gamma\in \mathcal R(\varphi)$, 
\(%\displaystyle
\gamma \models^g \bigvvee\mathcal R(\psi),
\)
which further implies, 
by %the disjunction property (
Lemma \ref{splitting_prop}, that there is an $\alpha_\gamma\in \mathcal R(\psi)$ such that $\gamma \models \alpha_\gamma$. Since $\gamma,\alpha_\gamma$ are $\CO$-formulas, and the system for $\COV$ extends that for $\CO$, we obtain by  the completeness theorem of $\CO$ (Theorem \ref{completeness_co})  that $\gamma \vdash \alpha_\gamma$. 
%Since each $\mathcal R(\varphi)$ is a set of $\CO$-formulas, it follows from Lemma \ref{infinite_2_finite_entailment_semantic} that for all functions $\gamma:\Gamma\to \bigcup_{\varphi\in \Gamma}\mathcal R(\varphi)$ with $\gamma_\varphi\in\mathcal R(\varphi)$, %there is $k\in K$ such that
%\[\{\gamma_\varphi\mid \varphi\in \Gamma \}\models  \bigvvee\mathcal R(\psi),\]
%which, by Theorem \ref{splitting_prop}, further implies that
%\(\{\gamma_\varphi\mid \varphi\in \Gamma \}\models \psi_\gamma\)
%for some $\psi_\gamma\in\mathcal R(\psi) $.
%Now, since all $\gamma_{\varphi}$ and $\psi_\gamma$ are $\CO$-formulas, and the system for $\COV$ extends that for $\CO$ (which was shown to be complete in Theorem \ref{completeness_co}), we obtain, by further applying  the rule $\vvee\textsf{I}$, that 
Applying $\vvee$\textsf{I} and Lemma \ref{normal_form_derivable}, we obtain
 \(%\displaystyle
%\{\gamma_\varphi\mid \varphi\in \Gamma \}\vdash  \psi_\gamma\vdash\bigvvee \mathcal R(\psi)
\gamma \vdash \bigvvee \mathcal R(\psi) \vdash \psi
\)
for each $\gamma\in \mathcal R(\psi)$. Thus, by Lemma \ref{normal_form_derivable} and repeated applications of $\vvee$\textsf{E}, we conclude that
 \(%\displaystyle
\varphi\vdash\bigvvee \mathcal R(\varphi) \vdash \psi.
\)
\end{proof}%\todob{How do you move the box to the line above?}
%and the conclusion $\varphi\vdash\psi$ follows by lemma \ref{normal_form_derivable}.
%Thus, we conclude by Lemma \ref{infinite_2_finite_entailment_syntactic} that $\Gamma\vdash\bigvvee \mathcal R(\psi)$. Finally, since $\bigvvee \mathcal R(\varphi)\vdash\varphi$ by Lemma \ref{normal_form_derivable} we conclude that $\Gamma\vdash\psi$.

%In this section, we axiomatize $\COD$ over generalized causal teams. Given an occurrence of $\theta$ as a subformula of $\varphi$, we use the slightly ambiguous notation $\varphi[\psi/\theta]$ for the formula obtained by replacing \emph{the specific formula occurrence} $\theta$ in $\varphi$ with $\psi$. 
%The g.c.t.-system for $\COD(\sigma)$, denoted as $Gen_{\COD}(\sigma)$, consists of all rules in the g.c.t.-system for $\CO(\sigma)$ and the rules in Tables \ref{tab:vee_rule_additional} and \ref{tab:cod_rule}.

%%%%%%%%%%%%%%%%%%%%%%%%%%%%%%%%%%%%%%%%%%%%%%%%%%%%%%%%%%
\subsection{Axiomatizing $\COv$  over causal teams}\label{AXCOCT}
%\todoy{I have rearranged the section order.}
The  method for the completeness proof of the previous subsection cannot be used for causal team semantics, as it makes essential use of the disjunction property of $\vvee$, which fails over causal teams. However, since causal teams can be regarded as a special case of generalized causal teams,  all the rules in the system for $\COv$ over generalized causal teams are also sound over causal teams. %In this section, 
We can then axiomatize $\COv$ over causal teams by extending the system of $\COv$ for generalized causal teams with an axiom characterizing the property of being uniform, i.e. ``indistinguishable'' from a causal team. 

%Since now we have a deduction system, $Gen_\CO^{\vvee}(\sigma)$, which is sound and complete over \emph{generalized} causal teams, and the causal teams can be regarded as a special case of generalized causal teams, we may hope to obtain a system which is complete over causal teams if we add to $Gen_\CO^{\vvee}(\sigma)$ some axioms which characterize the property of being a causal team. In this subsection we show that, in some sense,  this can be done.

%We first introduce the notion of two function components being equivalent modulo dummy arguments.

%a notion of equivalence among function components; they will be equivalent, so to speak, if they ``differ only for dummy arguments''.

\begin{df}
The system for $\COV$ over causal teams consists of all rules of $\COV$ over generalized causal teams (Def. \ref{COV_system_df}) plus the following axiom:
{\normalfont
\begin{center}
\begin{tabular}{|C{0.96\linewidth}|}
\hline
\AxiomC{}\noLine\UnaryInfC{} \RightLabel{\textsf{Unf}}\UnaryInfC{$\bigvvee_{\mathcal F\in \FUN} \Phi^{\mathcal F}$}\noLine\UnaryInfC{}\DisplayProof\\\hline
\end{tabular}
\end{center}
}
\end{df}
By Theorem \ref{LEMPHIF}(ii), the axiom \textsf{Unf} is clearly sound over causal teams.

\begin{lm}\label{LEMCTMODELS}
For any set $\Gamma\cup\{\psi\}$ of $\COV$-formulas, 
%\begin{center}
$\displaystyle
\Gamma\models^{c} \psi$ iff  $\displaystyle\Gamma,\bigvvee_{\mathcal F\in \FUN} \Phi^{\mathcal F}\models^{g} \psi. 
$%\end{center}
\end{lm}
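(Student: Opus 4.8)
The plan is to prove both implications by pivoting on Corollary~\ref{LEMCHARCT} (a generalized causal team satisfies $\bigvvee_{\mathcal F\in\FUN}\Phi^{\mathcal F}$ iff it is uniform), the identification of a causal team with its associated constant-function-component generalized causal team (Lemma~\ref{LEMIDENTIFY}), and closure under $\approx$-equivalence (Theorem~\ref{PROPEQUIV}).

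For the ``$\Longleftarrow$'' direction, suppose $\Gamma,\bigvvee_{\mathcal F\in\FUN}\Phi^{\mathcal F}\models^g\psi$ and let $T=(T^-,\mathcal F)$ be a causal team with $T\models^c\Gamma$ (the case $T^-=\emptyset$ is immediate from the empty team property). Pass to $T^g$. By Lemma~\ref{LEMIDENTIFY}(i), $T^g\models^g\gamma$ for each $\gamma\in\Gamma$; and since $T^g$ has a constant function component it is trivially uniform, so $T^g\models^g\bigvvee_{\mathcal F'\in\FUN}\Phi^{\mathcal F'}$ by Corollary~\ref{LEMCHARCT}. The hypothesis gives $T^g\models^g\psi$, and Lemma~\ref{LEMIDENTIFY}(i) yields $T\models^c\psi$.

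For the ``$\Longrightarrow$'' direction, suppose $\Gamma\models^c\psi$ and let $T$ be a generalized causal team with $T\models^g\Gamma$ and $T\models^g\bigvvee_{\mathcal F\in\FUN}\Phi^{\mathcal F}$; we may assume $T\neq\emptyset$. By Corollary~\ref{LEMCHARCT} $T$ is uniform, so fixing some $(t,\mathcal F_0)\in T$ we have $\mathcal G\sim\mathcal F_0$ for every $(s,\mathcal G)\in T$. The key step is to exhibit a single function component $\mathcal H\in\FUN$ with $\mathcal H\sim\mathcal F_0$ such that every $s\in T^-$ is compatible with $\mathcal H$: set $PA^{\mathcal H}_V:=PA^{\mathcal F_0}_V$ and $\mathcal H_V:=(\mathcal F_0)_V$ for each $V\in\End(\mathcal F_0)\setminus\Con(\mathcal F_0)$, and declare every other variable exogenous in $\mathcal H$. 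Then $\mathcal H\sim\mathcal F_0$ by Definition~\ref{DEFSIMFG}, $G_{\mathcal H}$ is a subgraph of the acyclic $G_{\mathcal F_0}$ and hence recursive, and compatibility holds because, for $V\in\End(\mathcal H)=\End(\mathcal F_0)\setminus\Con(\mathcal F_0)$, picking $(s,\mathcal G_s)\in T$ and using $\mathcal G_s\sim\mathcal F_0$ (so $V\in\End(\mathcal G_s)$ and $(\mathcal G_s)_V\sim(\mathcal F_0)_V$) we get $s(V)=(\mathcal G_s)_V(s(PA^{\mathcal G_s}_V))=(\mathcal F_0)_V(s(PA^{\mathcal F_0}_V))=\mathcal H_V(s(PA^{\mathcal H}_V))$, where the middle equality is exactly the defining property of $\sim$ on functions (dummy arguments are irrelevant). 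Hence $T':=\{(s,\mathcal H)\mid s\in T^-\}$ is a legitimate generalized causal team with a unique function component, and $T'\approx T$, since for each $\mathcal F\in\FUN$ both $(T'^{\mathcal F})^-$ and $(T^{\mathcal F})^-$ equal $T^-$ when $\mathcal F\sim\mathcal F_0$ and $\emptyset$ otherwise. Now chain the equivalences: $T\models^g\Gamma$ gives $T'\models^g\Gamma$ by Theorem~\ref{PROPEQUIV}; Lemma~\ref{LEMIDENTIFY}(ii) gives $(T^-,\mathcal H)=T'^c\models^c\Gamma$; the hypothesis $\Gamma\models^c\psi$ gives $(T^-,\mathcal H)\models^c\psi$; Lemma~\ref{LEMIDENTIFY}(ii) gives $T'\models^g\psi$; and Theorem~\ref{PROPEQUIV} finally gives $T\models^g\psi$.

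The only genuinely nontrivial part is the construction of $\mathcal H$ and the verification that the assignments of $T^-$ stay compatible with it. The subtlety is that $\sim$ allows two function components to differ on which variables count as exogenous versus constant-endogenous, as well as on dummy arguments, so one cannot simply reuse $\mathcal F_0$; making all the ``ambiguous'' variables (those exogenous or constant-endogenous according to $\mathcal F_0$) exogenous in $\mathcal H$ resolves this. Everything else is routine bookkeeping with the already-established identifications and equivalences.
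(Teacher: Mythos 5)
Your proof follows the same overall route as the paper's: both directions pivot on Corollary~\ref{LEMCHARCT}, Lemma~\ref{LEMIDENTIFY} and Theorem~\ref{PROPEQUIV}, and your ``$\Longleftarrow$'' direction is essentially verbatim the paper's argument. The one place you diverge is the key step of the ``$\Longrightarrow$'' direction: the paper simply picks $(t,\mathcal F)\in T$ and passes to $S=\{(s,\mathcal F)\mid s\in T^-\}$, tacitly assuming that every $s\in T^-$ is compatible with $\mathcal F$ so that $S\subseteq\SEM$. As you observe, this is not automatic: the relation $\sim$ permits a variable to be constant-endogenous in $\mathcal F$ while being exogenous (or constant with a different value) in the function component of some other member of $T$, in which case $(s,\mathcal F)$ need not lie in $\SEM$. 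Your construction of $\mathcal H$ --- retaining only the non-constant endogenous part of $\mathcal F_0$ and declaring every other variable exogenous --- repairs exactly this point; your compatibility check via the defining property of $\sim$ on functions is correct, as are the verifications that $\mathcal H\sim\mathcal F_0$, that $G_{\mathcal H}$ stays acyclic, and that $T'\approx T$. So your proof is not merely correct: it makes explicit (and fixes) a step that the paper glosses over, at the cost of a slightly longer construction. Everything else matches the paper's argument.
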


\begin{proof}
$\Longleftarrow$: Suppose $T\models^{c} \Gamma$ for some causal team $T$. Consider the generalized causal team $T^{g}$ generated by $T$.  By Lemma \ref{LEMIDENTIFY},  $T^{g}\models^{g}\Gamma$. Since $T^{g}$ is uniform,  Corollary \ref{LEMCHARCT} gives that $T^{g}\models^{g} \bigvvee_{\mathcal F\in \FUN}\Phi^{\mathcal F}$. Then, by assumption, we obtain that $T^{g}\models^{g}\psi$, which, by Lemma \ref{LEMIDENTIFY} again, implies that $T\models^{c}\psi$.

$\Longrightarrow$:   Suppose $T\models^{g}\Gamma$ and $T\models^{g}\bigvvee_{\mathcal F\in \FUN}\Phi^{\mathcal F}$ for some generalized causal team $T$. By Corollary \ref{LEMCHARCT} we know that $T$ is uniform. Pick $(t,\mathcal F)\in T$.  
%
%\todoy{new proof:}
Consider the generalized causal team $S=\{(s,\F)\mid s\in T^-\}$. Observe that $T\approx S$. Thus, by Theorem \ref{PROPEQUIV}, we have that $S\models^{g}\Gamma$, which further implies, by Lemma \ref{LEMIDENTIFY}(ii), that $S^c\models^{c}\Gamma$. Hence, by the assumption we conclude that $S^c\models^c\psi$. Finally, by applying Lemma \ref{LEMIDENTIFY}(ii) and Theorem \ref{PROPEQUIV} again, we obtain $T\models^g\psi$.
%
%
%
%\todoy{old proof:} By Lemma \ref{LEMUNIFORMGCT}, we have that $T^{\mathcal F}\models^{g}\Gamma$ and $T^{\mathcal F}\models^{g}\bigvvee_{\mathcal F\in \FUN}\Phi^{\mathcal F}$. Consider the causal team $T^c=(T^{\mathcal F})_{\mathcal F}$. By Lemma \ref{LEMIDENTIFY}, we obtain that $T^c\models^{c}\Gamma$ and $T^{c}\models^{c}\bigvvee_{\mathcal F\in \FUN}\Phi^{\mathcal F}$. Thus, by assumption, we conclude that $T^c\models^{c}\psi$, which by Lemma \ref{LEMIDENTIFY} again implies that $T^{\mathcal F}\models^{g}\psi$. Finally, we conclude by Lemma \ref{LEMUNIFORMGCT} that $T\models^{g}\psi$.
\end{proof}

\begin{teo}[Completeness]\label{cmpl_thm_covvee}
 %$Gen_\CO^{\vvee}(\sigma)$+\emph{Permanence of constant values}
Let $\Gamma\cup\{\psi\}$ be a set of $\COV$-formulas. Then   
\(\Gamma\models^{c}\psi \iff \Gamma\vdash^{c}\psi.\)
\end{teo}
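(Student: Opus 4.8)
The plan is to reduce this statement to the completeness of $\COV$ over generalized causal teams (Theorem \ref{TEOCOMPLCOU}), which is already in hand, using Lemma \ref{LEMCTMODELS} as the bridge. As usual I would treat the two directions of the biconditional separately.

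For soundness ($\Longleftarrow$), the point is that every rule of the generalized-team system for $\COV$ is sound over causal teams: a causal team $T$ can be identified with the generalized causal team $T^g$ (Lemma \ref{LEMIDENTIFY}), and those rules are sound over arbitrary generalized causal teams; moreover the extra axiom $\textsf{Unf}$ is sound over causal teams by Theorem \ref{LEMPHIF}(ii), since $\models^c\bigvvee_{\mathcal F\in\FUN}\Phi^{\mathcal F}$ holds precisely because every causal team has a constant, hence uniform, function component. A routine induction on derivations then gives $\Gamma\vdash^c\psi\Rightarrow\Gamma\models^c\psi$.

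For completeness ($\Longrightarrow$), assume $\Gamma\models^c\psi$. By Lemma \ref{LEMCTMODELS} this yields $\Gamma,\bigvvee_{\mathcal F\in\FUN}\Phi^{\mathcal F}\models^g\psi$. Applying the completeness of $\COV$ over generalized causal teams (Theorem \ref{TEOCOMPLCOU}, whose proof already accommodates arbitrary sets of premises via the finiteness of the set of team-classes over a fixed signature) we obtain a derivation of $\psi$ from $\Gamma\cup\{\bigvvee_{\mathcal F\in\FUN}\Phi^{\mathcal F}\}$ in the generalized-team system — hence also in the causal-team system for $\COV$, since the latter extends the former. Finally, the axiom $\textsf{Unf}$ derives $\bigvvee_{\mathcal F\in\FUN}\Phi^{\mathcal F}$ from the empty set of assumptions, so splicing an instance of $\textsf{Unf}$ in place of each leaf labelled by that formula produces a derivation of $\psi$ from $\Gamma$ alone, i.e.\ $\Gamma\vdash^c\psi$.

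I do not expect any serious obstacle: once Lemma \ref{LEMCTMODELS} and Theorem \ref{TEOCOMPLCOU} are available the argument is pure bookkeeping. The only points deserving a sentence of care are (a) that a derivation assembled from the generalized-team rules is literally a derivation in the causal-team system, which holds because the latter's rule set is a superset of the former's, and (b) that $\textsf{Unf}$ supplies $\bigvvee_{\mathcal F\in\FUN}\Phi^{\mathcal F}$ with no open assumptions, so substituting it in does not introduce new premises.
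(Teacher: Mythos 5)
Your proposal is correct and follows essentially the same route as the paper: reduce $\Gamma\models^{c}\psi$ to $\Gamma,\bigvvee_{\mathcal F\in\FUN}\Phi^{\mathcal F}\models^{g}\psi$ via Lemma \ref{LEMCTMODELS}, invoke the generalized-team completeness theorem (Theorem \ref{TEOCOMPLCOU}), and discharge the extra premise with the axiom \textsf{Unf}. The soundness direction is likewise handled as in the paper, via the observation that the generalized-team rules remain sound over causal teams and that \textsf{Unf} is sound by Theorem \ref{LEMPHIF}(ii).
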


\begin{proof}
%Soundness is taken care of by theorem \ref{TEOCOMPLCOU} (soundness of $Gen_{\CO}^{\vvee}(\sigma)$) and lemma \ref{LEMSOUNDUF}. Let us prove completeness.
Suppose $\Gamma\models^{c}\psi$. By Lemma \ref{LEMCTMODELS}, we have that $\Gamma,\bigvvee_{\mathcal F\in \FUN} \Phi^{\mathcal F}\models^{g} \psi$, which implies that $\Gamma,\bigvvee_{\mathcal F\in \FUN} \Phi^{\mathcal F}\vdash\psi$, by the completeness theorem (\ref{TEOCOMPLCOU}) of the system for $\COV$ over generalized causal teams.  Thus, $\Gamma\vdash \psi$ by axiom \textsf{Unf}. 
%(and viceversa, any proof of $\Gamma\ded^{ct} \psi$ is also a proof of $\Gamma\cup\{\bigvvee_{\mathcal F\in \FUN} \Phi^{\mathcal F}\}\ded \psi$).
%%%%%%I don't remember why I specified this.
\end{proof}

\subsection{Axiomatizing $\COd$}
%\todoy{I moved these to here for now. Maybe these should be put in one separate section?}
We briefly sketch the analogous axiomatization results for the language $\COd$ over both semantics. %The proofs are omitted due to space limitations.

%\begin{df}
%\begin{itemize}
    %\item
    Over generalized causal teams, the system for $\COD$ consists of all the rules of the system for $\CO$ (Definition \ref{co-system-df}) together with $\vee\textsf{Com}$, $\vee\textsf{Ass}$, $\vee\textsf{Sub}$ (the ``additional rules for $\lor$'' from Definition \ref{COV_system_df}) and   the new rules %$\textsf{DepI}_0$, $\textsf{DepE}_0$, $\textsf{DepI}$ and $\textsf{DepE}$ 
    for dependence atoms defined below: 
    \begin{center}
\normalfont
%\begin{table}[t]
%\setlength{\tabcolsep}{4pt}
%\caption{Additional rules for $\COD$}\label{tab:cod_rule}
\renewcommand{\arraystretch}{1.8}
\begin{tabular}{|C{0.96\linewidth}|}
\hline
\AxiomC{$X=x$} \RightLabel{$\textsf{DepI}_0$}\UnaryInfC{$\depc{X}$}\DisplayProof\AxiomC{}\noLine\UnaryInfC{[$\depc{X_1}$]} \AxiomC{}\noLine\UnaryInfC{$\dots$} \AxiomC{}\noLine\UnaryInfC{[$\depc{X_n}$]}\noLine\TrinaryInfC{}
\branchDeduce
\DeduceC{$\depc{Y}$}\RightLabel{$\mathsf{DepI}$}\UnaryInfC{$\dep{X_1,\dots,X_n}{Y}$}\DisplayProof\\[-6pt]%\hline
%\AxiomC{$\depc{X}$}\AxiomC{}\noLine\UnaryInfC{$\forall x\in Ran(X)$}\noLine\UnaryInfC{[$X=x$]}\noLine\UnaryInfC{$\vdots$}\noLine\UnaryInfC{$\varphi$} \RightLabel{$\textsf{DepE}_0$} \BinaryInfC{$\varphi$}\noLine\UnaryInfC{}\DisplayProof
\AxiomC{$\varphi$}\AxiomC{}\noLine\UnaryInfC{$\forall x\in Ran(X)$}\noLine\UnaryInfC{[$\varphi[X=x/\depc{X}]$]}\noLine\UnaryInfC{$\vdots$}\noLine\UnaryInfC{$\psi$}\RightLabel{$\textsf{Dep}_0\textsf{E}$ {\footnotesize($\ast$)}}\BinaryInfC{$\psi$}\noLine\UnaryInfC{} \DisplayProof~
\AxiomC{$\dep{X_1,\dots,X_n}{Y}$}\AxiomC{$\depc{X_1}\,\dots\, \depc{X_n}$} \RightLabel{$\textsf{DepE}$}\BinaryInfC{$\depc{Y}$}\DisplayProof\\[-10pt]
 \multicolumn{1}{|L{0.96\linewidth}|}{{\footnotesize ($\ast$) $\varphi[X=x/\depc{X}]$ stands for the formula obtained by replacing \emph{a specific occurrence} of $\depc{X}$ in $\varphi$ with $X=x$.}}\\\hline
%\AxiomC{}\noLine\UnaryInfC{$\varphi$}\AxiomC{}\noLine\UnaryInfC{$\forall x\in Ran(X)$}\noLine\UnaryInfC{[$\varphi[X=x/\depc{X}]$]}\noLine\UnaryInfC{$\vdots$}\noLine\UnaryInfC{$\psi$}\RightLabel{$\textsf{Dep}_0\textsf{E}$}\BinaryInfC{$\psi$}\noLine\UnaryInfC{} \DisplayProof
% %\quad\quad\AxiomC{$\neg\neg\alpha$}\RightLabel{\nnege\todo{not needed}}\UnaryInfC{$\alpha$} \DisplayProof
%& \\\hline
\end{tabular}
%\label{tab:quantifiers_rule}
%\end{table}
\end{center}
%\end{laterule}    
    These rules for dependence atoms generalize the corresponding rules in the pure team setting as introduced in \cite{YanVaa2016}. The completeness theorem of the system can be proved by generalizing the corresponding arguments in \cite{YanVaa2016}. %; see a proof sketch in Appendix.   
%    \begin{proofsk}{\COD}
Analogously to the case for \COv, %(Theorem \ref{TEOCOMPLCOU}),
in this proof we use the fact that every formula $\varphi$ is (semantically) equivalent to a formula $\bigvvee_{i\in I}\alpha_i$ in disjunctive normal form, where each $\alpha_i$ %, called an {\em instantiation} of $\varphi$, 
is a $\CO$-formula obtained from $\varphi$ by replacing every dependence atom $\dep{\SET X}{Y}$ by a formula $\bigvee_{\SET x\in \Ran(\SET X)}(\SET X=\SET x\wedge Y=y)$ with $y$ ranging over all of $\Ran(Y)$. The disjunctive formula $\bigvvee_{i\in I}\alpha_i$ is not in the language of $\COd$, but we can prove in the system of $\COd$ (by applying the additional rules in the table above) that $\alpha_i\vdash\varphi$ ($i\in I$), and that 
\begin{center}
$\Gamma,\alpha_i\vdash\psi \text{ for all }i\in I\Longrightarrow \Gamma,\varphi\vdash\psi.$
\end{center}
These mean {\em in effect} that ``$\varphi\dashv\vdash\bigvvee_{i\in I}\alpha_i$''.
The completeness theorem for $\COd$ is then proved using essentially the same strategy as that for $\COv$ (Theorem \ref{TEOCOMPLCOU}).

Over causal teams, using the same method as in the previous section, the complete system for $\COD$ can be defined as an extension of the above generalized causal team system with two additional axioms \textsf{1Fun} and \textsf{NoMix}, defined as follows: %that together are  equivalent to the formula \textsf{Unf} in \COV. %Since \COd~ and \COv~ are expressively equivalent, such axioms exist in \COd. \todoy{rephrase}
%\begin{lateaxm}{for $\COD$ (over causal teams)}\

\begin{center}
\normalfont
\begin{tabular}{|C{0.96\linewidth}|}
\hline
\AxiomC{}\noLine\UnaryInfC{}\noLine\UnaryInfC{} \RightLabel{\textsf{1Fun} {\footnotesize(1)}}\UnaryInfC{$\displaystyle\bigwedge_{V\in \Dom} \Big(\beta_{\mathrm{En}}(V) \supset (\bigwedge_{\SET w\in \SET W_V} \SET W_V = \SET w \cf \con{V} )\Big)$}\DisplayProof\\
\AxiomC{}\noLine\UnaryInfC{}\noLine\UnaryInfC{} \RightLabel{\textsf{NoMix} {\footnotesize(2)}}\UnaryInfC{$\displaystyle\bigwedge_{V\in Dom} \bigwedge\{\Xi_*^{\{a,b\}}\mid (a,b)\in \SEM^2,\,\{a\}\models \beta_{\mathrm{En}}(V), \,\{b\}\not\models \beta_{\mathrm{En}}(V)\}$}\DisplayProof\\
\\
 \multicolumn{1}{|L{0.96\linewidth}|}{{\footnotesize (1) $\SET W_V=\Dom\setminus\{V\}$, and $\displaystyle\beta_{\mathrm{En}}(V):=\bigvee_{X\in\SET W_V} \beta_{\mathrm{DC}}(X,V)$, where each $\beta_{\mathrm{DC}}(X,V)$ is the  $\CO$-formula from \cite{BarSan2019} expressing the property ``$X$ is a direct cause of $V$'':}}\\ 
  \multicolumn{1}{|L{0.96\linewidth}|}{{\footnotesize\quad$\displaystyle
\beta_{\mathrm{DC}}(X,V):=\bigvee\big\{(\SET Z = \SET z \land X=x) \cf V=v,~(\SET Z = \SET z \land X=x') \cf V=v'$}}\\
  \multicolumn{1}{|C{0.96\linewidth}|}{{\footnotesize\quad\quad\quad\quad\quad\quad\quad\quad$\mid x,x'\in \Ran(X),\,v,v'\in \Ran(V),\,\SET Z=\Dom\setminus\{X, V\},\,\SET z\in \Ran(\SET Z),\,x\neq x', v\neq v'\big\}.
$ %\todoy{have space to write it out now}
 }}\\
 [-6pt]\\
  \multicolumn{1}{|L{0.96\linewidth}|}{{\footnotesize (2) $\Xi_*^{\{a,b\}}$ is defined otherwise the same as $\Xi^{\{a,b\}}$ except that $\chi_1$ is redefined as}} \\
  \multicolumn{1}{|C{0.96\linewidth}|}{{\footnotesize $\displaystyle\chi_1:=\bigwedge_{V\in Dom} \big(\con{V}\land \bigwedge_{\SET w \in Ran(\SET W_V)}(\SET W_V = \SET w \cf \con{V})\big)$.}}\\\hline
\end{tabular}
\end{center}

\noindent %Intuitively, 
The axiom \textsf{1Fun} states that the endogenous variables are governed by a unique function; % in the whole generalized causal team in question.
 the axiom \textsf{NoMix} guarantees that all members of the generalized causal team agree on what is the set of endogenous variables. %\footnote{$End_\sigma(V)$ is shorthand for $\bigvee_{X\in\Dom\setminus \{V\}} DC_\sigma(X,V)$, where the $DC_\sigma(X,V)$ are the ``direct cause'' $\Co$ formulas from \cite{BarSan2019}. The formulas $\Xi_*^T$ are defined like $\Xi^T$, by replacing $\chi_1$ with $\bigwedge_{V\in Dom} [\con{V}\land \bigwedge_{\SET w \in Ran(\SET W_V)}(\SET W_V = \SET w \cf \con{V})]$.} 
Together, these two additional axioms characterize the {\em uniformity} of the generalized causal team in question (or they  are equivalent to the formula \textsf{Unf} in \COV), thus allow for a completeness proof along the lines of Section \ref{AXCOCT}.

\section{Conclusion}

We have answered the main questions concerning the expressive power and the existence of deduction calculi for the languages that were proposed in \cite{BarSan2018} and \cite{BarSan2019}, and which involve both (interventionist) counterfactuals and (contingent) dependencies. In the process, we have introduced a generalized causal team semantics, for which we have also provided natural deduction calculi. We point out that our calculi are sound only for \emph{recursive} systems, i.e., when the causal graph is acyclic. The general case (and special cases such as the ``Lewisian'' systems considered in \cite{Zha2013}) will require a separate study. We point out, however, that each of our deduction systems can be adapted to the case of \emph{unique-solution} (possibly generalized) causal teams by replacing the \textsf{Recur} rule with an inference rule that expresses the \emph{Reversibility} axiom from \cite{GalPea1998}.% rule:\footnote{Which is based on the \emph{Reversibility axiom} introduced in \cite{GalPea1998}.}
%$$\Large \frac{(\SET{X}=\SET{x} \land Y=y) \cf W=w \hspace{25pt} (\SET{X}=\SET{x} \land W=w)\cf Y=y}{\SET{X}=\SET{x} \cf Y=y}\normalsize \hspace{5pt} \text{ (for $Y\neq W$) }.$$
%\todob{The claim above should be checked carefully, especially for generalized ct.}

Our work shows that many  methodologies developed in the literature on team semantics can be adapted to the generalized semantics and, to a lesser extent, to causal team semantics. %The key element in this adaptation is the core result that  a system of functions can be fully described (up to dummy arguments) in the basic counterfactual language $\CO$. 
On the other hand, a number of peculiarities emerged that set apart these semantic frameworks from the usual team semantics: for example, the failure of the disjunction property over causal teams. %, and the ``coincidence'' of $\vvee$ and $\lor$ over special classes of formulas (see the proof of Theorem \ref{TEOCHARCOCT}). 
We believe the present work may provide guidelines for the investigation of further notions of dependence and causation in causal team semantics and its variants.

\end{document}